\newcommand{\K}{{\mathbb{K}}}
\newcommand{\dl}{{\displaystyle}}
\newtheorem{df}{Definition}[section]
\newtheorem{thm}{Theorem}[section]
\newtheorem{cor}{Corollary}[section]
\newtheorem{rem}{Remark}[section]
\newtheorem{rems}{Remarks}[section]
\newtheorem{prop}{Proposition}[section]
\newtheorem{exa}{Example}[section]
\newtheorem{lem}{Lemma}[section]
\begin{document}
\date{}
\title{\bf Cohomology and formal deformations of $n$-Hom-Lie color algebras}\author{\bf K. Abdaoui, R. Gharbi,  S. Mabrouk, A. Makhlouf}
\author{{ K. Abdaoui$^{1}$
 \footnote { E-mail: Abdaouielkadri@hotmail.com}
,\  R. Gharbi$^{1,3}$
    \footnote { E-mail:  Rahma.Gharbi@uha.fr}
,\  S. Mabrouk$^{2}$
 \footnote { E-mail: mabrouksami00@yahoo.fr }
\ and A. Makhlouf$^{3}$
 \footnote { E-mail: abdenacer.makhlouf@uha.fr $($Corresponding author$)$}
}\\
{\small 1.  University of Sfax, Faculty of Sciences Sfax,  BP
1171, 3038 Sfax, Tunisia} \\
{\small 2.  University of Gafsa, Faculty of Sciences Gafsa, 2112 Gafsa, Tunisia}\\
{\small 3.~ Universit\'e de Haute Alsace, IRIMAS - Département de Mathématiques,}\\ {\small  18, rue des frères Lumière,
F-68093 Mulhouse, France}}
\date{}

 \maketitle{}
\numberwithin{equation}{section}
 \begin{abstract}
The aim of this paper is to provide a cohomology of $n$-Hom-Lie color algebras governing one parameter formal  deformations. Then,  we study formal deformations of a $n$-Hom-Lie color algebra and introduce the notion of  Nijenhuis operator on an $n$-Hom-Lie color algebra, which could give rise to infinitesimally  trivial $(n-1)$-order deformations. Furthermore, in connection with Nijenhuis operators we introduce and discuss  the notion of a product  structure on  $n$-Hom-Lie color algebras.
\end{abstract}
\noindent\textbf{Keywords}: $n$-Hom-Lie color algebra,  cohomology, deformation,  Nijenhuis operator, almost structure,  product  structure.  \\
\noindent\textbf{2020 MSC}:  17D30, 17B61, 17A40, 17B56, 53C15.
\maketitle
\tableofcontents
\section{Intoduction}~~

The generalization of Lie algebra, which is now known as
Lie color algebra is introduced by Ree \cite{Ree}. This class  includes Lie superalgebras which are $\mathbb{Z}$-graded and play an important role in supersymmetries. More generally, Lie color algebra plays an important role in theoretical physics, as
explained in \cite{SuZhao1, SuZhao2}. Montgomery \cite{Montgomery} proved that Simple Lie color algebra can be obtained
from associative graded algebra, while the Ado theorem and the PBW theorem of Lie
color algebra were proven by Scheunert \cite{scheunert1979generalized}. In the last two decades, Lie color algebras have
developed as an interesting topic in Mathematics and Physics $($see \cite{ArmakanSilvestrovFarhangdoost, BeitesKaygorodovPopov, BergenPassman, Feldvoss, KaygorodovPopov, Wilson} for more details$)$.

Ternary Lie algebras and more generally $n$-ary Lie algebras are natural generalization of binary Lie algebras, where one considers $n$-ary operations and a generalization of Jacobi condition. The most common generalization consists of expressing the adjoint map as a derivation. The corresponding algebras were called $n$-Lie algebras  and were first introduced and studied by Filippov in \cite{ChapBaSilnhomliecolor:Filippov:nLie}  and then completed  by Kasymov in  \cite{ChapBaSilnhomliecolor:Kasymov:nLie}. These algebras, in the ternary cas, appeared in  the mathematical algebraic foundations of Nambu mechanics developed by Takhtajan and Daletskii in  \cite{ChapBaSilnhomliecolor:DalTakh,ChapBaSilnhomliecolor:Takhtajan:foundgenNambuMech,ChapBaSilnhomliecolor:Takhtajan:cohomology}, as a genralization of Hamiltonian Mechanics involving more than one hamiltonian.   Besides Nambu mechanics, $n$-Lie algebras revealed to have many applications in physics. The second approach consists of generalizing Jacobi condition as a summation  over $S_{2n-1}$ instead of $S_3$.

Hom-type generalizations of $n$-ary algebras were considered first in  \cite{ChapBaSilnhomliecolor:AtMaSi:GenNambuAlg}, where  $n$-Hom-Lie algebras and other $n$-ary Hom-algebras of Lie type and associative type were introduced. The usual identities  are twisted by linear maps. As a particular case one recovers Hom-Lie algebras which were motivated by quantum deformations of algebras of vector fields like Witt and Virasoro algebras.
Further properties, construction methods, examples, cohomology and central extensions of $n$-ary Hom-algebras have been considered in \cite{ChapBaSilnhomliecolor:ams:ternary, ChapBaSilnhomliecolor:ams:n, ChapBaSilnhomliecolor:akms:ternary}.

The (co)homology theory
with adjoint representation for $n$-Lie algebras was introduced by Takhtajan in \cite{ChapBaSilnhomliecolor:Takhtajan:cohomology,ChapBaSilnhomliecolor:DalTakh} and
by Gautheron in \cite{Gautheron-P} from deformation theory viewpoint. The general cohomology theory for $n$-Lie
algebras, Leibniz $n$-algebras was established in \cite{ChapBaSilnhomliecolor:RM, ChapBaSilnhomliecolor:JM}  and $n$-Hom-Lie algebras and superalgebras in \cite{Cohomology-super, AmmarMabroukMakhlouf, AmmarNajib}.

Inspired by these works, we aim to  study the cohomology and deformation of graded $n$-Hom-Lie algebras. Moreover, we consider  the notion of Nijenhuis operator in connection with the  study  of $(n-1)$-order deformation of graded $n$-Hom-Lie algebras. In particular, we discuss the notion of product structure.

This paper is organized as follows. In Section \ref{basics}, we recall some basic definitions on $n$-Hom-Lie color algebras. Section \ref{constructions} is devoted to various constructions of $n$-Hom-Lie  color algebras  and  Hom-Leibniz color algebras. Furthermore, we introduce the notion of a representation of a  $n$-Hom-Lie  color algebra and construct the corresponding semi-direct product.  In Section \ref{cohomology}, we  study   cohomologies  with respect to   given representations. In Section \ref{deformations},  we discuss formal and  infinitesimal deformations of a  $n$-Hom-Lie color algebra. Finally, in Section \ref{nijenhuis}, we introduce the notion of Nijenhuis operators , which is connected  to infinitesimally trivial $(n - 1)$-order
deformations. Moreover,   we define  the   product structure on $n$-Hom-Lie color algebra  using  Nijenhuis conditions.

 \section{Basics on   $n$-Hom-Lie  color algebras}\label{basics}
 This Section contains  preliminaries and definitions on graded spaces, algebras and $n$-Hom-Lie color algebras which is the graded case of $n$-Hom-Lie algebras $($see  \cite{AmmarImenSamiNacer, Multiplicative-S, Zhang} for more details$)$.

Throughout this paper  $\K$ will denote a commutative field of characteristic zero, $\Gamma$ will stand for an abelian group. A vector space $\mathfrak{g}$ is said to be a $\Gamma$-graded if we are given a family $(\mathfrak{g}_{\gamma})_{\gamma\in \Gamma}$ of sub-vector space of $\mathfrak{g}$ such that $\mathfrak{g}=\bigoplus_{\gamma\in \Gamma}\mathfrak{g}_{\gamma}.$
An element $x\in \mathfrak{g}_{\gamma}$ is said to be homogeneous of degree $\gamma$. If the base field is considered as a graded vector space, it is understood that the graduation of $\K$ is given by
${\K}_0 = \K  \ \ \ \mbox{and} \ \ \ {\K}_{\gamma}= \left\{0\right\}, \ \ \mbox{if}\ \ \gamma\in \Gamma\setminus \left\{ 0\right \}.$
Now, let $\mathfrak{g}$ and $\mathfrak{h}$ be two $\Gamma$-graded vector spaces. A linear map $f: \mathfrak{g} \longrightarrow \mathfrak{h}$ is said to be homogeneous of degree $\xi\in \Gamma$, if $f(x)$ is homogeneous of degree $\gamma + \xi$ whenever the element $x\in \mathfrak{g}_{\gamma}$. The set of all  linear maps of degree $\xi$ will be denoted   by ${Hom(\mathfrak{g},\mathfrak{h})}_{\xi}$. Then, the vector space of all linear maps of $\mathfrak{g}$ into $\mathfrak{h}$  is $\Gamma$-graded and denoted by $Hom(\mathfrak{g},\mathfrak{h})=\bigoplus_{\xi\in \Gamma}{Hom(\mathfrak{g},\mathfrak{h})}_{\xi}$.

We mean by algebra (resp. $\Gamma$-graded algebra ) $(\mathfrak{g},\cdot)$ a vector space (resp. $\Gamma$-graded vector space) with multiplication, which  we denote by the concatenation, such that $\mathfrak{g}_{\gamma}\mathfrak{g}_{\gamma'}\subseteq \mathfrak{g}_{\gamma+\gamma'}$, for all $\gamma, \gamma'\in \Gamma$. In the graded case, a map $f: \mathfrak{g} \longrightarrow \mathfrak{h}$, where $\mathfrak{g}$ and $\mathfrak{h}$ are $\Gamma$-graded algebras, is called a $\Gamma$-graded algebra homomorphism  if it is a degree zero algebra homomorphism.

We mean by Hom-algebra (resp. $\Gamma$-graded Hom-algebra) a triple  $(\mathfrak{g},\cdot, \alpha)$ consisting of a vector space (resp. $\Gamma$-graded vector space), a multiplication and an endomorphism  $\alpha$ (twist map)  (resp. of  degree zero endomorphism  $\alpha$).

 For more detail about graded algebraic structures, we refer to \cite{scheunert1979generalized}. In the following, we recall the definition of bicharacter on an abelian group $\Gamma$.
\begin{df}
Let $\Gamma$ be an abelian group. A map $\varepsilon:\Gamma\times\Gamma\rightarrow\K\setminus \left\{0\right\}$ is called a \textbf{bicharacter} on ${\Gamma}$ if the following identities are satisfied
\begin{align}
\label{condi1-bicharacter}&\varepsilon(\gamma_1,\gamma_2)\varepsilon(\gamma_2,\gamma_1)=1,\\
\label{condi2-bicharacter}&\varepsilon(\gamma_1,\gamma_2+\gamma_3)=\varepsilon(\gamma_1,\gamma_2)\varepsilon(\gamma_1,\gamma_3),\\
\label{condi3-bicharacter}&\varepsilon(\gamma_1+\gamma_2,\gamma_3)=\varepsilon(\gamma_1,\gamma_3)\varepsilon(\gamma_2,\gamma_3),\ \ \forall \gamma_1,\gamma_2,\gamma_3\in {\Gamma}.
\end{align}
\end{df}
In particular, the definition above implies the following relations
\begin{align*}
\varepsilon(\gamma,0)=\varepsilon(0,\gamma)=1,\ \varepsilon(\gamma,\gamma)=\pm1, \  \textrm{for\ all}\  \gamma\in\Gamma.
\end{align*}
Let $\mathfrak{g}=\dl\bigoplus_{\gamma \in \Gamma}\mathfrak{g}_\gamma$  be a  $\Gamma$-graded vector space.
If $x$ and $x'$ are two homogeneous elements in $\mathfrak{g}$ of degree $\gamma$ and $\gamma'$ respectively and $\varepsilon$ is a bicharacter, then we shorten the notation by writing $\varepsilon(x,x')$ instead of $\varepsilon(\gamma,\gamma')$. If $X=(x_1,\ldots,x_p)\in \otimes^p\mathfrak{g}$, we set  $$\varepsilon(x,X_i)=\varepsilon(x,\displaystyle\sum_{k=1}^{i-1}x_{k}) \ \ \text{for}\ \ i>1, \text{and}\ \ \varepsilon(x,X_i)=1 \ \ \text{for}\  \ i=1 ,$$
$$ \varepsilon(x,X^i)=\varepsilon(x,\displaystyle\sum_{k=i+1}^px_i)\ \ \text{for}\ \ i<p, \text{and}\ \ \varepsilon(x,X^i)=1 \ \ \text{for}\  \ i=p ,$$
$$ \varepsilon(x,X_i^j)=\varepsilon(x,\displaystyle\sum_{k=i}^{j}x_k).$$

  Then, we define the general linear Lie color algebra $gl(\mathfrak{g})=\dl\bigoplus_{\gamma\in\Gamma}gl(\mathfrak{g})_\gamma$, where
  $$gl(\mathfrak{g})_\gamma=\{f:\mathfrak{g}\rightarrow\mathfrak{g}/f(\mathfrak{g}_{\gamma'})\subset\mathfrak{g}_{\gamma+\gamma'}\ \textrm{and}\ \alpha \circ f =f \circ \alpha\  \textrm{for\ all}\  \gamma'\in\Gamma\}$$

In the following, we recall the notion of $n$-Hom-Lie color algebra given by I. Bakayoko and S. Silvestrov  in \cite{Multiplicative-S} which is a generalization of $n$-Hom-Lie superalgebra introduced in \cite{Cohomology-super}.
\begin{df}
A  \textbf{$n$-Hom-Lie color  algebra} is a graded vector space $\mathfrak{g}=\bigoplus_{\gamma\in \Gamma}\mathfrak{g}_{\gamma}$ with a multilinear map $ [\cdot ,..., \cdot] : \mathfrak{g}\times ... \times \mathfrak{g} \longrightarrow \mathfrak{g} $, a bicharacter  $\varepsilon:\Gamma\times\Gamma\rightarrow\K\setminus \left\{0\right\}$ and a linear map  $ \alpha : \mathfrak{g} \longrightarrow \mathfrak{g}$ of degree zero, such that
  \begin{align}\label{ColorSkewSym}
  & \big[x_1, \cdots,x_i,x_{i+1},\cdots,x_n\big]=  - \varepsilon(x_i,x_{i+1})\big[x_1, \cdots,x_{i+1},x_i,\cdots,x_n\big],
  \\&\nonumber\\\label{identity}
    &\big[ \alpha(x_1),\cdots, \alpha(x_{n-1}),[y_1,y_2,\cdots,y_n] \big ] = \nonumber   \\
   &\quad\quad\quad\quad\quad\quad\quad\quad\sum_{i=1}^{n}  \varepsilon(X,Y_i)\big[\alpha(y_1) ,\cdots, \alpha(y_{i-1}),[x_1,x_2,\cdots,y_i],\alpha(y_{i+1}),\cdots,\alpha(y_n)\big],
  \end{align}

 \end{df} The identity $\eqref{identity}$ is called \emph{$\varepsilon$-n-Hom-Jacobi identity} and the  Eq. $\eqref{ColorSkewSym} $ is equivalent to {\small \begin{align} \label{eq-antisymetric}
    &[x_1,\ldots,x_i,\ldots,x_j,\ldots,x_n] = - \varepsilon (x_i,X_{i+1}^{j-1}) \varepsilon(X_{i+1}^{j-1}, x_j) \varepsilon (x_i,x_j) [x_1,\ldots,x_j,\ldots,x_i,\ldots,x_n]. \end{align} }

Let $(\mathfrak{g}, [\cdot,\cdots,\cdot],\varepsilon, \alpha)$ and $(\mathfrak{g}^{'}, [\cdot,\cdots,\cdot]^{'},\varepsilon, \alpha^{'})$ be two $n$-Hom-Lie color algebras. A  linear map of degree zero $f: \mathfrak{g} \rightarrow \mathfrak{g}^{'}$ is a  $n$-Hom-Lie color algebra \textbf{morphism} if its satisfies
\begin{equation*}
    f \circ \alpha = \alpha^{'} \circ f,
\end{equation*}
\begin{equation*}
    f([x_1,\ldots,x_n]) =[f(x_1),\ldots,f(x_n)]^{'}.
\end{equation*}
\begin{df}
  Let $(\mathfrak{g}, [\cdot,\cdots,\cdot],\varepsilon, \alpha)$ be a  $n$-Hom-Lie color algebra. Its called
  \begin{itemize}
    \item \textbf{Multiplicative} $n$-Hom-Lie color algebra if $\alpha [x_1,\cdots,x_n] = [\alpha(x_1),\cdots,\alpha(x_n)]$.
    \item \textbf{Regular } $n$-Hom-Lie color algebra if $\alpha$ is an automorphism.
    \item \textbf{Involutive} $n$-Hom-Lie color algebra if $\alpha^2 = Id$.
  \end{itemize}
\end{df}
\begin{rems}\
  \begin{enumerate}
  \item When $\Gamma =\{0  \}$, the trivial group, this is the ordinary $n$-Hom-Lie algebra, see \cite{ChapBaSilnhomliecolor:AtMaSi:GenNambuAlg} for more details.
   \item When $\Gamma = \mathbb{Z}_{2}$, $ \varepsilon(x, y) = (-1)^{|x||y|}$, this is the $n$-Hom-Lie superalgebra defined in \cite{Cohomology-super}.
    \item If $n=2$ (resp, $n=3$) we recover Hom-Lie color algebra (resp. $3$-hom Lie color algebra).
    \item When $\alpha = Id$, we get $n$-Lie color algebra.
  \end{enumerate}
\end{rems}
\begin{df}Let $(\mathfrak{g}, [\cdot,\cdots,\cdot],\varepsilon, \alpha)$ be a  $n$-Hom-Lie color algebras. Then \begin{enumerate}
    \item

A $\Gamma$-graded subspace $\mathfrak{h}$ of  $\mathfrak{g}$ is a \textbf{color subalgebra} of $\mathfrak{g}$, if for all $\gamma_1,\gamma_2,\ldots,\gamma_n $ $\in \Gamma$ :
\begin{align*}
    \alpha(\mathfrak{h}_{\gamma_1}) \subseteq \mathfrak{h}_{\gamma_1}
\ \ \text{and}\ \
    [\mathfrak{h}_{\gamma_1},\mathfrak{h}_{\gamma_2},\ldots,\mathfrak{h}_{\gamma_n}] \subseteq \mathfrak{h}_{{\gamma_1}+\ldots+{\gamma_n}}.
\end{align*}
\item
A  \textbf{color ideal} $\mathfrak{I}$ of   $\mathfrak{g}$ is a color Hom-subalgebra of $\mathfrak{g}$ such that, for all $\gamma_1,\gamma_2,\ldots,\gamma_n $ $\in \Gamma$ :
\begin{align*}
    \alpha(\mathfrak{I}_{\gamma_1}) \subseteq \mathfrak{I}_{\gamma_1}
\ \ \text{and}\ \
    [\mathfrak{I}_{\gamma_1},\mathfrak{g}_{\gamma_2},\ldots,\mathfrak{g}_{\gamma_n}] \subseteq \mathfrak{I}_{{\gamma_1}+\ldots+{\gamma_n}}.
\end{align*}
\item The
\textbf{center} of $\mathfrak{g}$ is  the set  $Z(\mathfrak{g})=\{x \in \mathfrak{g} \ | \ [x,y_1,\ldots,y_{n-1}]=0, \forall y_1.\ldots y_n  \in \mathfrak{g}  \}$. It is easy to show  that $Z(\mathfrak{g})$ is a color ideal of $\mathfrak{g}$.\end{enumerate}
\end{df}
\section{Constructions and Representations of $n$-Hom-Lie color algebras}\label{constructions}
In this Section, we show   some constructions of $n$-Hom-Lie  color algebras  and  Hom-Leibniz color algebras associated to $n$-Hom-Lie  color algebras. Moreover,
 we introduce the notion of a representation of  $n$-Hom-Lie  color algebra and construct the
corresponding semi-direct product.
\subsection{Yau-Twist of $n$-Hom-Lie color algebras}
In the following theorem, starting from a $n$-Hom-Lie color algebra and a $n$-Hom-Lie color algebra endomorphism, we construct a new $n$-Hom-Lie color algebra. We say that it is obtained by  Yau twist.
\begin{thm}\label{twisted-thrm}
   Let $(\mathfrak{g}, [\cdot,\cdots,\cdot],\varepsilon, \alpha)$ be a $n$-Hom-Lie color algebra and $\beta : \mathfrak{g} \longrightarrow \mathfrak{g}$ be a n-Hom-Lie color algebra endomorphism of degre zero. Then $(\mathfrak{g}, [\cdot,\cdots,\cdot]_\beta,\varepsilon,\beta \circ \alpha)$, where $ [\cdot,\cdots,\cdot]_\beta = \beta \circ [\cdot,\cdots,\cdot]$, is a n-Hom-Lie color algebra. \\
   Moreover, suppose that $(\mathfrak{g}^{'}, [\cdot,\cdots,\cdot]^{'},\varepsilon ,\alpha')$ is a $n$-Hom-Lie color algebra and  $\beta^{'} : \mathfrak{g}^{'} \longrightarrow \mathfrak{g}^{'} $ is a $n$-Hom-Lie color algebra endomorphism. If $f :\mathfrak{g} \longrightarrow \mathfrak{g}^{'}$ is a $n$-Hom-Lie color algebra morphism that satisfies $f\circ \beta = \beta^{'} \circ  f$, then
   $$ f : (\mathfrak{g}, [\cdot,\cdots,\cdot]_\beta,\varepsilon,\beta \circ \alpha) \longrightarrow (\mathfrak{g}^{'}, [\cdot,\cdots,\cdot]^{'}_{\beta^{'}},\varepsilon, \beta' \circ \alpha') $$
   is a morphism of $n$-Hom-Lie color algebras.
\end{thm}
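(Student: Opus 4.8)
The plan is to verify, in turn, the two defining axioms \eqref{ColorSkewSym} and \eqref{identity} for the triple $(\mathfrak{g},[\cdot,\cdots,\cdot]_\beta,\varepsilon,\beta\circ\alpha)$, and then the two conditions making $f$ a morphism. As preliminary observations, note that $[\cdot,\cdots,\cdot]_\beta=\beta\circ[\cdot,\cdots,\cdot]$ is again multilinear and, since $\beta$ has degree zero, still sends $\mathfrak{g}_{\gamma_1}\times\cdots\times\mathfrak{g}_{\gamma_n}$ into $\mathfrak{g}_{\gamma_1+\cdots+\gamma_n}$; that $\beta\circ\alpha$ is a degree-zero endomorphism; and that, being an $n$-Hom-Lie color algebra endomorphism, $\beta$ satisfies $\beta\circ\alpha=\alpha\circ\beta$ and $\beta([z_1,\cdots,z_n])=[\beta(z_1),\cdots,\beta(z_n)]$. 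The $\varepsilon$-skew-symmetry \eqref{ColorSkewSym} for $[\cdot,\cdots,\cdot]_\beta$ then follows by applying the linear map $\beta$ to both sides of \eqref{ColorSkewSym} for the original bracket.

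For the $\varepsilon$-$n$-Hom-Jacobi identity, set $\widetilde{\alpha}=\beta\circ\alpha$ and expand the left-hand side $\big[\widetilde{\alpha}(x_1),\cdots,\widetilde{\alpha}(x_{n-1}),[y_1,\cdots,y_n]_\beta\big]_\beta$: pulling the two outermost copies of $\beta$ inside via the endomorphism property and commuting $\beta$ past $\alpha$ rewrites it as $\big[\alpha(\beta^2 x_1),\cdots,\alpha(\beta^2 x_{n-1}),[\beta^2 y_1,\cdots,\beta^2 y_n]\big]$. Applying the original identity \eqref{identity} to the arguments $\beta^2 x_j$ and $\beta^2 y_j$ yields $\sum_{i=1}^n\varepsilon(\beta^2 X,\beta^2 Y_i)\big[\alpha(\beta^2 y_1),\cdots,[\beta^2 x_1,\cdots,\beta^2 x_{n-1},\beta^2 y_i],\cdots,\alpha(\beta^2 y_n)\big]$, and since $\beta$ is of degree zero we have $\varepsilon(\beta^2 X,\beta^2 Y_i)=\varepsilon(X,Y_i)$. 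Performing the mirror-image manipulation on the right-hand side of the twisted identity — factoring each summand through $\beta$, recognizing $\beta([x_1,\cdots,y_i])$ and $\beta\alpha(y_j)=\alpha\beta(y_j)$, then iterating — produces the identical expression, so the twisted bracket satisfies \eqref{identity} with twist $\beta\circ\alpha$.

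The morphism assertions are then immediate: $f\circ(\beta\circ\alpha)=\beta'\circ(f\circ\alpha)=\beta'\circ\alpha'\circ f$ using $f\circ\beta=\beta'\circ f$ and $f\circ\alpha=\alpha'\circ f$, and $f([x_1,\cdots,x_n]_\beta)=f(\beta[x_1,\cdots,x_n])=\beta'(f[x_1,\cdots,x_n])=\beta'[f(x_1),\cdots,f(x_n)]'=[f(x_1),\cdots,f(x_n)]'_{\beta'}$. I do not expect a genuine obstacle in this argument; the only place needing a little care is tracking the bicharacter factors $\varepsilon(X,Y_i)$ through the substitution $x_j\mapsto\beta^2 x_j$, $y_j\mapsto\beta^2 y_j$ — which is harmless precisely because $\beta$ preserves the grading — together with keeping straight which powers of $\beta$ originate from $\widetilde{\alpha}=\beta\circ\alpha$ and which from the definition $[\cdot,\cdots,\cdot]_\beta=\beta\circ[\cdot,\cdots,\cdot]$.
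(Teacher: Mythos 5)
Your proof is correct and follows essentially the same direct-verification route as the paper; the only cosmetic difference is that you push $\beta^2$ onto the arguments (using $\beta\circ\alpha=\alpha\circ\beta$ and the fact that $\beta$ preserves degrees, so the $\varepsilon(X,Y_i)$ factors are unchanged) before invoking the $\varepsilon$-$n$-Hom-Jacobi identity, whereas the paper applies that identity to the original arguments inside $\beta^2\big([\alpha(x_1),\cdots,\alpha(x_{n-1}),[y_1,\cdots,y_n]]\big)$ and then pushes $\beta^2$ back inside each summand. You additionally verify the twist-compatibility $f\circ(\beta\circ\alpha)=(\beta'\circ\alpha')\circ f$ explicitly, which the paper leaves implicit.
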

\begin{proof}
Obviously $[\cdot,\cdots,\cdot]_{\beta}$ is a $\varepsilon$-skew-symmetric. Furthermore,  $(\mathfrak{g}, [\cdot,\cdots,\cdot]_\beta,\varepsilon,\beta \circ \alpha)$ satisfies the $\varepsilon$-$n$-Hom-Jacobi identity $\eqref{identity}$, indeed
\begin{align*}
    & [\beta \circ \alpha (x_1),\cdots,\beta \circ \alpha (x_{n-1}),[y_1,\cdots,y_n]_\beta]_\beta \\
    = &  \beta^2 ([\alpha (x_1),\cdots,\alpha (x_{n-1}),[y_1,\cdots,y_n]]) \\
    =&   \sum_{i=1}^{n}  \varepsilon(X,Y_i) \beta^2 (\big[ \alpha(y_1) ,\cdots,  \alpha(y_{i-1}),[x_1,x_2,\cdots,y_i] \alpha(y_{i+1}),\cdots, \alpha(y_n)\big])\\=&   \sum_{i=1}^{n}  \varepsilon(X,Y_i) \big[\beta \circ \alpha(y_1) ,\cdots, \beta \circ \alpha(y_{i-1}),[x_1,x_2,\cdots,y_i]_\beta,\beta \circ \alpha(y_{i+1}),\cdots,\beta \circ \alpha(y_n)\big]_\beta.
\end{align*}
The second assertion follows from
\begin{align*}
    f\big{(} [x_1,\cdots,x_n]_\beta\big{)} = & [f \circ\beta(x_1),\cdots,f \circ\beta(x_n)]^{'} = [\beta^{'} \circ f (x_1), \cdots, \beta^{'} \circ f (x_n)]^{'} = [  f (x_1), \cdots, f (x_n)]^{'}_{\beta^{'}}.
\end{align*}
The proof is finished.
\end{proof}
In particular, we have the following construction of $n$-Hom-Lie color algebra using $n$-Lie color algebras and algebra morphisms.
\begin{cor}\label{twist}
 Let $(\mathfrak{g}, [\cdot,\cdots,\cdot],\varepsilon)$ be a $n$-Lie color algebra and $\alpha : \mathfrak{g} \longrightarrow \mathfrak{g}$ be a  n-Lie color algebra endomorphism. Then $(\mathfrak{g}, [\cdot,\cdots,\cdot]_\alpha ,\varepsilon,\alpha)$, where $ [\cdot,\cdots,\cdot]_\alpha = \alpha\circ [\cdot,\cdots,\cdot]$, is a n-Hom-Lie color algebra.
\end{cor}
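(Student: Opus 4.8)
The plan is to deduce Corollary \ref{twist} as a special case of Theorem \ref{twisted-thrm}. The key observation is that an $n$-Lie color algebra is precisely an $n$-Hom-Lie color algebra whose twist map is the identity: indeed, setting $\alpha=\mathrm{Id}$ in the $\varepsilon$-$n$-Hom-Jacobi identity \eqref{identity} recovers the ungraded-in-$\alpha$ Jacobi condition of an $n$-Lie color algebra, as already noted in the Remarks following the definition. So I would start from the $n$-Lie color algebra $(\mathfrak{g},[\cdot,\cdots,\cdot],\varepsilon)$, viewed as the $n$-Hom-Lie color algebra $(\mathfrak{g},[\cdot,\cdots,\cdot],\varepsilon,\mathrm{Id})$.

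Next I would check that the hypotheses of Theorem \ref{twisted-thrm} are met with this choice. We are given $\alpha\colon\mathfrak{g}\to\mathfrak{g}$, an endomorphism of the $n$-Lie color algebra of degree zero; since the twist map of $(\mathfrak{g},[\cdot,\cdots,\cdot],\varepsilon,\mathrm{Id})$ is the identity, $\alpha$ trivially commutes with it, and $\alpha([x_1,\dots,x_n])=[\alpha(x_1),\dots,\alpha(x_n)]$ is exactly the statement that $\alpha$ is an $n$-Hom-Lie color algebra endomorphism of $(\mathfrak{g},[\cdot,\cdots,\cdot],\varepsilon,\mathrm{Id})$. Thus $\alpha$ plays the role of ``$\beta$'' in Theorem \ref{twisted-thrm}.

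Applying Theorem \ref{twisted-thrm} with $\beta=\alpha$ and the original twist equal to $\mathrm{Id}$, we conclude that $(\mathfrak{g},[\cdot,\cdots,\cdot]_{\beta},\varepsilon,\beta\circ\mathrm{Id})$ is an $n$-Hom-Lie color algebra, where $[\cdot,\cdots,\cdot]_{\beta}=\beta\circ[\cdot,\cdots,\cdot]$. Since $\beta\circ\mathrm{Id}=\alpha$ and $[\cdot,\cdots,\cdot]_{\beta}=\alpha\circ[\cdot,\cdots,\cdot]=[\cdot,\cdots,\cdot]_{\alpha}$, this is exactly the assertion that $(\mathfrak{g},[\cdot,\cdots,\cdot]_{\alpha},\varepsilon,\alpha)$ is an $n$-Hom-Lie color algebra, which is what the corollary claims.

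There is essentially no obstacle here; the only point requiring a word of care is the identification of an $n$-Lie color algebra with an $n$-Hom-Lie color algebra having $\mathrm{Id}$ as twist map, and the observation that every algebra endomorphism of such a structure is automatically a Hom-algebra endomorphism because commutation with $\mathrm{Id}$ is vacuous. One could alternatively give a direct proof by specializing the computation in the proof of Theorem \ref{twisted-thrm} (replacing $\beta\circ\alpha$ by $\alpha$ and $\beta^2$ by $\alpha$ composed once), but invoking the theorem is cleaner and shorter.
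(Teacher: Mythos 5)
Your proof is correct and matches the paper's intent: the corollary is stated there precisely as the specialization of Theorem \ref{twisted-thrm} to an $n$-Lie color algebra viewed as an $n$-Hom-Lie color algebra with twist $\mathrm{Id}$, taking $\beta=\alpha$. Your added remark that the compatibility $\beta\circ\mathrm{Id}=\mathrm{Id}\circ\beta$ is vacuous is exactly the small point that makes the specialization legitimate, so nothing further is needed.
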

\begin{exa}\label{Exple} \cite{Multiplicative-S}
  Let $\Gamma=\mathbb{Z}_2\times\mathbb{Z}_2,$ and $ \varepsilon((i_1, i_2), (j_1, j_2))=(-1)^{i_1j_2-i_2j_1}$. Let  $L$ be a $\Gamma$-graded vector  space $$\mathfrak{g}=\mathfrak{g}_{(0, 0)}\oplus \mathfrak{g}_{(0, 1)}\oplus \mathfrak{g}_{(1, 0)}\oplus \mathfrak{g}_{(1, 1)}$$ with
$\mathfrak{g}_{(0, 0)}=<e_1, e_2>,\quad  \mathfrak{g}_{(0, 1)}=<e_3>,\quad  \mathfrak{g}_{(1, 0)}=<e_4>,\quad \mathfrak{g}_{(1, 1)}=<e_5>.$\\
The  bracket $[\cdot, \cdot, \cdot, \cdot] : \mathfrak{g}\times \mathfrak{g}\times \mathfrak{g}\times \mathfrak{g}\rightarrow \mathfrak{g}$  defined with respect to  basis $\{e_i \mid i=1, \dots ,5\}$
by
\begin{gather*}
[e_2, e_3, e_4, e_5]=e_1,\ \  [e_1, e_3, e_4, e_5]=e_2, \ \  [e_1, e_2, e_4, e_5]=e_3, \\
[e_1, e_2, e_3, e_4]=0,\ \  [e_1, e_2, e_3, e_5]=0
\end{gather*}
makes $\mathfrak{g}$ into a five dimensional $4$-Lie color algebra.

Now, we define  a morphism $\alpha :\mathfrak{g}\rightarrow \mathfrak{g}$ by:
$$\alpha(e_1)=e_2,\quad \alpha(e_2)=e_1, \quad \alpha(e_i)=e_i, \quad i=3, 4, 5.$$
Then $\mathfrak{g}_\alpha=(\mathfrak{g}, [\cdot, \cdot, \cdot, \cdot]_\alpha, \varepsilon, \alpha)$ is a $4$-Hom-Lie color algebra, where the new brackets are given as
\begin{gather*}
[e_2, e_3, e_4, e_5]_\alpha=e_2,\ \  [e_1, e_3, e_4, e_5]_\alpha=e_1, \ \  [e_1, e_2, e_4, e_5]_\alpha=e_3, \\
[e_1, e_2, e_3, e_4]_\alpha=0,\ \  [e_1, e_2, e_3, e_5]_\alpha=0.
\end{gather*}
\end{exa}
  Let $ \mathcal{G}= (\mathfrak{g}, [\cdot,\cdots,\cdot],\varepsilon, \alpha)$ be a multiplicative $n$-Hom-Lie color algebra and $ p \geq 0$. Define the $p$th derived  $\mathcal{G}$ by:
  $$ \mathcal{G}^{p} = (\mathfrak{g},[\cdot,\cdots,\cdot]^{p}= \alpha^{2p-1} \circ [\cdot,\cdots,\cdot],\varepsilon, \alpha^{2p}) . $$
  Note that $\mathcal{G}^{0} = \mathcal{G}$, $\mathcal{G}^{1}= (\mathfrak{g},[\cdot,\cdots,\cdot]^{1}= \alpha \circ [\cdot,\cdots,\cdot] ,\varepsilon, \alpha^{2})$.

\begin{cor}
  With the above notations, the $p$th derived   $ \mathcal{G}$ is also a $n$-Hom-Lie color algebra for each $ p \geq 0$.
\end{cor}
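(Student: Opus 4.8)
The plan is to reduce the statement to the Yau-twist construction of Theorem~\ref{twisted-thrm}, so that essentially no computation is needed beyond bookkeeping with powers of $\alpha$. First I would dispose of the case $p=0$: by the separate definition $\mathcal{G}^{0}=\mathcal{G}$, so there is nothing to prove (note that the displayed formula $[\cdot,\cdots,\cdot]^{p}=\alpha^{2p-1}\circ[\cdot,\cdots,\cdot]$ only makes literal sense for $p\ge 1$, since it would involve $\alpha^{-1}$ at $p=0$). So I may assume $p\ge 1$ and recall $\mathcal{G}^{p}=(\mathfrak{g},\,[\cdot,\cdots,\cdot]^{p}=\alpha^{2p-1}\circ[\cdot,\cdots,\cdot],\,\varepsilon,\,\alpha^{2p})$.

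The key observation is that $\mathcal{G}^{p}$ is precisely the Yau twist of $\mathcal{G}$ by the linear map $\beta:=\alpha^{2p-1}$. Since $\mathcal{G}$ is multiplicative, $\alpha$ commutes with the bracket, hence so does every power $\alpha^{k}$; as $\alpha$ has degree zero, $\alpha^{2p-1}$ is a degree-zero $n$-Hom-Lie color algebra endomorphism of $\mathcal{G}$ (it trivially commutes with $\alpha$ as well). Applying Theorem~\ref{twisted-thrm} with $\beta=\alpha^{2p-1}$ yields the $n$-Hom-Lie color algebra $(\mathfrak{g},\,\beta\circ[\cdot,\cdots,\cdot],\,\varepsilon,\,\beta\circ\alpha)=(\mathfrak{g},\,\alpha^{2p-1}\circ[\cdot,\cdots,\cdot],\,\varepsilon,\,\alpha^{2p})$, which is exactly $\mathcal{G}^{p}$. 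This proves the claim for all $p\ge 1$, and together with the case $p=0$ it proves the corollary.

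There is no real obstacle here; the only point requiring care is the multiplicativity hypothesis, which is exactly what guarantees that $\alpha^{2p-1}$ is an algebra endomorphism of $\mathcal{G}$ — without it the argument would break down. As an alternative one may instead argue by induction on $p$, the inductive step presenting $\mathcal{G}^{p+1}$ as the Yau twist of $\mathcal{G}^{p}$ by $\beta=\alpha^{2}$ (which is an endomorphism of $\mathcal{G}^{p}$ again by multiplicativity of $\mathcal{G}$, since $\alpha^{2}\circ(\alpha^{2p-1}\circ[\cdot,\cdots,\cdot])=\alpha^{2p+1}\circ[\cdot,\cdots,\cdot]$ on both sides), and invoking Theorem~\ref{twisted-thrm} once per step. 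A purely direct verification is of course also available — $\varepsilon$-skew-symmetry of $[\cdot,\cdots,\cdot]^{p}$ is immediate from linearity of $\alpha^{2p-1}$, and the $\varepsilon$-$n$-Hom-Jacobi identity for $(\,[\cdot,\cdots,\cdot]^{p},\alpha^{2p})$ follows by applying $\alpha^{2(2p-1)}$ to identity~\eqref{identity} of $\mathcal{G}$ and commuting the powers of $\alpha$ through the brackets using multiplicativity — but this is merely the Yau-twist proof written out, so I would present the one-line argument above.
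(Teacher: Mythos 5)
Your proposal is correct and is essentially the paper's intended argument: the paper states this as an immediate corollary of the Yau-twist theorem, and your reduction via $\beta=\alpha^{2p-1}$ (using multiplicativity to see that $\beta$ is a degree-zero endomorphism commuting with $\alpha$), together with the separate treatment of $p=0$, is exactly that argument.
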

\begin{lem} \label{lemme}
 Let $(\mathfrak{g},[\cdot,\cdots,\cdot],\varepsilon,\alpha)$ be  a  $n$-Hom-Lie color algebra and $\mathfrak{h}$ be a $\Gamma $-graded vector space. If there exists a bijective linear map of degree zero $f: \mathfrak{h} \to \mathfrak{g} $, then $(\mathfrak{h},[\cdot,\cdots,\cdot]',\varepsilon, f^{-1} \circ\alpha\circ f )$ is  a  $n$-Hom-Lie color algebra, where the  $n$-ary bracket  $[\cdot,\ldots,\cdot]'$ is defined  by $$[x_1,\ldots,x_n]' = f^{-1} \circ [f(x_1),\ldots,f(x_n)],\ \forall \ x_i \in \mathfrak{h}.$$
  Moreover, $f$ is an algebra isomorphism.
 \end{lem}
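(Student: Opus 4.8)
The plan is to verify that transporting the structure along $f$ produces the required axioms, the key point being that both $f$ and $f^{-1}$ are degree-zero, hence preserve the $\Gamma$-grading and leave every bicharacter factor unchanged. First I would record the basic facts: since $f$ is bijective and homogeneous of degree $0$, its inverse $f^{-1}$ is also homogeneous of degree $0$; consequently $\beta:=f^{-1}\circ\alpha\circ f$ is a degree-zero endomorphism of $\mathfrak{h}$, and the $n$-ary bracket $[x_1,\ldots,x_n]'=f^{-1}[f(x_1),\ldots,f(x_n)]$ is multilinear and maps $\mathfrak{h}_{\gamma_1}\times\cdots\times\mathfrak{h}_{\gamma_n}$ into $\mathfrak{h}_{\gamma_1+\cdots+\gamma_n}$, because $f$ and the original bracket do. In particular, for homogeneous $x_i\in\mathfrak{h}$ we have $\varepsilon(x_i,x_j)=\varepsilon(f(x_i),f(x_j))$, which is what makes the scalar bookkeeping below automatic.

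Second, I would check the $\varepsilon$-skew-symmetry \eqref{ColorSkewSym} for $[\cdot,\ldots,\cdot]'$. Since $f$ is bijective it suffices to apply $f$ and show $[f(x_1),\ldots,f(x_i),f(x_{i+1}),\ldots,f(x_n)] = -\varepsilon(x_i,x_{i+1})[f(x_1),\ldots,f(x_{i+1}),f(x_i),\ldots,f(x_n)]$, which is exactly \eqref{ColorSkewSym} in $\mathfrak{g}$ combined with $\varepsilon(x_i,x_{i+1})=\varepsilon(f(x_i),f(x_{i+1}))$.

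Third — the main computation — I would verify the $\varepsilon$-$n$-Hom-Jacobi identity \eqref{identity} for $(\mathfrak{h},[\cdot,\ldots,\cdot]',\beta)$. Starting from $[\beta(x_1),\ldots,\beta(x_{n-1}),[y_1,\ldots,y_n]']'$ and applying $f$, one uses $f\circ\beta=\alpha\circ f$ together with $f([z_1,\ldots,z_n]')=[f(z_1),\ldots,f(z_n)]$ repeatedly to rewrite the left-hand side as $f^{-1}$ applied to $\big[\alpha f(x_1),\ldots,\alpha f(x_{n-1}),[f(y_1),\ldots,f(y_n)]\big]$. Now apply \eqref{identity} in $\mathfrak{g}$; because $f$ preserves degrees the scalars $\varepsilon(X,Y_i)$ formed from the degrees of the $x$'s and $y$'s are the same computed in $\mathfrak{h}$ or in $\mathfrak{g}$, so the right-hand side becomes $f^{-1}$ of $\sum_{i=1}^n \varepsilon(X,Y_i)\big[\alpha f(y_1),\ldots,[f(x_1),\ldots,f(y_i)],\ldots,\alpha f(y_n)\big]$; pushing $f^{-1}$ back inside via $f^{-1}\circ\alpha=\beta\circ f^{-1}$ and the definition of $[\cdot,\ldots,\cdot]'$ yields precisely $\sum_{i=1}^n \varepsilon(X,Y_i)\big[\beta(y_1),\ldots,[x_1,\ldots,y_i]',\ldots,\beta(y_n)\big]'$, as required.

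Finally, for the last assertion it is enough to observe that $f$ meets the two defining conditions of a morphism of $n$-Hom-Lie color algebras: $f\circ\beta=f\circ f^{-1}\circ\alpha\circ f=\alpha\circ f$, and $f([x_1,\ldots,x_n]')=f\circ f^{-1}[f(x_1),\ldots,f(x_n)]=[f(x_1),\ldots,f(x_n)]$; being in addition bijective and of degree zero, $f$ is an isomorphism. I do not anticipate any genuine obstacle here: the only point requiring care is the invariance of the bicharacter factors, and this is taken care of entirely by $f$ being homogeneous of degree $0$.
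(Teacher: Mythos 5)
Your argument is correct and is exactly the transport-of-structure verification that the paper leaves implicit: its own proof of Lemma \ref{lemme} consists of the single word ``Straightforward.'' You have simply filled in the details (degree-zero bookkeeping for the bicharacter, skew-symmetry, the Hom-Jacobi identity via $f\circ\beta=\alpha\circ f$, and the morphism property), all in the way the authors intended.
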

 \begin{proof}
 Straightforward.
 \end{proof}

\begin{prop}
   Let $(\mathfrak{g}, [\cdot,\cdots,\cdot],\varepsilon, \alpha)$ be a regular $n$-Hom-Lie color algebra, then $(V, [\cdot,\cdots,\cdot]_{\alpha^{-1}}= \alpha^{-1} \circ [\cdot,\cdots,\cdot],\varepsilon)$ is a $n$-Lie color algebra.
\end{prop}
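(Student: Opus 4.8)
The plan is to verify directly that the untwisted bracket $[\cdot,\ldots,\cdot]_{\alpha^{-1}}:=\alpha^{-1}\circ[\cdot,\ldots,\cdot]$ equips $\mathfrak g$ with the structure of an $n$-Hom-Lie color algebra whose twist map is the identity; by the last item of the Remarks following the definition of $n$-Hom-Lie color algebra, such an object is precisely an $n$-Lie color algebra, which finishes the proof. First I would record what regularity buys us: since $\alpha$ is an automorphism it is in particular an endomorphism, so $\alpha([x_1,\ldots,x_n])=[\alpha(x_1),\ldots,\alpha(x_n)]$, and applying $\alpha^{-1}$ to both sides shows that $\alpha^{-1}$ enjoys the same multiplicativity, $\alpha^{-1}([x_1,\ldots,x_n])=[\alpha^{-1}(x_1),\ldots,\alpha^{-1}(x_n)]$; moreover $\alpha$ and $\alpha^{-1}$ are of degree zero, hence leave every bicharacter factor $\varepsilon(\cdot,\cdot)$ unchanged.

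The $\varepsilon$-skew-symmetry of $[\cdot,\ldots,\cdot]_{\alpha^{-1}}$ is immediate, since post-composing the $\varepsilon$-skew-symmetric map $[\cdot,\ldots,\cdot]$ with the linear map $\alpha^{-1}$ does not affect relations \eqref{ColorSkewSym} and \eqref{eq-antisymetric}. The substantive step is the Filippov-type identity with trivial twist, namely
$$[x_1,\ldots,x_{n-1},[y_1,\ldots,y_n]_{\alpha^{-1}}]_{\alpha^{-1}}=\sum_{i=1}^{n}\varepsilon(X,Y_i)\,[y_1,\ldots,y_{i-1},[x_1,\ldots,x_{n-1},y_i]_{\alpha^{-1}},y_{i+1},\ldots,y_n]_{\alpha^{-1}}.$$
I would prove this by the substitution $x_j\mapsto\alpha^{-1}(x_j)$ and $y_j\mapsto\alpha^{-1}(y_j)$: using multiplicativity of $\alpha^{-1}$ twice, the left-hand side rewrites as $\alpha^{-1}\big[\alpha(u_1),\ldots,\alpha(u_{n-1}),[v_1,\ldots,v_n]\big]$ with $u_j=\alpha^{-1}(x_j)$ and $v_j=\alpha^{-1}(y_j)$, which is exactly the form to which the $\varepsilon$-$n$-Hom-Jacobi identity \eqref{identity} of $(\mathfrak g,[\cdot,\ldots,\cdot],\varepsilon,\alpha)$ applies. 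In the resulting sum the outer entries $\alpha(v_j)$ collapse to $y_j$, the inner bracket $[u_1,\ldots,u_{n-1},v_i]$ equals $\alpha^{-1}[x_1,\ldots,x_{n-1},y_i]=[x_1,\ldots,x_{n-1},y_i]_{\alpha^{-1}}$ again by multiplicativity of $\alpha^{-1}$, and the degree-zero property identifies the color coefficient $\varepsilon(U,V_i)$ with $\varepsilon(X,Y_i)$; distributing the outermost $\alpha^{-1}$ over the sum and absorbing it into each summand yields precisely the right-hand side.

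The only point demanding attention is the bookkeeping of the bicharacter factors and the correct placement of $\alpha^{-1}$ when it is moved across the summation, but because $\alpha^{-1}$ is linear and degree-preserving no extra scalar or color sign is produced, so this is routine rather than a genuine obstacle. As a consistency check one may observe that the construction simply inverts the Yau twist: applying Corollary \ref{twist} to $(\mathfrak g,[\cdot,\ldots,\cdot]_{\alpha^{-1}},\varepsilon)$ with the endomorphism $\alpha$ recovers the original $(\mathfrak g,[\cdot,\ldots,\cdot],\varepsilon,\alpha)$, since $\alpha\circ[\cdot,\ldots,\cdot]_{\alpha^{-1}}=[\cdot,\ldots,\cdot]$.
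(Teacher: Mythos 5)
Your argument is correct: $\varepsilon$-skew-symmetry survives post-composition with the linear degree-zero map $\alpha^{-1}$, and your substitution $x_j\mapsto\alpha^{-1}(x_j)$, $y_j\mapsto\alpha^{-1}(y_j)$ combined with the multiplicativity of $\alpha^{-1}$ (which you correctly derive from $\alpha$ being an algebra automorphism, the reading of ``regular'' that the statement needs) reduces the untwisted Filippov identity exactly to the $\varepsilon$-$n$-Hom-Jacobi identity \eqref{identity}, with the color factors unchanged because $\alpha^{\pm1}$ has degree zero. The paper itself offers no written proof of this proposition, and the route it implicitly has available is even shorter than your direct computation: since $\alpha^{-1}$ is an endomorphism of $(\mathfrak g,[\cdot,\ldots,\cdot],\varepsilon,\alpha)$ (it is multiplicative and commutes with $\alpha$), Theorem \ref{twisted-thrm} applied with $\beta=\alpha^{-1}$ immediately yields that $(\mathfrak g,\alpha^{-1}\circ[\cdot,\ldots,\cdot],\varepsilon,\alpha^{-1}\circ\alpha=\mathrm{id})$ is an $n$-Hom-Lie color algebra with identity twist, i.e.\ an $n$-Lie color algebra. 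Your hands-on verification is essentially the proof of that theorem specialized to $\beta=\alpha^{-1}$, so nothing is lost; what the Yau-twist route buys is brevity and the observation (which you make yourself as a consistency check via Corollary \ref{twist}) that twisting and untwisting by $\alpha$ are mutually inverse operations on this pair of structures.
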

\begin{cor}
Let $(\mathfrak{g}, [\cdot,\cdots,\cdot],\varepsilon, \alpha)$ be a $n$-Hom-Lie color algebra with $n \geq 3$. Let  $a_1,\ldots,a_p\in \mathfrak{g}_0$ such that $\alpha(a_i)=a_i$,$\forall i \in \{1,\ldots,p\}$. Then,  $(\mathfrak{g}, \{\cdot,\cdots,\cdot\},\varepsilon, \alpha)$ is an $(n-p)$-Hom-Lie color algebra, where
\begin{equation}
    \{x_1,...,x_{n-p}\}=[a_1,\ldots,a_p,x_1,...,x_{n-p}],\ \ \forall x_1,...,x_{n-p}\in \mathfrak{g}.
\end{equation}
\end{cor}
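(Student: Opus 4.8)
The plan is to verify directly that the $(n-p)$-ary bracket $\{x_1,\ldots,x_{n-p}\}=[a_1,\ldots,a_p,x_1,\ldots,x_{n-p}]$, together with the same bicharacter $\varepsilon$ and the same twist map $\alpha$, satisfies the two axioms of an $(n-p)$-Hom-Lie color algebra, namely $\varepsilon$-skew-symmetry \eqref{ColorSkewSym} and the $\varepsilon$-$(n-p)$-Hom-Jacobi identity \eqref{identity}. Since the $a_i$ are homogeneous of degree $0$, inserting them at the front of the bracket does not disturb any $\varepsilon$-factors: swapping two adjacent arguments $x_i,x_{i+1}$ inside $\{\cdot,\ldots,\cdot\}$ is the same swap inside $[\cdot,\ldots,\cdot]$, so $\varepsilon$-skew-symmetry of $\{\cdot,\ldots,\cdot\}$ is immediate from that of $[\cdot,\ldots,\cdot]$. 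One also notes $\alpha\{x_1,\ldots,x_{n-p}\}$ behaves correctly only if $\mathfrak{g}$ is multiplicative, but the statement does not require multiplicativity, so I would not even phrase a multiplicativity claim; the twist compatibility needed is just that $\alpha$ is a degree-zero linear endomorphism, which is inherited.

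The substantive step is the Hom-Jacobi identity. First I would apply the original $\varepsilon$-$n$-Hom-Jacobi identity \eqref{identity} to the $n$-tuple $(a_1,\ldots,a_p,x_1,\ldots,x_{n-p-1})$ in the ``outer'' slots and $(a_1,\ldots,a_p,y_1,\ldots,y_{n-p})$ in the ``inner'' slots — that is, expand
\[
\big[\alpha(a_1),\ldots,\alpha(a_p),\alpha(x_1),\ldots,\alpha(x_{n-p-1}),[a_1,\ldots,a_p,y_1,\ldots,y_{n-p}]\big].
\]
Using $\alpha(a_i)=a_i$, the left side is exactly $\{\alpha(x_1),\ldots,\alpha(x_{n-p-1}),\{y_1,\ldots,y_{n-p}\}\}$. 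On the right side, \eqref{identity} produces a sum over all $n$ positions of the inner bracket. The terms where the modified inner slot is one of the first $p$ positions (the $a_j$'s) are of the form $[\ a_1,\ldots,[a_1,\ldots,x_{n-p-1},a_j],\ldots,a_p,\alpha(y_1),\ldots\ ]$; I would argue these vanish. Indeed, in each such term the inner bracket $[a_1,\ldots,a_{j-1},x_1,\ldots,x_{n-p-1},a_j]$ then sits as one argument of an outer bracket whose other arguments include $a_1,\ldots,\widehat{a_j},\ldots,a_p$ together with $\alpha(y_k)$'s; collecting the repeated $a_i$'s and using $\varepsilon$-skew-symmetry together with the hypothesis $n\ge 3$ (so there are at least two $a$-slots to play with, or the bracket has a repeated entry up to the $\alpha$-action) forces the term to be zero — this is the standard ``repeated fixed generator'' cancellation, and it is where I expect to spend the most care, since one must track the $\varepsilon$-signs coming from \eqref{eq-antisymetric} when moving the $a_j$ back next to the other $a_i$'s, and one must handle the $\alpha$'s landing on the $y_k$'s correctly (they do not interfere because the $a_i$ are $\alpha$-fixed and degree $0$).

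The surviving terms are precisely those where the modified slot is one of the last $n-p$ positions, i.e. where some $y_i$ is replaced by $[a_1,\ldots,a_p,x_1,\ldots,x_{n-p-1},y_i]=\{x_1,\ldots,x_{n-p-1},y_i\}$, with coefficient $\varepsilon(X,Y_i)$ where now $X=(x_1,\ldots,x_{n-p-1})$ (the $a_j$ contribute trivially to $\varepsilon$ since they have degree $0$). Recombining $a_1,\ldots,a_p$ at the front of each outer bracket and using $\alpha(a_i)=a_i$ turns each such term into
\[
\varepsilon(X,Y_i)\,\{\alpha(y_1),\ldots,\alpha(y_{i-1}),\{x_1,\ldots,x_{n-p-1},y_i\},\alpha(y_{i+1}),\ldots,\alpha(y_{n-p})\},
\]
which is exactly the right-hand side of the $\varepsilon$-$(n-p)$-Hom-Jacobi identity for $\{\cdot,\ldots,\cdot\}$. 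Thus the identity holds and $(\mathfrak{g},\{\cdot,\ldots,\cdot\},\varepsilon,\alpha)$ is an $(n-p)$-Hom-Lie color algebra. The main obstacle, as noted, is the bookkeeping in the vanishing of the ``$a_j$-slot'' terms; everything else is a direct transcription of \eqref{ColorSkewSym} and \eqref{identity}.
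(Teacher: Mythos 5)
The paper states this corollary without proof, so there is no internal argument to compare against; judged on its own, your direct verification is the right strategy and the conclusion is sound. The $\varepsilon$-skew-symmetry step and the identification of the surviving terms are correct: applying \eqref{identity} to the outer tuple $(a_1,\ldots,a_p,x_1,\ldots,x_{n-p-1})$ and the inner tuple $(a_1,\ldots,a_p,y_1,\ldots,y_{n-p})$, and using $\alpha(a_i)=a_i$ together with $\varepsilon(0,\cdot)=\varepsilon(\cdot,0)=1$ (so the $a_i$ neither disturb the bicharacter coefficients nor the $\alpha$'s), does reproduce exactly the $\varepsilon$-$(n-p)$-Hom-Jacobi identity for $\{\cdot,\ldots,\cdot\}$ once the first $p$ terms on the right-hand side are discarded.

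The one step you should tighten is precisely the one you flagged. Your primary mechanism for killing the terms with $i\le p$ --- rearranging the \emph{outer} bracket so as to ``collect the repeated $a_i$'s'' --- does not work as described: the outer bracket has no repeated argument, since its $i$-th slot is occupied by the value of the inner bracket, which is not one of the $a_j$'s; you also mis-transcribed that inner bracket, which is $[a_1,\ldots,a_p,x_1,\ldots,x_{n-p-1},a_j]$, containing all of $a_1,\ldots,a_p$. The correct (and much simpler) reason is the second alternative you hint at: this inner bracket contains the element $a_j$ twice, in slot $j$ and in the last slot. Since $a_j$ has degree $0$, every bicharacter factor in \eqref{eq-antisymetric} equals $1$, so interchanging the two occurrences gives that the bracket equals its own negative, hence it vanishes because $\mathrm{char}\,\K=0$. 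Thus each such term is zero before any outer-bracket manipulation, no $\alpha$'s need to be tracked there, and neither $n\ge 3$ nor the availability of ``two $a$-slots'' plays any role in this cancellation. With that substitution your argument is complete.
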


\subsection{From $n$-Hom-Lie color algebras to Hom-Leibniz color algebras}

In the following, we recall the definition of Hom-Leibniz color algebra introduced in \cite{Multiplicative-S}. We construct a Hom-Leibniz color algebra starting from a given $n$-Hom-Lie color algebra.
\begin{df}  
A \textbf{ Hom-Leibniz color} algebra is a quadruple $(\mathcal{L},[\cdot,\cdot],\varepsilon,\alpha)$ consisting of a $\Gamma$-graded vector space $ \mathcal{L}$, a bicharacter  $\varepsilon$, a bilinear map of degree zero
$[\cdot,\cdot]:\mathcal{L}\times\mathcal{L}\rightarrow\mathcal{L}$  and a  homomorphism   $\alpha:\mathcal{L}\rightarrow\mathcal{L}$ such that, for any  homogeneous elements $x,y,z\in \mathcal{L}$
\begin{align}
\label{LeibnizId1}[\alpha(x),[y,z]]-\varepsilon(x,y)[\alpha(y),[x,z]]=[[x,y],\alpha(z)]&\ \ (\varepsilon-\textrm{Hom-Leibniz\ identity}).
\end{align}
 In particular, if $\alpha$ is a morphism of  Hom-Leibniz color algebra $($i.e. $\alpha\circ[\cdot,\cdot]=[\cdot,\cdot]\circ\alpha^{\otimes2})$ we call   $(\mathcal{L},[\cdot,\cdot],\varepsilon,\alpha)$ a multiplicative Hom-Leibniz color  algebra.
\end{df}

Let $(\mathfrak{g}, [\cdot,\cdots,\cdot],\varepsilon, \alpha)$ be a $n$-Hom-Lie color algebra, we define  the $\Gamma$-graded space $\mathcal{L}=\mathcal{L}(\mathfrak{g}) := \bigwedge^{n-1} \mathfrak{g} $, which is called fundamental set, we define, for a fundamental object $X = x_1 \wedge \cdots \wedge x_{n-1}$ $\in \mathcal{L}$, an adjoint map $ad_X$ as a linear map on  $\mathfrak{g}$ by :
\begin{equation}\label{adjoint}
  ad_X \cdot y = [x_1,\cdots, x_{n-1},y].
\end{equation}
We define a linear map $\tilde{\alpha} : \mathcal{L} \longrightarrow \mathcal{L}$ by:
$$ \tilde{\alpha}(X) = \alpha(x_1) \wedge \cdots \wedge\alpha(x_{n-1}).$$
Then the color $\varepsilon$-$n$-Hom-Jacobi identity $\eqref{identity}$ may be written in terms of adjoint maps as:
\begin{equation*}
   ad_{\tilde{\alpha}(X)} \cdot [y_1,\cdots,y_n] = \sum_{i=1}^{n} \varepsilon(X,Y_{i} )[\alpha(y_1),\cdots,\alpha(y_{i-1}),  ad_X \cdot  y_i,\cdots,\alpha( y_{n})].
\end{equation*}
Now, we define a  bilinear map of degree zero $[\cdot,\cdot]_\alpha : \mathcal{L} \times \mathcal{L} \longrightarrow \mathcal{L}$ by :
\begin{equation}\label{crochet-leibniz}
    [X,Y]_\alpha
= \sum_{i=1}^{n-1} \varepsilon(X,Y_i )( \alpha(y_1),\cdots,\alpha(y_{i-1}),  ad_X \cdot y_i,\cdots,\alpha( y_{n-1})).\end{equation}
\begin{prop} \label{propo-leibniz}With the above notations,
the map $ad$ satisfies for all $X,Y \in \mathcal{H}(\mathcal{L}(\mathfrak{g})) $ and $z \in \mathcal{H}(\mathfrak{g})$ the equality
\begin{equation}\label{map-ad}
 ad_{[X,Y]_\alpha} \cdot \alpha (z) = ad_{\tilde{\alpha}(X)} \cdot ( ad_{Y} \cdot z)  - \varepsilon (X,Y ) \ ad_{\tilde{\alpha}(Y)} \cdot ( ad_{X} \cdot  z) .   \end{equation}
Moreover,  the quadruple $(\mathcal{L},[\cdot,\cdot]_\alpha,\varepsilon,\tilde{\alpha})$ is a Hom-Leibniz color algebra.
\end{prop}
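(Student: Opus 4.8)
The plan is to prove the operator identity (\ref{map-ad}) first, straight from the $\varepsilon$-$n$-Hom-Jacobi identity, and then to bootstrap it to the $\varepsilon$-Hom-Leibniz identity for $[\cdot,\cdot]_\alpha$, carrying out the whole computation on homogeneous decomposable elements $X=x_1\wedge\cdots\wedge x_{n-1}$, $Y=y_1\wedge\cdots\wedge y_{n-1}$, $Z=z_1\wedge\cdots\wedge z_{n-1}$ and extending by (bi)linearity.

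For (\ref{map-ad}) I would start from $ad_{\tilde\alpha(X)}\cdot(ad_Y\cdot z)=[\alpha(x_1),\ldots,\alpha(x_{n-1}),[y_1,\ldots,y_{n-1},z]]$ and apply (\ref{identity}) to it, reading the inner bracket as the $n$-tuple $(y_1,\ldots,y_{n-1},z)$. This produces $n$ summands. In the first $n-1$ of them the operator $ad_X=[x_1,\ldots,x_{n-1},-]$ acts on $y_i$ while the other entries become $\alpha(y_1),\ldots,\alpha(y_{n-1}),\alpha(z)$, with attached bicharacter factor $\varepsilon(X,y_1+\cdots+y_{i-1})=\varepsilon(X,Y_i)$; by the definitions (\ref{adjoint}) and (\ref{crochet-leibniz}) these $n-1$ summands add up exactly to $ad_{[X,Y]_\alpha}\cdot\alpha(z)$. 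In the last summand $ad_X$ acts on $z$, with factor $\varepsilon(X,y_1+\cdots+y_{n-1})=\varepsilon(X,Y)$, so it equals $\varepsilon(X,Y)\,ad_{\tilde\alpha(Y)}\cdot(ad_X\cdot z)$. Rearranging is precisely (\ref{map-ad}); only (\ref{identity}) is used here.

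Next, to establish the $\varepsilon$-Hom-Leibniz identity $[\tilde\alpha(X),[Y,Z]_\alpha]_\alpha-\varepsilon(X,Y)[\tilde\alpha(Y),[X,Z]_\alpha]_\alpha=[[X,Y]_\alpha,\tilde\alpha(Z)]_\alpha$, I would expand the right-hand side by (\ref{crochet-leibniz}) using $\tilde\alpha(Z)=\alpha(z_1)\wedge\cdots\wedge\alpha(z_{n-1})$, and then replace $ad_{[X,Y]_\alpha}\cdot\alpha(z_j)$ in each slot by $ad_{\tilde\alpha(X)}\cdot(ad_Y\cdot z_j)-\varepsilon(X,Y)\,ad_{\tilde\alpha(Y)}\cdot(ad_X\cdot z_j)$ via (\ref{map-ad}); this writes the right-hand side as two families of wedges, one involving the $ad_{\tilde\alpha(X)}\cdot(ad_Y\cdot z_j)$ and one, carrying the overall sign $-\varepsilon(X,Y)$, involving the $ad_{\tilde\alpha(Y)}\cdot(ad_X\cdot z_j)$. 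On the left I would insert (\ref{crochet-leibniz}) twice; each double bracket expands into $(n-1)^2$ summands, which I sort into the $n-1$ \emph{diagonal} ones (outer adjoint action landing on the slot already modified by the inner one) and the $(n-1)(n-2)$ \emph{off-diagonal} ones. The diagonal summands of $[\tilde\alpha(X),[Y,Z]_\alpha]_\alpha$ reproduce the first family and those of $-\varepsilon(X,Y)[\tilde\alpha(Y),[X,Z]_\alpha]_\alpha$ the second, once one checks — using (\ref{condi2-bicharacter})--(\ref{condi3-bicharacter}), the degree-preservation of $\alpha$, and the fact that $[X,Y]_\alpha$ is homogeneous of degree equal to the sum of the degrees of $X$ and $Y$ — that all the bicharacter coefficients coincide. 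The off-diagonal summands appear only on the left; I would cancel them in pairs, matching the term of $[\tilde\alpha(X),[Y,Z]_\alpha]_\alpha$ with $\alpha(ad_Y\cdot z_i)$ in slot $i$ and $ad_{\tilde\alpha(X)}\cdot\alpha(z_k)$ in slot $k$ against the term of $\varepsilon(X,Y)[\tilde\alpha(Y),[X,Z]_\alpha]_\alpha$ with $ad_{\tilde\alpha(Y)}\cdot\alpha(z_i)$ in slot $i$ and $\alpha(ad_X\cdot z_k)$ in slot $k$: these enter the Leibniz expression with opposite signs, have equal bicharacter coefficients by (\ref{condi1-bicharacter})--(\ref{condi3-bicharacter}), and have identical wedges once $\alpha$ is commuted past the bracket (multiplicativity of $\alpha$; for $n=2$ there are no off-diagonal summands and nothing beyond (\ref{identity}) is needed). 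Hence both sides agree and $(\mathcal{L},[\cdot,\cdot]_\alpha,\varepsilon,\tilde\alpha)$ is a Hom-Leibniz color algebra.

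The step I expect to be the real obstacle is this last one: keeping the $2(n-1)^2$ left-hand summands under control, splitting them into the diagonal part (compared against the right-hand side through (\ref{map-ad})) and the off-diagonal part (cancelled pairwise), and verifying each time that the bicharacter factors line up. It is a mechanical but lengthy piece of bookkeeping with the relations (\ref{condi1-bicharacter})--(\ref{condi3-bicharacter}); the conceptual content is entirely contained in the identity (\ref{map-ad}).
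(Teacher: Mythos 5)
Your proposal is correct and follows essentially the same route as the paper: it derives \eqref{map-ad} directly from the $\varepsilon$-$n$-Hom-Jacobi identity \eqref{identity} (the step the paper dismisses as easy), then verifies \eqref{LeibnizId2} by expanding the iterated brackets, cancelling the off-diagonal cross terms pairwise and matching the diagonal terms against the expansion of $[[X,Y]_\alpha,\tilde{\alpha}(Z)]_\alpha$ via \eqref{map-ad}. Your explicit appeal to multiplicativity of $\alpha$ in the off-diagonal cancellation is the same (implicit) use the paper makes when it rewrites $ad_{\tilde{\alpha}(X)}\cdot\alpha(z_j)$ as $\alpha(ad_X\cdot z_j)$, so the two arguments coincide in substance.
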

\begin{proof}
It is easy to show that the identity \eqref{identity} is equivalent to  \eqref{map-ad}. Let $X , Y,Z  \in \mathcal{H}(\mathcal{L})$,  the $\varepsilon$-Hom-Leibniz identity \eqref{LeibnizId1} can be written by the bracket $[\cdot,\cdot]_\alpha$ and the twist $\tilde{\alpha}$ as
\begin{align}\label{LeibnizId2} [\tilde{\alpha}(X),[Y,Z]_\alpha]_\alpha-\varepsilon(X,Y)[\tilde{\alpha}(Y),[X,Z]_\alpha]_\alpha=[[X,Y]_\alpha,\tilde{\alpha} (Z)]_\alpha.
\end{align}
Then, we have
\begin{align*}
   &[\tilde{\alpha}(X),[Y,Z]_\alpha]_\alpha \\
    =&\sum_{i=1}^{n-1} \sum_{j <i}^{n-1}\varepsilon(X,Z_j)\varepsilon(Y,Z_i)  (\alpha^2(z_1),\ldots, \alpha(ad_X z_j),\ldots,\alpha(ad_Y z_i),\ldots,\alpha^2(z_{n-1}))\\
    &+\sum_{i=1}^{n-1} \sum_{j >i}^{n-1}\varepsilon(X,Y+Z_j)\varepsilon(Y,Z_i)  (\alpha^2(z_1),\ldots, \alpha(ad_X z_i),\ldots,\alpha(ad_Y z_j),\ldots,\alpha^2(z_{n-1}))\\
    &+\sum_{i=1}^{n-1} \varepsilon(X+Y,Z_i) (\alpha^2(z_1),\ldots,  ad_{\tilde{\alpha}(X)} \cdot ad_y  z_i),\ldots,\alpha^2(z_{n-1})).
\end{align*}
Using the $\varepsilon$-skew-symmetry in $X$ and $Y$, we obtain
\begin{align*}
   & [\tilde{\alpha}(Y),[X,Z]_\alpha]_\alpha\\
    =&\sum_{i=1}^{n-1} \sum_{j <i}^{n-1}\varepsilon(Y,Z_j)\varepsilon(X,Z_i)  (\alpha^2(z_1),\ldots, \alpha(ad_Y z_j),\ldots,\alpha(ad_X z_i),\ldots,\alpha^2(z_{n-1}))\\
    &+\sum_{i=1}^{n-1} \sum_{j >i}^{n-1}\varepsilon(Y,X)\varepsilon(Y,Z_j)\varepsilon(X,Z_i)  (\alpha^2(z_1),\ldots, \alpha(ad_Y z_i),\ldots,\alpha(ad_X z_j),\ldots,\alpha^2(z_{n-1}))\\
    &+\sum_{i=1}^{n-1} \varepsilon(X+Y,Z_i) (\alpha^2(z_1),\ldots,  ad_{\tilde{\alpha}(Y)} \cdot ad_X z_i),\ldots,\alpha^2(z_{n-1})).
\end{align*}
So, the right hand side of Eq.\eqref{LeibnizId2} is equal to
\begin{align}
    &\sum_{i=1}^{n-1} \varepsilon(X+Y,Z_i) \Big(\alpha^2(z_1),\ldots, ( ad_{\tilde{\alpha}(X)} \cdot ad_y  z_i)-\varepsilon(X,Y) ad_{\tilde{\alpha}(Y)} \cdot ad_X z_i)),\ldots,\alpha^2(z_{n-1})\Big).
\end{align}
The left hand side of Eq.\eqref{LeibnizId2} is equal to
\begin{align*}
    &[[X,Y]_\alpha,\tilde{\alpha} (Z)]_\alpha \\
   = & \sum_{i=1}^{n-1} \sum_{j =1}^{n-1}\varepsilon(X,Y_j)\varepsilon(X,Z_i)\varepsilon(Y,Z_i)  (\alpha^2(z_1),\ldots, [\alpha(y_1),\ldots,ad_X z_j,\ldots,\alpha( y_{n-1}),\alpha(z_i)],\ldots,\alpha^2(z_{n-1}))\\
   =& \sum_{i=1}^{n-1} \varepsilon(X+Y,Z_i) \Big( \alpha^2(z_1),\ldots,\alpha^2(z_{i-1}), ad_{[X,Y]_\alpha} \alpha(z_i),\ldots,  \alpha^2(z_{n-1})        \Big)
\end{align*}
by Eq. \eqref{map-ad},  the $\varepsilon$-Hom-Leibniz identity holds.
\end{proof}

\subsection{Representations of $n$-Hom-Lie color algebras}
In this subsection, we introduce the notion of representation of $n$-Hom-Lie color algebras, which is a generalization of representations of $n$-Hom-Lie  superalgebras $($see in \cite{Cohomology-super}$)$ in the $\Gamma$-graded case. In the sequel, we consider only multiplicative $n$-Hom-Lie color algebras. 
\begin{df}
A \textbf{representation} of a $n$-Hom-Lie   color  algebra    $\mathcal{G}=(\mathfrak{g},[\cdot ,\cdots,\cdot ],\varepsilon,\alpha)$
on a $\Gamma$-graded space $M$ is a  $\varepsilon$-skew-symmetric linear map of degree zero $\rho:\bigwedge^{n-1} \mathfrak{g}\longrightarrow End(M)$, and $\mu$  an   endomorphism on $M$
satisfying  for $X=(x_1,\ldots,x_{n-1}),Y= (y_1,\ldots,y_{n-1})\in \mathcal{H}(\bigwedge^{n-1} \mathfrak{g})$ and $x_n\in \mathcal{H}(\mathfrak{g})$
\small{\begin{align}
   \rho(\tilde{\alpha}(X)) \circ \mu = \mu \circ \rho(X),
\end{align}}\small{
\begin{align} \nonumber
&\rho(\alpha (x_{1}),\cdots, \alpha(x_{n-1}))\circ\rho(y_1,\ldots,y_{n-1})-\varepsilon(X,Y)\rho(\alpha (y_{1}),\cdots, \alpha(y_{n-1}))\circ\rho(x_1,\ldots,x_{n-1}) \\
&\label{RepIdentity1}\quad\quad\quad\quad\quad=\sum_{i=1}^{n-1}\varepsilon(X,Y_i)\rho(\alpha (y_{1}),\cdots,\alpha (y_{i-1}),ad_X(y_i),\cdots, \alpha(y_{n-1}))\circ \mu,
\end{align}}\small{
\begin{align} \nonumber
    &\rho\big{ (}[x_1,x_2,\cdots,x_n],\alpha(y_1),\alpha(y_2),\cdots,\alpha(y_{n-2})\big{)} \circ \mu =  \\
     \label{RepIdentity2}&\quad\quad\quad\quad\quad\quad\quad\sum_{i=1}^{n} (-1)^{n-i}
    \varepsilon(x_i,X^i) \rho(\alpha(x_{1)},\cdots,\hat{x_i},\cdots,\alpha(x_n)) \rho(x_i, y_1,y_2,\cdots,y_{n-2}).
\end{align}}
We denote this representation by the triple $(M,\rho,\mu)$.\end{df}\begin{rems} \
\begin{enumerate}\item  In term of fundamental object  on $\mathcal{L}$ and the bracket of Hom-Leibniz color algebra defined in \eqref{crochet-leibniz}. The condition \eqref{RepIdentity1} can be written as
\begin{align} \label{identity-rep3}
&\rho(\tilde{\alpha}(X))\circ\rho(Y)-\varepsilon(X,Y)\rho(\tilde{\alpha}(Y))\circ\rho(X) =\rho([X,Y]_\alpha)\circ \mu.
\end{align}
  \item
Two representations $(M,\rho,\mu )$ and $(M',\rho',\mu' )$ of a $n$-Hom-Lie color algebra $\mathcal{G}$ are \emph{equivalent} if there exists $f:M \rightarrow M' $, an isomorphism of $\Gamma$-graded vector space of degree zero, such that $f(\rho(X) m)=\rho '(X) f(m)$ and $f\circ \mu =\mu' \circ f$ for $X\in \mathcal{H} (\mathcal{L})$, $m\in M$ and $m'\in M'$.
\item If $\alpha=id_\mathfrak{g}$ and $\mu=id_M$, we recover  representations of $n$-Lie color algebras.
\end{enumerate}
\end{rems}
\begin{exa}Let $(\mathfrak{g},[\cdot ,\cdots,\cdot ],\varepsilon,\alpha)$ be a multiplicative  $n$-Hom-Lie color  algebra. The map $ad$  defined in Eq. \eqref{adjoint} is a representation on $\mathfrak{g}$ where the endomorphism  $\mu$ is the twist map $\alpha$. The identity \eqref{RepIdentity1} is equivalent to $n$-Hom-Jacobi Eq. \eqref{identity}. It is called the \textbf{adjoint representation}.
\end{exa}

\begin{prop}\label{tri}
Let $(\mathfrak{g},[\cdot,\cdots,\cdot],\varepsilon,\alpha)$ be a multiplicative  $n$-Hom-Lie color algebra. Then $(M,\rho,\mu)$ is a representation of $\mathfrak{g}$ if and only if $(\mathfrak{g}\oplus M,[\cdot,\ldots,\cdot]_{\rho},\varepsilon,\tilde{\alpha})$ is a  multiplicative $n$-hom-Lie color algebra with the bracket operation $[\cdot,\ldots,\cdot]_{\rho}:\wedge^n(\mathfrak{g}\oplus M)\longrightarrow\mathfrak{g}\oplus M$ defined by:
$$[x_1+m_1,\ldots,x_n+m_n]_{\rho}=[x_1,\ldots,x_n]+\sum_{i=1}^{n}(-1)^{n-i}  \varepsilon(x_i,X^i)\rho(x_1,\ldots,\widehat{x}_i,\ldots,x_n)m_i$$
and  the  linear map of degree zero $\alpha_{\mathfrak{g}\oplus M}:\mathfrak{g}\oplus M\longrightarrow \mathfrak{g}\oplus M$ given by : $$\alpha_{\mathfrak{g}\oplus M}(x+m)=\alpha(x)+\mu(m)$$
for all  $x_i\in \mathcal{H} (\mathfrak{g})$ and $m_i \in M$, $(i\in \{1,\ldots,n\})$. It is called the { semi-direct product} of the  $n$-Hom-Lie color algebra
$(\mathfrak{g},[\cdot,\cdots,\cdot],\varepsilon,\alpha)$ by the representation $(M,\rho,\mu)$ denoted by $\mathfrak{g}\ltimes_\rho^{\alpha,\mu} M$.
 Note that $\mathfrak{g}\oplus M$ is a $\Gamma$-graded space, where $(\mathfrak{g}\oplus M)_\gamma=\mathfrak{g}_\gamma\oplus M_\gamma$ implying that  $x+m \in \mathcal{H}(\mathfrak{g}\oplus M)$ then $\overline{x_i+m_i}=\overline{x_i}=\overline{m_i}$.
\end{prop}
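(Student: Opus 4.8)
The plan is to verify directly the four axioms of a multiplicative $n$-Hom-Lie color algebra for $(\mathfrak{g}\oplus M,[\cdot,\ldots,\cdot]_{\rho},\varepsilon,\alpha_{\mathfrak{g}\oplus M})$ and to observe that, when one decomposes each axiom along the direct sum $\mathfrak{g}\oplus M$, the $\mathfrak{g}$-component is automatic from the structure on $\mathfrak{g}$ while the $M$-component is equivalent to the defining identities of a representation; reading the computation backwards yields the converse. First I would check that $[\cdot,\ldots,\cdot]_{\rho}$ is a well-defined degree-zero multilinear map: since $\overline{x_i+m_i}=\overline{x_i}=\overline{m_i}$, each summand $\rho(x_1,\ldots,\widehat{x}_i,\ldots,x_n)m_i$ is homogeneous of the same degree as $[x_1,\ldots,x_n]$, so the bracket respects the grading. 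Next, for the $\varepsilon$-skew-symmetry \eqref{ColorSkewSym}: the $\mathfrak{g}$-part is skew by hypothesis, and for the $M$-part one uses the $\varepsilon$-skew-symmetry of $\rho$ together with the sign $(-1)^{n-i}\varepsilon(x_i,X^i)$; swapping two adjacent arguments $x_j,x_{j+1}$ interchanges the roles of the $j$th and $(j+1)$th summands and, with the help of \eqref{eq-antisymetric} applied to $\rho$, produces exactly the prefactor $-\varepsilon(x_j,x_{j+1})$.

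For multiplicativity, the identity $\alpha_{\mathfrak{g}\oplus M}\circ[\cdot,\ldots,\cdot]_{\rho}=[\alpha_{\mathfrak{g}\oplus M}(\cdot),\ldots,\alpha_{\mathfrak{g}\oplus M}(\cdot)]_{\rho}$ splits into $\alpha[x_1,\ldots,x_n]=[\alpha(x_1),\ldots,\alpha(x_n)]$, which holds since $\mathfrak{g}$ is multiplicative, and $\mu\bigl(\rho(x_1,\ldots,\widehat{x}_i,\ldots,x_n)m_i\bigr)=\rho(\alpha(x_1),\ldots,\widehat{\alpha(x_i)},\ldots,\alpha(x_n))\mu(m_i)$, which is precisely the compatibility $\rho(\tilde{\alpha}(X))\circ\mu=\mu\circ\rho(X)$. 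So far each axiom is equivalent to the corresponding half of the representation data.

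The substantial step is the $\varepsilon$-$n$-Hom-Jacobi identity \eqref{identity} for $[\cdot,\ldots,\cdot]_{\rho}$ evaluated on elements $x_i+m_i$ and $y_j+n_j$. I would expand $\bigl[\tilde{\alpha}(x_1+m_1),\ldots,\tilde{\alpha}(x_{n-1}+m_{n-1}),[y_1+n_1,\ldots,y_n+n_n]_{\rho}\bigr]_{\rho}$ and the right-hand sum, and project onto the two factors. The projection onto $\mathfrak{g}$ is the $\varepsilon$-$n$-Hom-Jacobi identity of $\mathfrak{g}$ itself, which holds. The projection onto $M$ produces a sum of terms sorted by which $M$-slot sits in the innermost position: the terms in which the inner slot carries one of the $n_j$ give, after using $\rho(\tilde{\alpha}(X))\circ\mu=\mu\circ\rho(X)$ to move $\tilde{\alpha}$ past the $M$-components, exactly identity \eqref{RepIdentity1} in its fundamental-object form \eqref{identity-rep3}; the terms in which the inner slot carries one of the $m_i$ reproduce, after reindexing the hat and the bicharacter factors, identity \eqref{RepIdentity2}. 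Conversely, assuming \eqref{RepIdentity1}, \eqref{RepIdentity2} and the $\mu$-compatibility, the same computation run in reverse assembles the Jacobi identity for $[\cdot,\ldots,\cdot]_{\rho}$.

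The main obstacle is purely combinatorial bookkeeping: controlling the bicharacter factors $\varepsilon(X,Y_i)$, $\varepsilon(x_i,X^i)$ and the signs $(-1)^{n-i}$ as the inner bracket $[y_1,\ldots,y_n]$ is inserted into slot $i$ and the omitted argument $\widehat{x}_i$ is transported past the $\rho$-arguments, using the conventions $\varepsilon(x,X_i)$, $\varepsilon(x,X^i)$, $\varepsilon(x,X_i^j)$ fixed in Section~\ref{basics}. One must also apply the multi-argument $\varepsilon$-skew-symmetry \eqref{eq-antisymetric} of $\rho$ with the correct signs each time two slots are transposed. Once the $M$-valued terms are grouped by the innermost $M$-component, the identification with \eqref{RepIdentity1} and \eqref{RepIdentity2} is forced and no further idea is required.
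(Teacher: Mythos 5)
Your proposal is correct and follows essentially the same route as the paper: expand both sides of the $\varepsilon$-$n$-Hom-Jacobi identity for $[\cdot,\ldots,\cdot]_{\rho}$ on $\mathfrak{g}\oplus M$, split along the direct sum, and identify the $M$-components with the representation identities \eqref{RepIdentity1} and \eqref{RepIdentity2} (with the $\mu$-compatibility accounting for multiplicativity), reading the computation in both directions for the equivalence.
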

\begin{proof}
It is easy to show that $[\cdot,\ldots,\cdot]_{\rho}$ is  $\varepsilon$-skew-symmetric using the $\varepsilon$-skew-symmetry of  $[\cdot, \cdots, \cdot]$ and $\rho$. Let $x_1+v_1,\ldots,x_{n-1}+v_{n-1}$ and $y_1+w_1,\ldots,y_{n}+w_{n} \in \mathcal{H}(\mathfrak{g}\oplus M)$,  the identity \eqref{identity} is give in term of $[\cdot,\ldots,\cdot]_{\rho}$ and $\alpha_{\mathfrak{g}\oplus M}$ by
{\small\begin{align}\label{identityrho}
   &[\alpha_{\mathfrak{g}\oplus M}(x_1+v_1) , \ldots, \alpha_{\mathfrak{g}\oplus M}(x_{n-1}+v_{n-1}), [y_1+w_1,\ldots,y_n+w_n]_\rho]_\rho=\\
   \nonumber &\sum_{i=1}^n \varepsilon(X,Y_i)  [\alpha_{\mathfrak{g}\oplus M}(y_1+w_1) , \ldots,\alpha_{\mathfrak{g}\oplus M}(y_{i-1}+w_{i-1}), [x_1+v_1,\ldots,x_{n-1}+v_{n-1},y_i+w_i]_\rho,\ldots,\alpha_{\mathfrak{g}\oplus M}(y_n+w_n) ]_\rho.
\end{align}}
 The left-hand side of \eqref{identityrho} is equal to
\begin{align*}
   & [\alpha(x_1) , \ldots, \alpha(x_{n-1}), [y_1,\ldots,y_n]]\\ +& \sum_{i=1}^{n-1} (-1)^{n-i} \varepsilon(x_i, X^i +Y ) \rho(\alpha(x_1),\ldots,\hat{x_i},\ldots,\alpha(x_{n-1}),[y_1,\ldots,y_n]) \mu (v_i)\\
   +&\sum_{i=1}^n (-1)^{n-i}\varepsilon(y_i,Y^i) \rho(\alpha(x_1),\ldots,\alpha(x_n)) \rho(y_1,\ldots,\hat{y_i},\ldots,y_n)w_i
\end{align*}
The right-hand side of \eqref{identityrho}, for a fixed  $i \in \{1,\ldots,n\}$, is equal to :
\begin{align*}
    &\varepsilon(X,Y_i) [\alpha(y_1),\ldots, \alpha(y_{i-1}),[x_1,\ldots,y_i],\ldots,\alpha(y_n)] \\
    +& \sum_{j< i}(-1)^{n-j}\varepsilon(X,Y_i) \varepsilon(y_j,X+Y^j) \rho(\alpha(y_1),\ldots,\hat{y_j},\ldots,\alpha(y_{i-1}),[x_1,\ldots, x_{n-1},y_i],\ldots,\alpha(y_n)) \mu(w_j)\\
    +&  \sum_{j >i} (-1)^{n-j} \varepsilon(X,Y_i) \varepsilon(y_j, Y^j ) \rho(\alpha(y_1),\cdots,\alpha(y_{i-1}),[x_1,\ldots,x_{n-1},y_i],\ldots,\hat{y_j},\ldots,\alpha(y_n)) \mu(w_j)\\
   +&  \sum_{j=1}^{n-1} (-1)^{i+j}\varepsilon(X,Y_i)  \varepsilon(X+y_i,Y^i) \varepsilon (x_j,X^j+y_i) \rho(\alpha(y_1),\ldots,\hat{y_i},\ldots,\alpha(y_n)) \rho(x_1,\ldots,\hat{x_j},\ldots, x_{n-1},y_i) v_j\\
   + &  (-1)^{n-i}\varepsilon(X,Y_i)  \varepsilon(X+y_i,Y^i) \rho(\alpha(y_1),\ldots,\hat{y_i},\ldots,\alpha(y_n)) \rho(x_1,\ldots,x_{n-1}) w_i.
\end{align*}
Then $(M,\rho,\mu)$ is a representation of the multiplicative  $n$-Hom-Lie color algebra $(\mathfrak{g},[\cdot,\cdots,\cdot],\varepsilon,\alpha)$, if and only if, $(\mathfrak{g}\oplus M,[\cdot,\ldots,\cdot]_{\rho},\varepsilon,\alpha_{\mathfrak{g}\oplus M})$ is a multiplicative $n$-Hom-Lie color algebra.
\end{proof}
\begin{prop}
Let $(\mathfrak{g},[\cdot,\ldots,\cdot],\varepsilon)$ be a  $n$-Lie color algebra, $(M,\rho)$ be a representation of $\mathfrak{g}$, $\alpha : \mathfrak{g } \rightarrow \mathfrak{g}$ be an algebra  morphism and $\mu : M \to M $ be a linear map of degree zero  such that
$
   \rho(\tilde{\alpha}(X)) \circ \mu = \mu \circ \rho(X)
$, for all $X\in \mathcal{L}$.
Then $(M,\tilde{\rho}=\mu\circ\rho,\mu)$ is a representation of the multiplicative $n$-Hom-Lie color algebra  $(\mathfrak{g},[\cdot,\ldots,\cdot]_\alpha,\varepsilon, \alpha)$ given in Corollary \ref{twist}.
\end{prop}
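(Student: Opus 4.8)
The plan is to verify directly that the triple $(M,\tilde\rho=\mu\circ\rho,\mu)$ satisfies the three defining axioms of a representation of the $n$-Hom-Lie color algebra $(\mathfrak g,[\cdot,\ldots,\cdot]_\alpha,\varepsilon,\alpha)$ obtained from Corollary \ref{twist}. Recall that in this twisted algebra the bracket is $[\cdot,\ldots,\cdot]_\alpha=\alpha\circ[\cdot,\ldots,\cdot]$, the twist is $\alpha$ itself, and the fundamental adjoint map becomes $ad^\alpha_X\cdot y=\alpha[x_1,\ldots,x_{n-1},y]$, while $\widetilde\alpha$ acts on $\mathcal L$ as before by $\widetilde\alpha(X)=\alpha(x_1)\wedge\cdots\wedge\alpha(x_{n-1})$. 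I would first record these translations and the compatibility hypothesis $\rho(\widetilde\alpha(X))\circ\mu=\mu\circ\rho(X)$, together with the fact that $\mu$ commutes with $\rho$ only after twisting $X$; this last point is what forces the extra $\mu$ factor in $\tilde\rho$.

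The first axiom, $\tilde\rho(\widetilde\alpha(X))\circ\mu=\mu\circ\tilde\rho(X)$, is immediate: $\tilde\rho(\widetilde\alpha(X))\circ\mu=\mu\circ\rho(\widetilde\alpha(X))\circ\mu=\mu\circ\mu\circ\rho(X)=\mu\circ\tilde\rho(X)$, using the hypothesis twice. For the second axiom \eqref{RepIdentity1} (equivalently \eqref{identity-rep3}), I would substitute $\tilde\rho=\mu\circ\rho$ everywhere, pull the outer $\mu$'s through using the hypothesis and the fact that $(\mathfrak g,[\cdot,\ldots,\cdot],\varepsilon)$ being an $n$-Lie color algebra with representation $(M,\rho)$ satisfies the untwisted version of \eqref{RepIdentity1} (with $\mu=\mathrm{id}$, $\alpha=\mathrm{id}$); the point is that each occurrence of $\alpha$ applied to an argument of $\rho$ on the left-hand side, once the $\mu$'s are collected, is matched by the corresponding $\alpha$-twisted entry in $ad^\alpha_X$, and the composition $\mu\circ\mu$ produced on the left reproduces the single $\mu$ multiplying the right-hand term after one more application of the commutation rule. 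The third axiom \eqref{RepIdentity2} is handled the same way: one writes out $\tilde\rho([x_1,\ldots,x_n]_\alpha,\alpha(y_1),\ldots,\alpha(y_{n-2}))\circ\mu$, uses $[x_1,\ldots,x_n]_\alpha=\alpha[x_1,\ldots,x_n]$ and the commutation of $\mu$ with $\rho\circ\widetilde\alpha$ to convert this to $\mu^2\circ\rho([x_1,\ldots,x_n],y_1,\ldots,y_{n-2})$-type terms, then invokes the untwisted identity \eqref{RepIdentity2} for $(M,\rho)$ and re-groups the $\mu$'s on the right so each factor $\rho(\alpha(x_1),\ldots,\widehat{x_i},\ldots,\alpha(x_n))\rho(x_i,\ldots)$ becomes $\mu\rho(\ldots)\mu\rho(\ldots)=\tilde\rho(\ldots)\tilde\rho(\ldots)$ after absorbing twists.

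The main obstacle is purely bookkeeping: tracking where the twist maps $\alpha$ land inside the arguments of $\rho$ and making sure the number of $\mu$'s on each side balances, since $\tilde\rho$ carries one $\mu$ and each axiom has a distinguished $\circ\mu$ on one side. The key identity making this work is $\mu\circ\rho(X)=\rho(\widetilde\alpha(X))\circ\mu$, applied repeatedly to migrate every $\mu$ past a $\rho$ at the cost of $\alpha$-twisting that $\rho$'s arguments — which is exactly the twisting needed so that the untwisted $n$-Lie color representation identities for $(M,\rho)$ become the Hom-twisted identities for $(M,\tilde\rho,\mu)$. Since no genuinely new algebraic input is required beyond Corollary \ref{twist} and the given representation axioms, I would state the verification of \eqref{RepIdentity1} in a line or two and remark that \eqref{RepIdentity2} is checked analogously, rather than writing every term.
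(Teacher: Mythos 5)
Your proposal is correct and takes essentially the same route as the paper: substitute $\tilde{\rho}=\mu\circ\rho$, migrate each $\mu$ past a $\rho$ via the hypothesis $\rho(\tilde{\alpha}(X))\circ\mu=\mu\circ\rho(X)$ (thereby $\alpha$-twisting that $\rho$'s arguments), and invoke the untwisted $n$-Lie color representation identities for $(M,\rho)$; the paper likewise writes out only one of the two conditions in full (it chooses \eqref{RepIdentity2}, producing the same $\mu^2\circ\rho(\ldots)\rho(\ldots)\rightarrow\tilde{\rho}(\ldots)\tilde{\rho}(\ldots)$ regrouping you describe) and declares the other analogous. Only a trivial slip: the axiom $\tilde{\rho}(\tilde{\alpha}(X))\circ\mu=\mu\circ\tilde{\rho}(X)$ needs the hypothesis just once, not twice.
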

\begin{proof}It is easy to verify that,
for  $X=(x_1,\ldots,x_{n-1}),Y= (y_1,\ldots,y_{n-1})\in \mathcal{H}(\mathcal{L})$ and  $x_n,y_n \in \mathcal{H}(\mathfrak{g})$,
 \begin{align*}
     \tilde{\rho}(\tilde{\alpha}(X)) \circ \mu  = \mu \circ \tilde{\rho}(X).
 \end{align*}
Since $(M,\rho)$ is a representation of $\mathfrak{g}$, then we have
 \begin{align*}
     &\tilde{\rho}([x_1,\ldots,x_n]_\alpha,\alpha(y_{1}),\ldots,\alpha(y_{n-2})) \circ \mu \\
     = & \mu  \circ\tilde{\rho}([x_1,\ldots,x_n],y_{1},\ldots,y_{n-2}) \\
      =& \sum_{i=1}^n (-1)^{n-1} \varepsilon(x_i,X^i ) \mu^2  \circ \rho(x_1,\ldots,\hat{x_i},\ldots,x_n) \rho(x_i,y_1,\ldots,y_n)\\
       =& \sum_{i=1}^n (-1)^{n-1} \varepsilon(x_i,X^i )\mu  \circ \rho(\alpha(x_1),\ldots,\hat{x_i},\ldots,\alpha(x_n)) \mu \circ \rho(x_i,y_1,\ldots,y_n) \\
        =& \sum_{i=1}^n (-1)^{n-1} \varepsilon(x_i,X^i )  \tilde{\rho}(\alpha(x_1),\ldots,\hat{x_i},\ldots,\alpha(x_n)) \tilde{\rho}(x_i,y_1,\ldots,y_n).
     \end{align*}

    Thus,  Condition  \eqref{RepIdentity2} holds. Similarly, Eq. \eqref{RepIdentity1} is valid for $\tilde{\rho}$.
   Then $(M,\tilde{\rho},\mu)$ is a representation of the multiplicative $n$-Hom-Lie color algebra  $(\mathfrak{g},[\cdot,\ldots,\cdot]_\alpha,\varepsilon, \alpha)$.
\end{proof}
\section{Cohomology of  $n$-Hom-Lie  color  algebras}\label{cohomology}
In this Section, we study  a cohomology of multiplicative $n$-Hom-Lie color algebras with respect to a given  representation.
Let $(\mathfrak{g},[\cdot ,\cdots,\cdot ],\varepsilon,\alpha)$ be a    $n$-Hom-Lie color algebra  and $(M,\rho,\mu)$  a $\mathcal{L}(\mathfrak{g})$-modules.
A $p$-cochain is a  $\varepsilon$-skew-symmetric multilinear  map
 $
  \varphi:\underbrace{\mathcal{L}(\mathfrak{g})\otimes \cdots \otimes \mathcal{L}(\mathfrak{g})}_{p-1}\wedge \mathfrak{g} \longrightarrow M$,
such that 
\begin{equation}
\mu\circ\varphi(X_1,\cdots ,X_p,z)=\varphi(\tilde{\alpha}(X_1),\cdots,\tilde{\alpha}(X_p),\alpha(z)).\end{equation}
The space  of all $p$-cochains is $\Gamma$-graded and is denoted by $\mathcal{C}^p_{\alpha,\mu}(\mathfrak{g},M)$.

Thus, we can define the coboundary operator  of the cohomology of a $n$-Hom-Lie color algebra $\mathfrak{g}$ with coefficients in M
by using the structure of its induced Hom-Leibniz color algebra  as
follows.

  \begin{df}We call, for $p\geq 1$, a $p$-coboundary operator of a  $n$-Hom-Lie color  algebra $(\mathfrak{g},[\cdot,\ldots,\cdot ],\varepsilon,\alpha)$, a linear map $\delta^p:\mathcal{C}^p_{\alpha,\mu}(\mathfrak{g},M)\rightarrow \mathcal{C}^{p+1}_{\alpha,\mu}(\mathfrak{g},M)$ defined by:
\begin{align}
                                    &\label{Nambucohomo}\delta^p\varphi(X_1,...,X_p,z) \\=&\sum_{1\leq i< j}^{p}(-1)^i\varepsilon(X_i,X_{i+1}+...+X_{j-1})\varphi\big(\tilde{\alpha}(X_1),...,\widehat{\tilde{\alpha}(X_i)},...,
                                  \tilde{\alpha}(X_{j-1}),[X_i,X_j]_\alpha,...,\tilde{\alpha}(X_{p}),\alpha(z)\big)\nonumber\\
                                    &+ \sum_{i=1}^{p}(-1)^{i}\varepsilon(X_i,X_{i+1}+...+X_{p})\varphi\big(\tilde{\alpha}(X_1),...,\widehat{\tilde{\alpha}(X_i)},...,
                                    \tilde{\alpha}(X_{p}),\textrm{ad}(X_i)(z)\big) \nonumber\\
                                    &+ \sum_{i=1}^{p}(-1)^{i+1}\varepsilon(\varphi+X_{1}+...+X_{i-1},X_i)\rho(\tilde{\alpha}^{p-1}(X_i))\Big( \varphi\big(X_1,...,\widehat{X_i},...,
                                  X_{p},z\big)\Big)\nonumber\\
                                  &+(-1)^{p+1} \big(\varphi(X_1,...,X_{p-1},\ )\cdot X_{p}\big)\bullet_{\alpha}\alpha^p(z)\nonumber,
                                \end{align}
  where
 \begin{align}
 \nonumber&\big(\varphi(X_1,...,X_{p-1},\ )\cdot X_{p}\big)\bullet_{\alpha}\alpha^p(z)\\ \nonumber
  &=\dl\sum_{i=1}^{n-1} (-1)^{n-i}\varepsilon(\varphi+X_{1}+...+X_{p-1},x_{p}^1+...+\hat{x}_{p}^i+\ldots+x_{p}^{n-1}+z ) \varepsilon (x_{p}^i,x_{p}^{i+1}+ \ldots+x_{p}^{n-1}+z)\\
  &\rho(\alpha^{p-1}(x_{p}^1),...,\alpha^{p-1}(x_{p}^{n-1}),\alpha^{p-1}(z))\varphi(X_1,...,X_{p-1},x_{p}^i )
  \end{align}
  for $X_i=(x_i^j)_{1\leq j\leq n-1}\in \mathcal{H}(\mathcal{L}(\mathfrak{g})),\ 1\leq i\leq p$ and $z\in \mathcal{H}(\mathfrak{g})$.
  \end{df}

  \begin{prop}\label{opercob}Let $\varphi\in \mathcal{C}^p_{\alpha,\mu}(\mathfrak{g},M)$ be a $p$-cochain then
  $$\delta^{p+1}\circ\delta^p(\varphi)=0.$$
  \end{prop}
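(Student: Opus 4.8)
The plan is to prove $\delta^{p+1}\circ\delta^p=0$ by reducing the statement to the already-established cohomology of the associated Hom-Leibniz color algebra. Recall from Proposition \ref{propo-leibniz} that $(\mathcal{L},[\cdot,\cdot]_\alpha,\varepsilon,\tilde\alpha)$ is a Hom-Leibniz color algebra, and that the $\varepsilon$-$n$-Hom-Jacobi identity \eqref{identity} is equivalent to the operator identity \eqref{map-ad} for $ad$. Moreover, by \eqref{RepIdentity1}--\eqref{identity-rep3} and \eqref{RepIdentity2}, the data $(M,\rho,\mu)$ is precisely a module over the Hom-Leibniz color algebra $\mathcal{L}$ together with the extra "action by $\mathfrak{g}$" encoded in the last term $\bullet_\alpha$. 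So the coboundary $\delta^p$ in \eqref{Nambucohomo} is (up to the conventional twist-insertions $\tilde\alpha$, $\alpha$, the bicharacter signs, and the final term) a Hom-Leibniz-type Chevalley--Eilenberg differential. The first step is therefore to set up the dictionary carefully: write $\delta^p = d_{\mathrm{CE}} + \partial$, where $d_{\mathrm{CE}}$ collects the first three sums (the genuine Hom-Leibniz coboundary acting on cochains valued in $\mathrm{Hom}(\mathfrak{g},M)$-type objects) and $\partial$ is the last term.

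Next I would verify $\delta^{p+1}\circ\delta^p\varphi=0$ by expanding the composite and grouping the resulting terms by ``type'': terms with two bracket substitutions $[X_i,X_j]_\alpha$, terms with one bracket and one $ad$, terms with one bracket and one $\rho$, terms with two $\rho$'s, terms involving the $\bullet_\alpha$ piece twice or once, etc. The crucial cancellations are: (i) the double-bracket terms cancel because of the $\varepsilon$-Hom-Leibniz identity \eqref{LeibnizId2} for $[\cdot,\cdot]_\alpha$ (a graded Jacobi-type identity which here plays the role of the Lie-algebra Jacobi identity in the classical Chevalley--Eilenberg proof); (ii) the double-$\rho$ terms cancel in pairs using the representation identity \eqref{identity-rep3}, i.e. $\rho(\tilde\alpha(X))\rho(Y)-\varepsilon(X,Y)\rho(\tilde\alpha(Y))\rho(X)=\rho([X,Y]_\alpha)\mu$, which matches them against the mixed bracket-$\rho$ terms; (iii) the bracket-$ad$ and $ad$-$ad$ terms cancel using \eqref{map-ad} (equivalently the $\varepsilon$-$n$-Hom-Jacobi identity); (iv) the terms containing the $\bullet_\alpha$ piece cancel using \eqref{RepIdentity2}, which is exactly the compatibility making $\rho$ descend through a bracketed first slot. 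Keeping track of the bicharacter prefactors $\varepsilon(\cdot,\cdot)$ and the compatibility conditions $\mu\circ\varphi=\varphi\circ(\tilde\alpha^{\otimes(p-1)}\otimes\alpha)$, $\rho(\tilde\alpha(X))\mu=\mu\rho(X)$ is what makes the twist powers $\tilde\alpha^{p-1}$, $\alpha^p$ line up so that the cancelling terms are literally equal.

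The cleanest way to organize this, which I would adopt, is to invoke that the Hom-Leibniz cochain complex of $(\mathcal{L},[\cdot,\cdot]_\alpha,\varepsilon,\tilde\alpha)$ with coefficients in the $\mathcal{L}$-module built from $(M,\rho,\mu)$ already has a square-zero differential (this is the color/graded Hom-analogue of the Loday--Pirashvili complex, and the needed identity is \eqref{LeibnizId2} together with \eqref{identity-rep3}); then the map sending a $p$-cochain $\varphi$ of the $n$-Hom-Lie color complex to the corresponding Hom-Leibniz cochain $\widehat\varphi(X_1,\dots,X_{p-1})\in\mathrm{Hom}(\mathfrak{g},M)$, $\widehat\varphi(X_1,\dots,X_{p-1})(z)=\varphi(X_1,\dots,X_{p-1},z)$, intertwines $\delta^p$ with that Hom-Leibniz differential, because the three sums in \eqref{Nambucohomo} are exactly its formula and the $ad$-term plus the $\bullet_\alpha$-term together encode the module action of $\mathfrak{g}$ on $\mathrm{Hom}(\mathfrak{g},M)$ via \eqref{map-ad} and \eqref{RepIdentity2}. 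Thus $\delta^{p+1}\delta^p\varphi$ corresponds to applying the square-zero Hom-Leibniz differential twice, hence vanishes.

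The main obstacle is purely bookkeeping: the $\bullet_\alpha$-term is not literally a Hom-Leibniz cochain operation, so one must check by hand that it assembles with the $ad$-term into a genuine module structure on $\bigwedge^{n-1}\mathfrak{g}$-valued cochains, and that all bicharacter signs from reordering the arguments $X_i$ (through the antisymmetry relation \eqref{eq-antisymetric} and bilinearity \eqref{condi2-bicharacter}--\eqref{condi3-bicharacter}) combine correctly; I expect the sign and twist-power verification in case (iv), involving \eqref{RepIdentity2} with its $(-1)^{n-i}$ factors interacting with the $(-1)^{i}$, $(-1)^{i+1}$, $(-1)^{p+1}$ prefactors of \eqref{Nambucohomo}, to be the most delicate point. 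Everything else is the standard Chevalley--Eilenberg double-sum reindexing, so I would present those cancellations schematically and only spell out the $\bullet_\alpha$ computation in detail.
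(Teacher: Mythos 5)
Your first plan --- expand $\delta^{p+1}\circ\delta^{p}\varphi$, sort the resulting terms according to which of the four pieces of \eqref{Nambucohomo} they come from, and cancel the groups using the Hom-Leibniz identity \eqref{LeibnizId2}, the $\varepsilon$-$n$-Hom-Jacobi identity in its $ad$-form \eqref{map-ad}, and the representation identities \eqref{identity-rep3} and \eqref{RepIdentity2} --- is exactly the proof given in the paper: there one writes $\delta^p=\delta_1^p+\delta_2^p+\delta_3^p+\delta_4^p$ and shows that the cross terms $\Upsilon_{ij}=\delta_i^{p+1}\circ\delta_j^p$ vanish in precisely those groups. Had you carried that computation out, your argument would coincide with the paper's.

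The route you actually adopt, however --- currying $\varphi$ to $\widehat\varphi\colon\mathcal{L}^{\otimes(p-1)}\to \mathrm{Hom}(\mathfrak{g},M)$ and invoking an already-square-zero Hom-Leibniz color complex with coefficients in $\mathrm{Hom}(\mathfrak{g},M)$ --- has a genuine gap. No such complex is established in the paper, and it cannot be cited as a black box here for two reasons. First, the coefficients are not a fixed Leibniz $\mathcal{L}$-module: \eqref{Nambucohomo} inserts degree-dependent twist powers ($\rho(\tilde{\alpha}^{p-1}(X_i))$, $\alpha^{p}(z)$, and $\alpha^{p-1}$ inside the $\bullet_\alpha$ term), so the putative action changes with the cochain degree and the classical Loday--Pirashvili square-zero argument does not apply verbatim. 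Second, and more seriously, the $\bullet_\alpha$ term is not an honest right Leibniz action that you may take for granted: it splits the fundamental object $X_p$, feeding one component $x_p^i$ into $\varphi$ and the remaining components together with $z$ into $\rho$, and the assertion that this operation, combined with the $ad$- and $\rho$-terms, satisfies the Leibniz bimodule compatibilities is exactly equivalent to the cancellations involving $\delta_4$ (resting on \eqref{RepIdentity2} together with \eqref{RepIdentity1} and \eqref{identity}), i.e.\ to the bulk of the computation you set out to avoid; note also that \eqref{RepIdentity2} is a genuinely $n$-ary condition with no counterpart in a plain Leibniz module. So your reduction defers rather than removes the core verification: to turn the proposal into a proof you must either perform the direct cancellation as the paper does, or actually prove the module axioms and the square-zero property for the curried, twist-decorated complex --- which is the same computation in different packaging.
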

\begin{proof}
Let $\varphi$ be a $p$-cochain, $X_i=(x_i^j)_{1\leq j\leq n-1}\in \mathcal{H}(\mathcal{L}(\mathfrak{g})),\ 1\leq i\leq p+2$ and $z\in \mathcal{H}(\mathfrak{g})$.  We set
 \begin{eqnarray*}
       \delta^p=\delta_1^p+\delta_2^p+\delta_3^p+\delta_4^p,
   \quad  \textrm{and} \quad \delta^{p+1}\circ\delta^p= \sum_{i,j=1}^4\Upsilon_{ij},
  \end{eqnarray*}
  where $\Upsilon_{ij} =\delta_i^{p+1}\circ\delta_j^p$ and
 {\small \begin{eqnarray*}
    \delta_1^p\varphi(X_1,...,X_{p},z )&=& \sum_{1\leq i< j}^{p}(-1)^{i}\varepsilon(X_{i+1}+...+X_{j-1},X_i)\varphi\big(\tilde{\alpha}(X_1),...,\widehat{\tilde{\alpha}(X_i)},...,
                                  ,[X_i,X_j]_{\alpha},...,\tilde{\alpha}(X_{p}),\alpha(z)\big) \\
  \delta_2^p\varphi(X_1,...,X_{p},z) &=&  \sum_{i=1}^{p}(-1)^{i}\varepsilon(X_{i+1}+...+X_{p},X_i)\varphi\big(\tilde{\alpha}(X_1),...,\widehat{\tilde{\alpha}(X_i)},...,
                                    \tilde{\alpha}(X_{p}),\textrm{ad}(X_i)(z)\big) \\
   \delta_3^p\varphi(X_1,...,X_{p},z) &=& \sum_{i=1}^{p}(-1)^{i+1}\varepsilon(\varphi+X_{1}+...+X_{i-1},X_i)\rho(\tilde{\alpha}^{p-1}(X_i))\Big(   \varphi\big(X_1,...,\widehat{X_i},...,
                                  X_{p},z\big)\Big) \\
    \delta_4^p\varphi(X_1,...,X_{p},z) &=&  (-1)^{p+1} \big(\varphi(X_1,...,X_{p-1},\ )\cdot X_{p}\big)\bullet_{\alpha}\alpha^p(z).
  \end{eqnarray*}}
To simplify the notations we replace $\textrm{ad}(X)(z)$ by $X\cdot z$.
  Let first prove that $\Upsilon_{11}+\Upsilon_{12}+\Upsilon_{21}+\Upsilon_{22}=0$ and     $(X_i)_{1\leq i\leq p}\in \mathcal{L}(\mathfrak{g})$ et $z\in \mathfrak{g}$.\\
Let us compute first $\Upsilon_{11}(\varphi)(X_1,...,X_{p},z)$. We have
\begin{align*}
     & \Upsilon_{11}(\varphi)(X_1,...,X_{p},z)\\
    =& \sum_{1\leq i<k< j}^{p}(-1)^{i+k}\varepsilon(X_{i+1}+...+X_{j-1},X_i)\varepsilon(X_{k+1}+...+X_{j-1},X_k)\\&\quad\quad\quad\quad \varphi\big(\tilde{\alpha}^2(X_1),...,\widehat{X_i},...,\widehat{\tilde{\alpha}(X_k)},...,[\tilde{\alpha}(X_k),[X_i,X_j]_\alpha]_\alpha,....,
\tilde{\alpha}^2(X_{p}),\alpha^2(z)\big) \\
    &+ \sum_{1\leq i<k< j}^{p}(-1)^{i+k-1}\varepsilon(X_{i+1}+...+\widehat{X_{k}}+...+X_{j-1},X_i)\varepsilon(X_{k+1}+...+X_{j-1},X_k)\\& \quad\quad\quad\quad\varphi\big(\tilde{\alpha}^2(X_1),...,\widehat{\tilde{\alpha}(X_i)},...,\widehat{X_k},...,[\tilde{\alpha}(X_i),[X_k,X_j]_\alpha]_\alpha,....,
\tilde{\alpha}^2(X_{p}),\alpha^2(z)\big) \\
    &+ \sum_{1\leq i<k< j}^{p}(-1)^{i+k-1}\varepsilon(X_{i+1}+...+\widehat{X_{k}}+...+X_{j-1},X_i)\varepsilon(X_{k+1}+...+X_{j-1},X_k)\\&\quad\quad\quad\quad \varphi\big(\tilde{\alpha}^2(X_1),...,\widehat{X_i},...,\widehat{[X_i,X_k]_\alpha},...,[[X_i,X_k]_\alpha,\tilde{\alpha}(X_j)]_\alpha,....,
\tilde{\alpha}^2(X_{p}),\alpha^2(z)\big).
\end{align*}
Whence applying the Hom-Leibniz identity \eqref{LeibnizId1} to $X_i,\ X_j,\ X_k\in \mathcal{H}(\mathcal{L}(\mathfrak{g}))$, we find
$\Upsilon_{11}=0$.\\
On the other hand, we have
\begin{align*}
    &  (\Upsilon_{21}(\varphi)+\Upsilon_{12}(\varphi))(X_1,...,X_{p},z)  \\
     =&  \sum_{1\leq i< j}^{p}(-1)^{i+j-1}\varepsilon(X_{i+1}+...+\widehat{X_{j}}+...+X_{p},X_i)\varepsilon(X_{j+1}+...+X_{p},X_j)\\& \quad\quad\quad\quad\varphi\big(\tilde{\alpha}^2(X_1),...,\widehat{X}_i,...,\widehat{[X_i,X_j]_\alpha},...,
\tilde{\alpha}^2(X_{p}),[X_i,X_j]_\alpha\cdot\alpha(z)\big)
\end{align*}
and
\begin{align*}
    &  \Upsilon_{22}(\varphi)(X_1,...,X_{p},z) \\
    =& \sum_{1\leq i< j}^{p}(-1)^{i+j}\varepsilon(X_{i+1}+...+X_{p},X_i)\varepsilon(X_{j+1}+...+X_{p},X_j)\\&\quad\quad\quad\quad \varphi\big(\tilde{\alpha}^2(X_1),...,\widehat{X}_i,...,\widehat{\tilde{\alpha}(X_j)},...,
\tilde{\alpha}^2(X_{p}),\big(\tilde{\alpha}(X_j)\cdot(X_i\cdot z)\big)\big) \\
  &+ \sum_{1\leq i< j}^{p}(-1)^{i+j-1}\varepsilon(X_{j+1}+...+X_{p+1},X_j)\varepsilon(X_{j+1}+...+\widehat{X_{j}}+...+X_{p},X_i)\\&\quad\quad\quad\quad \varphi\big(\tilde{\alpha}^2(X_1),...,\widehat{\tilde{\alpha}(X_i)},...,\widehat{X}_j,...,
\tilde{\alpha}^2(X_{p}),\big(\tilde{\alpha}(X_i)\cdot(X_j\cdot z)\big)\big).
\end{align*}
Then, applying   $\eqref{identity}$  to $X_i,\ X_j\in \mathcal{H}(\mathcal{L}(\mathfrak{g}))$ and $z\in \mathcal{H}(\mathfrak{g})$, $\Upsilon_{12}+\Upsilon_{21}+\Upsilon_{22}=0$.\\
  On the other hand, we have
\begin{align*}
& \Upsilon_{31}\varphi(X_1,...,X_{p+1},z )\\
             &=\sum_{1\leq i<j<k}^{p+1}  \big\{(-1)^{k+i+1}\varepsilon(\varphi+X_{1}+...+X_{k-1},X_k)\varepsilon(X_{i+1}+...+X_{j-1},X_i)\\&\quad\quad\quad\quad\rho\big(\tilde{\alpha}^{p}(X_k)\big) \varphi(\tilde{\alpha}(X_1),...,\widehat{X}_i,...,[X_i,X_j]_\alpha,...,\widehat{X}_k,...,\alpha(z)) \\
&+ (-1)^{j+i+1}\varepsilon(\varphi+X_{1}+...+X_{j-1},X_j)\varepsilon(X_{i+1}+...+\widehat{X_{j}}+...+X_{k-1},X_i)\\&\quad\quad\quad\quad\rho\big(\tilde{\alpha}^{p-1}(X_j)\big) \varphi(\tilde{\alpha}(X_1),...,\widehat{X}_i,...,\widehat{X}_j,...,[X_i,X_k]_\alpha,...,\alpha(z)) \\
                &+ (-1)^{j+i}\varepsilon(\varphi+X_{1}+...+X_{i-1},X_i)\varepsilon(X_{j+1}+...+X_{k-1},X_j)\\&\quad\quad\quad\quad\rho\big(\tilde{\alpha}^{p-1}(X_i)\big)  \varphi(\tilde{\alpha}(X_1),...,\widehat{X}_i,...,\widehat{X}_j,...,[X_j,X_k]_\alpha,...,\alpha(z))\big\},\end{align*}
                \begin{align*}
            &\Upsilon_{13}\varphi(X_1,...,X_{p+1},z ) = -\Upsilon_{31}\varphi(X_1,...,X_{p+1},z )  \\&+ \sum_{1\leq i<j}^{p+1}     (-1)^{i+j+1}\varepsilon(\varphi+X_{1}+...+X_{i-1},X_i)\varepsilon(\varphi+X_{1}+...+\widehat{X_{i}}+...+X_{j-1},X_j)
           \\& \quad\quad\quad\quad\rho\big(\tilde{\alpha}^{p-1}([X_i,X_j]_\alpha)\big) \mu\big(\varphi(X_1,...,\widehat{X}_i,...,\widehat{X}_j,...,z)\big)\end{align*}and
                \begin{align*}
               & \Upsilon_{33}\varphi(X_1,...,X_{p+1},z )\\
             &=\sum_{1\leq i<j}^{p+1} \Big\{  (-1)^{i+j+1}\varepsilon(\varphi+X_{1}+...+X_{i-1},X_i)\varepsilon(\varphi+X_{1}+...+X_{j-1},X_j)\rho\big(\tilde{\alpha}^{p-1}(X_i)\big) \\&\quad\quad\quad\quad \Big(\rho\big(\tilde{\alpha}^p(X_j)\big)\big(\varphi(X_1,...,\widehat{X}_i,...,\widehat{X}_j,...,z)\big)\Big) \\
                &+ (-1)^{i+j}\varepsilon(\varphi+X_{1}+...+X_{i-1},X_i)\varepsilon(\varphi+X_{1}+...+\widehat{X_{i}}+...+X_{j-1},X_j)
                \\&\quad\quad\quad\quad\rho\big(\tilde{\alpha}^{p-1}(X_j)\big)        \Big(\rho\big(\tilde{\alpha}^{p-1}(X_i)\big)\big(\varphi(X_1,...,\widehat{X}_j,...,\widehat{X}_i,...,z)\big)\Big)
            \end{align*}
Then, applying  \eqref{RepIdentity1} to  $\tilde{\alpha}^p(X_i)\in \mathcal{L}(\mathfrak{g})$, $\tilde{\alpha}^p(X_j)\in \mathcal{L}(\mathfrak{g})$ 		and  $\varphi(X_1,...,\widehat{X}_i,...,\widehat{X}_j,...,z)\in M$, we have$$\Upsilon_{13}+\Upsilon_{33}+\Upsilon_{31}=0.$$
By the same calculation, we can prove that
\begin{align*}&\Upsilon_{23}+\Upsilon_{32}=0,\\
&\Upsilon_{14}+\Upsilon_{41}+\Upsilon_{24}+\Upsilon_{42}+\Upsilon_{34}+\Upsilon_{43}+\Upsilon_{44}=0,
\end{align*}
which ends the proof.

\end{proof}
  \begin{df}We define the graded space of
  \begin{itemize}
    \item
   $p$-cocycles  by
  $\mathcal{Z}^p_{\alpha,\mu}(\mathfrak{g},M)=\{\varphi\in \mathcal{C}^p_{\alpha,\mu}(\mathfrak{g},M):\delta^p\varphi=0\},$   \item  $p$-coboundaries  by
  $\mathcal{B}^p_{\alpha,\mu}(\mathfrak{g},M)=\{\psi=\delta^{p-1}\varphi:\varphi\in \mathcal{C}^{p-1}_{\alpha,\mu}(\mathfrak{g},M)\}.$\end{itemize}
  \end{df}
  \begin{lem}$\mathcal{B}^p_{\alpha,\mu}(\mathfrak{g},M)\subset \mathcal{Z}^p_{\alpha,\mu}(\mathfrak{g},M)$.
  \end{lem}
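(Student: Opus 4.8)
The plan is to obtain the inclusion as a formal consequence of the square-zero property established in Proposition~\ref{opercob}. Let $\psi\in\mathcal{B}^p_{\alpha,\mu}(\mathfrak{g},M)$. By the definition of the space of $p$-coboundaries (which presupposes $p\geq 2$, so that $\delta^{p-1}$ makes sense), there is a $(p-1)$-cochain $\varphi\in\mathcal{C}^{p-1}_{\alpha,\mu}(\mathfrak{g},M)$ with $\psi=\delta^{p-1}\varphi$. Applying Proposition~\ref{opercob} with $p$ replaced by $p-1$ gives
\[
\delta^p\psi=\delta^p\big(\delta^{p-1}\varphi\big)=\big(\delta^p\circ\delta^{p-1}\big)(\varphi)=0,
\]
so that $\psi\in\mathcal{Z}^p_{\alpha,\mu}(\mathfrak{g},M)$, whence the claim.

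The only point that requires a (routine) check is that $\psi=\delta^{p-1}\varphi$ is a genuine $p$-cochain, i.e. that it is $\varepsilon$-skew-symmetric and satisfies the equivariance relation $\mu\circ\psi(X_1,\dots,X_p,z)=\psi(\tilde{\alpha}(X_1),\dots,\tilde{\alpha}(X_p),\alpha(z))$; this is exactly the statement that $\delta^{p-1}$ maps $\mathcal{C}^{p-1}_{\alpha,\mu}(\mathfrak{g},M)$ into $\mathcal{C}^p_{\alpha,\mu}(\mathfrak{g},M)$, as already asserted in the definition of the coboundary operator. First I would verify skew-symmetry, which is inherited term by term from that of $\varphi$ and of the Leibniz bracket $[\cdot,\cdot]_\alpha$ on $\mathcal{L}(\mathfrak{g})$. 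Then I would push $\mu$ through each of the four summands $\delta^{p-1}_i\varphi$ using the equivariance of $\varphi$ together with the intertwining identities $\tilde{\alpha}\circ\mathrm{ad}_X=\mathrm{ad}_{\tilde{\alpha}(X)}\circ\alpha$, $\tilde{\alpha}([X,Y]_\alpha)=[\tilde{\alpha}(X),\tilde{\alpha}(Y)]_\alpha$ (multiplicativity), and $\rho(\tilde{\alpha}(X))\circ\mu=\mu\circ\rho(X)$, which reproduces $\psi$ evaluated on the $\tilde{\alpha}$- and $\alpha$-twisted arguments.

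There is essentially no obstacle: the substantive work has already been carried out in Proposition~\ref{opercob}, and what remains is bookkeeping. If one prefers, the cochain-membership verification may be omitted by reading the definition of $\delta^{p-1}$ as already carrying the assertion $\delta^{p-1}\big(\mathcal{C}^{p-1}_{\alpha,\mu}(\mathfrak{g},M)\big)\subseteq\mathcal{C}^{p}_{\alpha,\mu}(\mathfrak{g},M)$, in which case the proof reduces to the single displayed line above.
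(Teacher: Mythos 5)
Your argument is correct and is exactly the intended one: the paper states this lemma without proof as an immediate consequence of Proposition~\ref{opercob}, i.e. $\psi=\delta^{p-1}\varphi$ gives $\delta^p\psi=(\delta^p\circ\delta^{p-1})(\varphi)=0$. Your extra remarks on checking that $\delta^{p-1}\varphi$ is indeed a $p$-cochain are harmless bookkeeping already implicit in the definition of the coboundary operator.
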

  \begin{df}We call the $p^{\textrm{th}}$-cohomology group  the quotient
  $$\mathcal{H}^p_{\alpha,\mu}(\mathfrak{g},M)=\mathcal{Z}^p_{\alpha,\mu}(\mathfrak{g},M)/\mathcal{B}^p_{\alpha,\mu}(\mathfrak{g},M).$$
  \end{df}
  \begin{rems}
  Let $(\mathfrak{g},[\cdot,\cdots,\cdot],\varepsilon,\alpha)$ be a   $n$-Hom-Lie color algebra, then
  \begin{enumerate}
    \item A $1$-cochain $\phi$ is called $1$-cocycle if and only if ;
    \begin{align} \label{0-cocycle}
       & \phi \circ ad_X (x_n) =  \sum_{i=1}^{n}    (-1)^{n-i} \varepsilon(\phi,\hat{X})
    \varepsilon(x_i,X^i) \rho(x_{1},\cdots,\hat{x_i},\cdots,x_n) \phi(x_i).
     \end{align}
     In particular, if $M=\mathfrak{g}$,  equation  Eq. \eqref{0-cocycle} can be rewritten as \begin{align*}
      \sum_{i=1}^n\varepsilon(\phi,X_i) [x_1,\ldots,\phi (x_i),\ldots,x_n] - \phi ([x_1,x_2,\ldots,x_n]) = 0.
 \end{align*}

    \item A $2$-cochain $\psi$ is called $2$-cocycle, if and only if, for all homogeneous elements $X=(x_1,\ldots,x_{n-1}),Y=(y_1,\ldots,y_{n-1})\in\mathcal{L}(\mathfrak{g})$ and $y_n\in\mathfrak{g}$
   \begin{align} \nonumber
      &\rho (\tilde{\alpha}(X)) \psi(y_1,\dots,y_n)+\psi (\alpha(x_1),\ldots,\alpha(x_{n-1}),ad_{Y} \cdot y_n) \\=\nonumber
      &\sum_{i=1}^n \varepsilon (X,Y_i)\psi (\alpha(y_1),\ldots,\alpha(y_{i-1}),[x_1,\dots,x_{n-1},y_i],\ldots,\alpha(y_n)) \\
    & +\sum_{i=1}^n  (-1)^{n-i}\varepsilon (\psi +X,\hat{Y}_n) \varepsilon(y_i,Y_n^i)\rho (\alpha(y_1),\ldots,\hat{y_i},\ldots,\alpha(y_n)) \psi (x_1,\dots,x_{n-1},y_i).
    \label{2Cocycle}\end{align}
  \end{enumerate}
  \end{rems}
  \section{Deformations of  $n$-Hom-Lie color  algebras}\label{deformations}
  In this Section,  we study formal deformations  and discuss equivalent deformations of  $n$-Hom-Lie color algebras.
  \begin{df}Let $(\mathfrak{g},[\cdot ,\cdots,\cdot ],\varepsilon,\alpha)$ be a  $n$-Hom-Lie color algebra and $\omega_i : \mathfrak{g}^{\otimes n} \to \mathfrak{g}$ be  $\varepsilon$-skew-symmetric linear maps of degree zero. Consider a $\lambda$-parametrized family of $n$-linear operations, ($\lambda \in \mathbb{K}$): \begin{align}\label{deformation}&
  [x_1 ,\cdots,x_n ]_\lambda = [x_1 ,\cdots,x_n ] + \sum_{i = 1 }^\infty  \lambda^i \omega_i(x_1,\ldots,x_n).\end{align}

The tuple    $\mathfrak{g}_\lambda:=(\mathfrak{g},[\cdot,\cdots,\cdot ]_\lambda,\varepsilon,\alpha)$ is   a one-parameter \textbf{formal deformation } of $(\mathfrak{g},[\cdot ,\cdots,\cdot ],\varepsilon,\alpha)$ generated by $ \omega_i$ if it defines a $n$-Hom-Lie color algebra.
 \begin{rems}~~
  \begin{enumerate}
      \item If $\lambda^2=0$, ($k=1$), the deformation is called infinitesimal.
      \item  If $\lambda^n=0$,  the deformation is said to be of order $(n-1)$.
  \end{enumerate}
 \end{rems}Let $\omega$ and $\omega'$ be two $2$-cochains on a  $n$-Hom-Lie color algebra $(\mathfrak{g},[\cdot,\cdots,\cdot ],\varepsilon,\alpha)$ with coefficients in the adjoint representation.
Define the  bracket $[~,~]:\mathcal{C}^2_{\alpha,\alpha}(\mathfrak{g},\mathfrak{g})\times \mathcal{C}^2_{\alpha,\alpha}(\mathfrak{g},\mathfrak{g})\to \mathcal{C}^3_{\alpha,\alpha}(\mathfrak{g},\mathfrak{g}),$ for $X,Y\in\mathcal{H}(\mathcal{L})$ and $z\in \mathcal{H}(\mathfrak{g})$, by
 \begin{align*}
     [\omega,\omega'] (X ,Y,z) =&
     \omega (\tilde{\alpha}(X), \omega'(Y,z)) - \varepsilon(X,Y) \omega (\tilde{\alpha}(Y), \omega'(X,z)) - \omega ( \omega'(X,\bullet) \circ \tilde{\alpha}(Y), \alpha(z)) \\
     &+\omega' (\tilde{\alpha}(X), \omega(Y,z)) - \varepsilon(X,Y) \omega' (\tilde{\alpha}(Y), \omega(X,z)) - \omega' ( \omega(X ,\bullet) \circ \tilde{\alpha}(Y),\alpha( z)),
 \end{align*}
 where \begin{align*}
     &\omega(X,\bullet) \circ \tilde{\alpha}(Y) = \sum_{k=1}^{n-1} \varepsilon (X,Y_i) \alpha(y_1) \wedge \ldots \wedge \omega(X,y_k) \wedge \ldots \wedge \alpha(y_{n-1})
 \end{align*}
for all $X,Y \in \mathcal{H}(\mathcal{L}(\mathfrak{g}))$ and $z \in \mathcal{H}(\mathfrak{g}) $.

 \begin{thm} \label{thm-defo}
  With the above notations, the   $2$-cochains $\omega_i$, $  i\geq 1$,  generate a  one-parameter formal deformation $\mathfrak{g}_\lambda$ of order $k$  of a $n$-Hom-Lie color algebra $(\mathfrak{g},[\cdot ,\cdots,\cdot ],\varepsilon,\alpha)$ if and only if the following conditions hold :
 \begin{equation}\label{deformation1}
     \delta^2 \omega_1 = 0,
 \end{equation}
 \begin{equation}\label{deformation2}
     \delta^2 \omega_l + \frac{1}{2} \ \sum_{i=1}^{l-1} \ [\omega_i,\omega_{l-i}] = 0 ,~~~~~~ 2\leq l\leq k,
 \end{equation}
 \begin{equation}\label{deformation3}
     ~~~~~~ \frac{1}{2} \ \sum_{i=l-k}^{k} \ [\omega_i,\omega_{l-i}] = 0, ~~~~~~ n\leq l\leq 2k.
 \end{equation}
 \end{thm}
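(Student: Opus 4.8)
The plan is to substitute the $\lambda$-deformed bracket \eqref{deformation} into the defining identities of an $n$-Hom-Lie color algebra and to compare coefficients of powers of $\lambda$. Setting $\omega_0:=[\cdot,\cdots,\cdot]$, the family $[\cdot,\cdots,\cdot]_\lambda=\sum_{i=0}^{k}\lambda^i\omega_i$ is a (finite) combination of $\varepsilon$-skew-symmetric linear maps of degree zero, so the $\varepsilon$-skew-symmetry \eqref{ColorSkewSym} is satisfied by $[\cdot,\cdots,\cdot]_\lambda$ for every $\lambda$, whatever the $\omega_i$; hence $\mathfrak{g}_\lambda$ is an $n$-Hom-Lie color algebra if and only if $[\cdot,\cdots,\cdot]_\lambda$ obeys the $\varepsilon$-$n$-Hom-Jacobi identity \eqref{identity}. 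Plugging $[\cdot,\cdots,\cdot]_\lambda$ into \eqref{identity} and expanding produces a polynomial identity in $\lambda$ of degree $2k$, which is equivalent to the simultaneous vanishing of the coefficient of $\lambda^t$ for each $t$, $0\le t\le 2k$.

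Next I would identify that coefficient. Every term occurring on either side of the expanded \eqref{identity} is a composite of an ``outer'' operation $\omega_a$ and an ``inner'' operation $\omega_b$ and thus contributes to $\lambda^{a+b}$; so the coefficient of $\lambda^t$ is the sum, over all ordered pairs $(a,b)$ with $a+b=t$, of the bilinear expression obtained by placing $\omega_a$ outside and $\omega_b$ inside. Comparing this bilinear expression with the bracket $[\cdot,\cdot]:\mathcal{C}^2_{\alpha,\alpha}(\mathfrak{g},\mathfrak{g})\times\mathcal{C}^2_{\alpha,\alpha}(\mathfrak{g},\mathfrak{g})\to\mathcal{C}^3_{\alpha,\alpha}(\mathfrak{g},\mathfrak{g})$ introduced just before the statement, one sees that $[\omega,\omega']$ is precisely the symmetrization of that expression; hence the part of the $\lambda^t$-coefficient with $a,b\ge 1$ equals $\tfrac12\sum_{i=1}^{t-1}[\omega_i,\omega_{t-i}]$, the factor $\tfrac12$ accounting for the double counting of each unordered pair. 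It remains to recognise the terms with $a=0$ or $b=0$, i.e. the pairs $(0,t)$ and $(t,0)$, as $\delta^2\omega_t$, the coboundary being taken with respect to the adjoint representation ($\rho=ad$, $\mu=\alpha$): here one writes out $\delta^2\omega_t$ from \eqref{Nambucohomo} with $p=2$ (equivalently from the $2$-cocycle condition \eqref{2Cocycle}) and matches it term by term against the $\omega_0$-paired contributions, using $ad_X\cdot v=[x_1,\cdots,x_{n-1},v]=\omega_0(x_1,\cdots,x_{n-1},v)$ and the identity \eqref{map-ad} of Proposition \ref{propo-leibniz} for the terms involving $ad_{[X,Y]_\alpha}$. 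I expect this last matching to be the main technical obstacle, since it requires keeping precise track of the bicharacter factors $\varepsilon(X,Y_i)$ and of the various twists by $\alpha$ and $\tilde\alpha$.

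With these two identifications in hand the result follows by reading off the $\lambda^t$-coefficient for each $t$. The coefficient of $\lambda^0$ is the $\varepsilon$-$n$-Hom-Jacobi identity for $\omega_0$ and holds by hypothesis. For $t=1$ there is no pair $a,b\ge 1$ with $a+b=1$, so the coefficient reduces to $\delta^2\omega_1$, giving \eqref{deformation1}. For $2\le t\le k$ both parts are present and the coefficient is $\delta^2\omega_t+\tfrac12\sum_{i=1}^{t-1}[\omega_i,\omega_{t-i}]$, which yields \eqref{deformation2}. For $k<t\le 2k$ one has $\omega_t=0$, so the $\delta^2$-part vanishes and the double sum is restricted to $t-k\le i\le k$, leaving $\tfrac12\sum_{i=t-k}^{k}[\omega_i,\omega_{t-i}]$, which is \eqref{deformation3}. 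Conversely, if \eqref{deformation1}, \eqref{deformation2} and \eqref{deformation3} all hold, then every coefficient of $\lambda^t$ vanishes, so $[\cdot,\cdots,\cdot]_\lambda$ satisfies \eqref{identity}; together with its automatic $\varepsilon$-skew-symmetry this makes $\mathfrak{g}_\lambda=(\mathfrak{g},[\cdot,\cdots,\cdot]_\lambda,\varepsilon,\alpha)$ an order-$k$ formal deformation of $(\mathfrak{g},[\cdot,\cdots,\cdot],\varepsilon,\alpha)$, completing the proof.
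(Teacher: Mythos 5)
Your proposal is correct and follows essentially the same route as the paper: expand the deformed bracket, impose the $\varepsilon$-$n$-Hom-Jacobi identity (the paper phrases it via the equivalent adjoint/Leibniz form of Proposition \ref{propo-leibniz}, which you also invoke through \eqref{map-ad}), and compare coefficients of $\lambda^t$, identifying the terms paired with the original bracket as $\delta^2\omega_t$ and the mixed terms with indices $\ge 1$ as $\tfrac12\sum_i[\omega_i,\omega_{t-i}]$. The level of detail in your term-by-term matching is comparable to the paper's own, so no gap to report.
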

 \begin{proof}
 Let  $\omega_i$, $  i\geq 1$ be  $2$-cochains generating  a one-parameter formal  deformation $\mathfrak{g}_\lambda:=(\mathfrak{g},[\cdot,\cdots,\cdot ]_\lambda,\varepsilon,\alpha)$ of order $k$ of a $n$-Hom-Lie color algebra $(\mathfrak{g},[\cdot,\ldots,\cdot],\varepsilon,\alpha)$. Then  $\mathfrak{g}_\lambda$ is also a $n$-Hom-Lie color algebra. According to Proposition \ref{propo-leibniz}, the identity of $\varepsilon$-$n$-Hom-Jacobi identity \eqref{identity} on $\mathfrak{g}_\lambda$  is equivalent to \begin{equation} \label{eq-associee}
 ad_{[X,Y]_\alpha^\lambda} ^\lambda\cdot \alpha (z) = ad_{\tilde{\alpha}(X)} ^\lambda\cdot ( ad_{Y} ^\lambda\cdot z)  - \varepsilon (X,Y ) \ ad_{\tilde{\alpha}(Y)} ^\lambda\cdot ( ad_{X} ^\lambda\cdot z),    \end{equation}
 where$$
     [X,Y]_\alpha^\lambda = [X,Y]_\alpha + \sum_{i = 1}^k\lambda^i \omega_i(X ,\bullet) \circ \tilde{\alpha}(Y)
$$
and  the adjoint map on $\mathfrak{g}_\lambda$ is given by $$ad_{X}^\lambda\cdot z= ad_X \cdot z + \sum_{i =1}^k\lambda^i \omega_i(X,z). $$
 The left-hand side of \eqref{eq-associee} is equal to :
 \begin{align*}
    ad_{[X,Y]_\alpha^\lambda} ^\lambda\cdot\alpha (z)= &ad_{[X,Y]_\alpha} \cdot \alpha(z) + \sum_{i = 1}^k\lambda^i \big{(}  \omega_i([X,Y]_\alpha,\alpha(z))+ ad_{\omega_i(X,\bullet) \circ \tilde{\alpha}(Y)} \cdot \alpha(z) \big{)} \\
    &+ \sum_{i,j = 1}^k\lambda^{i+j} ad_{\omega_i(\omega_j(X,\bullet) \circ \tilde{\alpha}(Y))} \cdot \alpha(z).
   \end{align*}
 The right-hand side of \eqref{eq-associee} is equal to :
 \begin{align*}
     ad_{\tilde{\alpha}(X)} ^\lambda\cdot ( ad_{Y} ^\lambda\cdot z) = & ad_{\tilde{\alpha}(X)} \cdot (ad_Y \cdot z) + \sum_{i = 1}^k\lambda^i \big{(}  \omega_i(\tilde{\alpha}(X),ad_Y \cdot z)+ ad_{\tilde{\alpha}(X)} \cdot \omega_i (Y,z)\big{)} \\
     &+ \sum_{i,j =1}^k\lambda^{i+j} \omega_i(\tilde{\alpha}(X),\omega_j(Y,z))
 \end{align*}
 and
 \begin{align*}
     ad_{\tilde{\alpha}(Y)} ^\lambda\cdot ( ad_{X} ^\lambda\cdot z) = & ad_{\tilde{\alpha}(Y)} \cdot (ad_X \cdot z) + \sum_{i = 1}^k\lambda^i \big{(}  \omega_i(\tilde{\alpha}(Y),ad_X \cdot z)+ ad_{\tilde{\alpha}(Y)} \cdot \omega_i (X,z)\big{)} \\
     &+ \sum_{i,j = 1}^k\lambda^{i+j} \omega_i(\tilde{\alpha}(Y),\omega_j(X,z)).
 \end{align*}
 Comparing the coefficients of $\lambda^l$, we obtain conditions \eqref{deformation1}, \eqref{deformation2} and \eqref{deformation3} respectively.
 \end{proof}
 \begin{rems}~~\begin{enumerate}
     \item
Equation \eqref{deformation1} means that  $\omega_1$ is always  a $2$-cocycle on $\mathfrak{g}$.
\item  If $\mathfrak{g}_\lambda$ is a deformation of  order $k$. Then,   by Eq. \eqref{deformation3}, we deduce that   $(\mathfrak{g},\omega_{k} ,\varepsilon,\alpha)$ is  a $n$-Hom-Lie color algebra.
 \item In particular,
   consider an  infinitesimal deformation of $\mathfrak{g}_\lambda$ generated by  $\omega:\wedge^n\mathfrak{g}\rightarrow\mathfrak{g}$ defined as 
  $$[\cdot ,\cdots\,\cdot ]_\lambda=[\cdot ,\cdots,\cdot ]+\lambda\omega(\cdot ,\cdots,\cdot).$$
  The linear map
$\omega$ generates an infinitesimal deformation of the  multiplicative $n$-Hom-Lie color algebra
$\mathfrak{g}$ if and only if:

  (a) $(\mathfrak{g},\omega,\varepsilon,\alpha)$  is a multiplicative $n$-Hom-Lie color algebra.

  (b)
$\omega$ is a $2$-cocycle of $\mathfrak{g}$ with coefficients in the adjoint representation. That is $\omega$ satisfies condition \eqref{2Cocycle} for $\rho=ad$.
\end{enumerate}
\end{rems}
  \end{df}
 \begin{df}
 Two formal deformations $\mathfrak{g}_\lambda$ and $\mathfrak{g}_{\lambda^{'}}$ of a $n$-Hom-Lie color algebra $(\mathfrak{g},[\cdot,\ldots,\cdot],\varepsilon,\alpha)$ are said to be  equivalent  if there exists a formal  isomorphism  $\phi_\lambda :\mathfrak{g}_\lambda \rightarrow \mathfrak{g}_{\lambda^{'}}$, where $\phi_\lambda = \sum_{i \geq 0} \phi_i \lambda^i $, and, $\phi_i :\mathfrak{g}\rightarrow\mathfrak{g}$ are  linear maps of degree zero  such that $\phi_0 = id_\mathfrak{g}$, $\phi_i \circ \alpha = \alpha \circ \phi_i $ and,
 \begin{align}\label{eq.equivalence}
     \phi_\lambda \circ [x_1,\cdots,x_n]_\lambda = [\phi_\lambda(x_1),\cdots,\phi_\lambda(x_n)]_{\lambda^{'}}.\end{align}
It   is denoted by $\mathfrak{g}_\lambda \sim \mathfrak{g}_{\lambda^{'}} $. A    formal
deformation $\mathfrak{g}_\lambda  $ is said to be  trivial if $\mathfrak{g}_\lambda \sim \mathfrak{g}_{0} $.
 \end{df}
 \begin{thm}
 Let $\mathfrak{g}_\lambda $ and $\mathfrak{g}_{\lambda^{'}}$ be two equivalent  deformations of a $n$-Hom-Lie color algebra $(\mathfrak{g},[\cdot,\ldots,\cdot],\varepsilon,\alpha)$ generated by $\omega$ and $\omega'$ respectively. Then   $\omega$ and $\omega'$ belong to the same cohomology class in the cohomology group $\mathcal{H}^2_{\alpha,\mu}(\mathfrak{g},\mathfrak{g})$
 \end{thm}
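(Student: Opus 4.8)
The plan is to extract the coefficient of $\lambda$ in the equivalence relation \eqref{eq.equivalence} and to recognize the resulting identity as the statement that $\omega-\omega'$ is the $\delta^1$-coboundary of the linear term of the formal isomorphism.

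First I would write the formal isomorphism as $\phi_\lambda=\mathrm{id}_{\mathfrak{g}}+\sum_{i\ge 1}\lambda^i\phi_i$ (so $\phi_0=\mathrm{id}_{\mathfrak{g}}$) and substitute the expansions $[x_1,\dots,x_n]_\lambda=[x_1,\dots,x_n]+\lambda\,\omega(x_1,\dots,x_n)$ and $[\,\cdot\,]_{\lambda'}=[\,\cdot\,]+\lambda\,\omega'$ into \eqref{eq.equivalence}. Expanding $[\phi_\lambda(x_1),\dots,\phi_\lambda(x_n)]_{\lambda'}$ by multilinearity and collecting powers of $\lambda$, the $\lambda^0$ part is the tautology $[x_1,\dots,x_n]=[x_1,\dots,x_n]$, while the $\lambda^1$ part gives
\[
\phi_1\big([x_1,\dots,x_n]\big)+\omega(x_1,\dots,x_n)=\sum_{i=1}^{n}[x_1,\dots,\phi_1(x_i),\dots,x_n]+\omega'(x_1,\dots,x_n),
\]
where every $\varepsilon$-sign that would be produced by moving $\phi_1(x_i)$ past an $x_k$ is trivial because $\phi_1$ is homogeneous of degree zero, so $\varepsilon(\phi_1,-)=1$.

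Then I would rearrange this to $\omega-\omega'=\sum_{i=1}^{n}[x_1,\dots,\phi_1(x_i),\dots,x_n]-\phi_1([x_1,\dots,x_n])$ and observe that the right-hand side is precisely $\delta^1\phi_1$ evaluated on $(X,z)=\big((x_1,\dots,x_{n-1}),x_n\big)$. Indeed, taking $M=\mathfrak{g}$ with the adjoint representation in the coboundary formula \eqref{Nambucohomo}, the several sums defining $\delta^1$ collapse --- after reordering bracket slots via the $\varepsilon$-skew-symmetry \eqref{eq-antisymetric} --- exactly to $\sum_{i=1}^n\varepsilon(\phi_1,X_i)[x_1,\dots,\phi_1(x_i),\dots,x_n]-\phi_1([x_1,\dots,x_n])$, which is the reformulation of the $1$-cocycle identity recorded after \eqref{0-cocycle} in the Remarks; and $\varepsilon(\phi_1,X_i)=1$ since $\phi_1$ has degree zero. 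Before concluding, I would check that $\phi_1$ is a genuine $1$-cochain: the cochain condition for $p=1$ reads $\mu\circ\phi_1=\phi_1\circ\alpha$, which for the adjoint representation ($\mu=\alpha$) is exactly the hypothesis $\alpha\circ\phi_1=\phi_1\circ\alpha$. Hence $\omega-\omega'=\delta^1\phi_1\in\mathcal{B}^2_{\alpha,\mu}(\mathfrak{g},\mathfrak{g})$, so $\omega$ and $\omega'$ represent the same class in $\mathcal{H}^2_{\alpha,\mu}(\mathfrak{g},\mathfrak{g})$.

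The only delicate point is the last identification: verifying that the abstract coboundary $\delta^1\phi_1$ from \eqref{Nambucohomo} --- in particular the $(-1)^{n-i}$ signs and the $\varepsilon$-factors coming from the ``$\bullet_\alpha$'' term --- really reduces to the compact expression $\sum_{i}[x_1,\dots,\phi_1(x_i),\dots,x_n]-\phi_1([x_1,\dots,x_n])$. This is routine precisely because $\phi_1$ is homogeneous of degree zero, which trivializes all sign subtleties involving $\phi_1$ itself, and because it is already implicit in the Remarks' rewriting of \eqref{0-cocycle}; so no genuinely new computation is needed beyond that bookkeeping.
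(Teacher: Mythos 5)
Your proposal is correct and follows essentially the same route as the paper: extract the coefficient of $\lambda$ in \eqref{eq.equivalence}, rearrange to get $\omega-\omega'=\sum_{i}[x_1,\dots,\phi_1(x_i),\dots,x_n]-\phi_1([x_1,\dots,x_n])$, and recognize this as $\delta^1\phi_1$, hence a $2$-coboundary. Your additional verification that $\phi_1$ is a genuine $1$-cochain (via $\phi_1\circ\alpha=\alpha\circ\phi_1$ and $\mu=\alpha$ for the adjoint representation) is a point the paper leaves implicit, but it does not change the argument.
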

 \begin{proof} Its enough to prove that $\omega-\omega' \in B^2(\mathfrak{g},\mathfrak{g})$. \\
 We have  two equivalent deformations $\mathfrak{g}_\lambda $ and $\mathfrak{g}_{\lambda^{'}} $, then identification of coefficients of $\lambda$ in  \eqref{eq.equivalence} leads to:
 \begin{align*}
    & \omega(x_1,\cdots,x_n) + \phi_1 [x_1,\cdots,x_n] =  \omega^{'}(x_1,\cdots,x_n) + [\phi(x_1),\cdots,x_n] + \ldots + [x_1,\cdots,\phi(x_n)].
 \end{align*}
 Thus,
 \begin{align*}
     \omega(x_1,\cdots,x_n) - \omega^{'}(x_1,\cdots,x_n)  =& -\phi [x_1,\cdots,x_n]  +  [\phi(x_1),\cdots,x_n] + \ldots + [x_1,\cdots,\phi(x_n)]\\
      =&-\phi [x_1,\cdots,x_n] + \sum_{i=1}^n (-1)^{n-i} \varepsilon(x_i,X^i) [x_1,\ldots,\hat{x_i},\ldots,x_n,\phi(x_i)]\\
      =& \delta^1 \phi (X,x_n).
 \end{align*}
 Therefore $\omega-\omega^{'} \in B^2(\mathfrak{g},\mathfrak{g})$.
\end{proof}
\section{Nijenhuis operators on $n$-Hom-Lie color algebras}\label{nijenhuis}
Motivated by the infinitesimally  trivial deformation    introduced in this Section, we define the notion of Nijenhuis operator for a multiplicative $n$-Hom-Lie color algebras which is a generalization of Nijenhuis operator on $n$-Lie algebras given in \cite{NijenhuisOperators}.   Then  we define the notion of a product structure on a $n$-Hom-Lie color algebras using  Nijenhuis operators.
 \subsection{Definitions and Constructions}
 Let $(\mathfrak{g},[\cdot,\ldots,\cdot],\varepsilon,\alpha)$ be a  $n$-Hom-Lie color algebra and $\mathfrak{g}_\lambda:=(\mathfrak{g},[\cdot,\ldots,\cdot]_\lambda,\varepsilon,\alpha)$   be a    deformation of $\mathfrak{g}$ of order $(n-1)$.
 \begin{df}
   The  deformation $\mathfrak{g}_\lambda$ is said to be \textbf{infinitesimally  trivial} if there exists a linear  map of degree zero  $\mathcal{N} : \mathfrak{g} \to \mathfrak{g}$  such that $\mathcal{T}_\lambda = id + \lambda \mathcal{N} : \mathfrak{g}_\lambda \to \mathfrak{g} $ is an algebra morphism,  that is,  for all   $x_1,\ldots,x_n \in \mathcal{H}(\mathfrak{g})$, we have
   \begin{align} \label{Nijenhuis-alpha1}
     \mathcal{T}_\lambda  \circ \alpha &= \alpha \circ \mathcal{T}_\lambda,
 \\\label{eq-trivial}
     \mathcal{T}_\lambda[x_1,\ldots,x_n]_\lambda& = [\mathcal{T}_\lambda(x_1),\ldots,\mathcal{T}_\lambda(x_n)].
 \end{align}
   \end{df}
 The condition \eqref{Nijenhuis-alpha1} is equivalent to: \begin{equation} \label{Nijenhuis-alpha2}
     \mathcal{N} \circ \alpha = \alpha \circ \mathcal{N}.
 \end{equation}
 The left hand side of Eq. $\eqref{eq-trivial}$  equals to:
 \begin{align*}
 & [x_1,\ldots,x_n]+ \lambda \big{(} \omega_1(x_1,\ldots,x_n) + \mathcal{N}[x_1,\ldots,x_n] \big{)}\\
  & +\sum_{j=1}^{n-2}  \lambda^{j+1} \big{(}   \mathcal{N} \omega_j(x_1,\ldots,x_n) + \omega_{j+1}(x_1,\ldots,x_n) \big{)} +  \lambda^{n} \mathcal{N} \omega_{n-1}(x_1,\ldots,x_n).
 \end{align*}
The right hand side of Eq. $\eqref{eq-trivial}$  equals to:
\begin{align*}
   & [x_1,\ldots,x_n] + \lambda \sum_{i_1=1}^n [x_1,\ldots,\mathcal{N}x_{i_1},\ldots,x_n] + \lambda^2 \sum_{i_1 < i_2 }^n  [x_1,\ldots,\mathcal{N}x_{i_1},\ldots,\mathcal{N}x_{i_2},\ldots,x_n] + \ldots\\& +  \lambda^{n-1} \sum_{i_1 < i_2 \ldots < i_{n-1}} [x_1,\ldots,\mathcal{N}x_{i_1},\ldots,\mathcal{N}x_{i_2},\ldots,\mathcal{N}x_{i_{n-1}},x_n] +
   \lambda^{n}[\mathcal{N}x_{1},\ldots,\mathcal{N}x_{2},\ldots,\mathcal{N}x_{n}] .
\end{align*}
Therefore, by identification of coefficients, we have
\begin{align}
    \omega_1(x_1,\ldots,x_n) + \mathcal{N}[x_1,\ldots,x_n]& =  \sum_{i_1=1}^n [x_1,\ldots,\mathcal{N}x_{i_1},\ldots,x_n],
\\
    \mathcal{N} \omega_{n-1}(x_1,\ldots,x_n) &= [\mathcal{N}x_{1},\ldots,\mathcal{N}x_{2},\ldots,\mathcal{N}x_{n}],
\\
\mathcal{N} \omega_l(x_1,\ldots,x_n) + \omega_{l-1}(x_1,\ldots,x_n) &= \sum_{i_1 < i_2 \ldots <i_{l}}  [x_1,\ldots,\mathcal{N}x_{i_1},\ldots,\mathcal{N}x_{i_2},\ldots,\mathcal{N}x_{i_{l}},\ldots,x_n]
 \end{align}
 for all $2 \leq l \leq n-1 $.\\  Let $(\mathfrak{g},[\cdot,\ldots,\cdot],\varepsilon,\alpha)$ be a $n$-Hom-Lie color algebra, and $\mathcal{N}: \mathfrak{g} \to \mathfrak{g}$ be a linear map of degree zero. Define a $n$-ary bracket $[\cdot,\ldots,\cdot]_{\mathcal{N}}^1 : \wedge^n \mathfrak{g} \to \mathfrak{g}$ by : \begin{align*}
     [x_1,\ldots,x_n]_{\mathcal{N}}^1 = \sum_{i=1}^n [x_1,\ldots,\mathcal{N}x_i,\ldots,x_n] - \mathcal{N}[x_1,x_2,\ldots,x_n]
 \end{align*}
By induction,  we define  $n$-ary brackets $[\cdot,\ldots,\cdot]_{\mathcal{N}}^j  : \wedge^n \mathfrak{g} \to \mathfrak{g}$, $2 \leq j \leq n-1 $, 
 \begin{align*}
  [x_1,\ldots,x_n]_{\mathcal{N}}^j=  \sum_{i_1 < i_2 \ldots <i_{j}}  [\ldots,\mathcal{N}x_{i_1},\ldots,\mathcal{N}x_{i_2},\ldots,\mathcal{N}x_{i_{j}},\ldots] - \mathcal{N}[x_1,\ldots,x_n]_{\mathcal{N}}^{j-1}.
    \end{align*}
    In particular, we have
    \begin{align}\label{(n-1 )bracket}
        [x_1,\ldots,x_n]_{\mathcal{N}}^{n-1}=  \sum_{i_1 < i_2 \ldots <i_{n-1}}  [\mathcal{N}x_{i_1},\ldots,\mathcal{N}x_{i_{n-1}},x_n] -\mathcal{N}[x_1,\ldots,x_n]_{\mathcal{N}}^{n-2}.
    \end{align}
    
    These observations motivate the following definition.
    \begin{df}
   Let $(\mathfrak{g},[\cdot,\ldots,\cdot],\varepsilon,\alpha)$ be a $n$-Hom-Lie color algebra. A linear map of degree zero $\mathcal{N}: \mathfrak{g} \to \mathfrak{g}$ is called a \textbf{Nijenhuis operator} if it  satisfies $\mathcal{N} \circ \alpha = \alpha \circ \mathcal{N}$ and
   \begin{align} \label{Nijenhuis}
    [\mathcal{N}x_{1},\ldots,\mathcal{N}x_{2},\ldots,\mathcal{N}x_{n}]   =  \mathcal{N} [x_1,\ldots,x_n]_{\mathcal{N}}^{n-1}.
   \end{align}
    \end{df}
    The above condition can be written as :
    {\small\begin{align} \label{Nijenhuis1}
      \sum_{j=0}^{n}(-1)^{n-j}\mathcal{N}^{n-j}\bigg(   \sum_{i_1 < i_2 \ldots <i_{j}}  [\ldots,\mathcal{N}x_{i_1},\ldots,\mathcal{N}x_{i_2},\ldots,\mathcal{N}x_{i_{j}},\ldots]\bigg)=0.
   \end{align}}

    We have seen that any trivial deformation produces a Nijenhuis operator. Conversely,
any Nijenhuis operator gives a trivial deformation as the following theorem shows.
 \begin{thm} Let $(\mathfrak{g},[\cdot,\ldots,\cdot],\varepsilon,\alpha)$ be a $n$-Hom-Lie color algebra and
 $\mathcal{N}$ be a Nijenhuis operator on  $(\mathfrak{g},[\cdot,\cdots,\cdot],\varepsilon,\alpha)$. Then the bracket 
 \begin{align}&
  [x_1 ,\cdots,x_1 ]_\lambda = [x_1 ,\cdots,x_1 ] + \sum_{i= 1 }^{n-1}  \lambda^i [x_1,\ldots,x_n]_\mathcal{N}^i\end{align}
 defines  a deformation of $\mathfrak{g}$ which is  infinitesimally  trivial.
 \end{thm}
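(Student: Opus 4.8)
The plan is to verify directly that the $\lambda$-parametrized bracket $[\cdot,\ldots,\cdot]_\lambda = \sum_{i=0}^{n-1}\lambda^i[\cdot,\ldots,\cdot]_{\mathcal N}^i$ (with $[\cdot,\ldots,\cdot]_{\mathcal N}^0$ the original bracket) is a genuine $n$-Hom-Lie color algebra structure of order $n-1$, and then to produce the trivializing morphism explicitly. First I would observe that each $[\cdot,\ldots,\cdot]_{\mathcal N}^i$ is $\varepsilon$-skew-symmetric and of degree zero, since it is built from the original bracket, the symmetric sum over index subsets $i_1<\cdots<i_j$, and the degree-zero map $\mathcal N$; hence $[\cdot,\ldots,\cdot]_\lambda$ is $\varepsilon$-skew-symmetric. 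The condition $\mathcal N\circ\alpha=\alpha\circ\mathcal N$ together with multiplicativity of $\alpha$ on $[\cdot,\ldots,\cdot]$ shows by induction on $i$ that $\alpha$ is a morphism for each $[\cdot,\ldots,\cdot]_{\mathcal N}^i$, so $\alpha$ remains a twist map for $[\cdot,\ldots,\cdot]_\lambda$.

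The key step is to check the $\varepsilon$-$n$-Hom-Jacobi identity \eqref{identity} for $[\cdot,\ldots,\cdot]_\lambda$. Rather than expanding \eqref{identity} directly, I would first establish the clean identity
\begin{align}\label{Tlambda-mult}
  \mathcal T_\lambda[x_1,\ldots,x_n]_\lambda = [\mathcal T_\lambda(x_1),\ldots,\mathcal T_\lambda(x_n)], \qquad \mathcal T_\lambda = \mathrm{id}+\lambda\mathcal N,
\end{align}
by comparing coefficients of $\lambda^l$ on both sides: the right-hand side contributes $\sum_{i_1<\cdots<i_l}[x_1,\ldots,\mathcal Nx_{i_1},\ldots,\mathcal Nx_{i_l},\ldots,x_n]$ at order $l$ (for $0\le l\le n$, with the order-$n$ term being $[\mathcal Nx_1,\ldots,\mathcal Nx_n]$), while the left-hand side contributes $[x_1,\ldots,x_n]_{\mathcal N}^l + \mathcal N[x_1,\ldots,x_n]_{\mathcal N}^{l-1}$; by the very definition of the brackets $[\cdot,\ldots,\cdot]_{\mathcal N}^j$ these telescope to the same thing for $1\le l\le n-1$, and the Nijenhuis condition \eqref{Nijenhuis} is exactly what makes the order-$n$ coefficients agree. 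Thus \eqref{Tlambda-mult} holds, which simultaneously gives \eqref{eq-trivial} and, combined with $\mathcal N\alpha=\alpha\mathcal N$, gives \eqref{Nijenhuis-alpha1}.

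Having \eqref{Tlambda-mult}, I would then transport the $n$-Hom-Lie color structure along $\mathcal T_\lambda$: working over the ring $\mathbb K[\lambda]/(\lambda^n)$, the map $\mathcal T_\lambda$ is invertible (its inverse being $\sum_{k=0}^{n-1}(-\lambda)^k\mathcal N^k$), commutes with $\alpha$, and intertwines $[\cdot,\ldots,\cdot]_\lambda$ with $[\cdot,\ldots,\cdot]$; so by the pullback construction of Lemma \ref{lemme} (applied over this base ring, with $f=\mathcal T_\lambda^{-1}$, or equivalently reading \eqref{Tlambda-mult} as saying $\mathcal T_\lambda$ is an isomorphism onto $(\mathfrak g,[\cdot,\ldots,\cdot],\varepsilon,\alpha)$) the quadruple $(\mathfrak g,[\cdot,\ldots,\cdot]_\lambda,\varepsilon,\alpha)$ satisfies \eqref{ColorSkewSym} and \eqref{identity}, hence is an order-$(n-1)$ deformation of $\mathfrak g$. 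Finally, since $[x_1,\ldots,x_n]_{\mathcal N}^i$ is precisely the coefficient $\omega_i$ and $\mathcal T_\lambda=\mathrm{id}+\lambda\mathcal N$ is the required algebra morphism with $\mathcal N$ its linear part, the deformation is infinitesimally trivial by definition. The main obstacle is the bookkeeping in the coefficient comparison for \eqref{Tlambda-mult}: one must carefully track the signs $\varepsilon$ coming from permuting the arguments $\mathcal Nx_{i_k}$ past one another, and confirm that the recursive definition of $[\cdot,\ldots,\cdot]_{\mathcal N}^j$ reproduces exactly the subset-sum on the right-hand side at each order; once the identity \eqref{Tlambda-mult} is in hand, everything else is formal.
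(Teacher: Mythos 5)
Your proposal is correct and follows essentially the same route as the paper: the coefficient-by-coefficient comparison showing that $\mathcal{T}_\lambda=\mathrm{id}+\lambda\mathcal{N}$ intertwines $[\cdot,\ldots,\cdot]_\lambda$ with the original bracket (with the Nijenhuis identity \eqref{Nijenhuis} supplying the top-order coefficient) is exactly the paper's ``characterization of identity \eqref{eq-trivial}'', and the transport of the $n$-Hom-Lie color structure along the invertible map $\mathcal{T}_\lambda$ is the paper's appeal to Lemma \ref{lemme}. The only difference is one of explicitness: you spell out the inversion of $\mathcal{T}_\lambda$ over $\mathbb{K}[\lambda]/(\lambda^n)$ and verify the deformation conditions directly, where the paper simply cites Theorem \ref{thm-defo} and Lemma \ref{lemme}.
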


 \begin{proof}
  Follows from the above characterization of identity \eqref{eq-trivial}, Theorem \ref{thm-defo} and Lemma \ref{lemme}. \end{proof}
    \begin{prop}
 Let $\mathcal{N}$ be a bijective Nijenhuis operator on a $n$-Hom-Lie color algebra $ (\mathfrak{g},[\cdot,\ldots,\cdot],\varepsilon,\alpha)$, $a\in \mathfrak{g}_0$ such that $\alpha(a)=a$ and $\mathcal{N}(a) \in Z(\mathfrak{g})$. Then $\mathcal{N}$ is a Nijenhuis operator on the  $(n-1)$-Hom-Lie color algebra  $(\mathfrak{g},  \{\cdot,\cdots,\cdot\},\varepsilon, \alpha)$. 
   \end{prop}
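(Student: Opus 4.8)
The plan is to verify directly that $\mathcal{N}$ satisfies the two conditions defining a Nijenhuis operator for the $(n-1)$-ary bracket
\[
\{x_1,\ldots,x_{n-1}\} := [a,x_1,\ldots,x_{n-1}],
\]
which, by the corollary asserting that $(\mathfrak{g},\{\cdot,\ldots,\cdot\},\varepsilon,\alpha)$ is an $(n-p)$-Hom-Lie color algebra (applied with $p=1$, $a_1=a$), is indeed an $(n-1)$-Hom-Lie color algebra. The commutation $\mathcal{N}\circ\alpha=\alpha\circ\mathcal{N}$ is part of the hypothesis that $\mathcal{N}$ is a Nijenhuis operator on $(\mathfrak{g},[\cdot,\ldots,\cdot],\varepsilon,\alpha)$, so nothing new is needed there. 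It remains to establish identity \eqref{Nijenhuis1} for $\{\cdot,\ldots,\cdot\}$, namely that for all homogeneous $x_1,\ldots,x_{n-1}$,
\[
\sum_{j=0}^{n-1}(-1)^{(n-1)-j}\,\mathcal{N}^{(n-1)-j}\Big(\sum_{i_1<\cdots<i_j}\{\ldots,\mathcal{N}x_{i_1},\ldots,\mathcal{N}x_{i_j},\ldots\}\Big)=0.
\]

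First I would write identity \eqref{Nijenhuis1} for the original $n$-ary bracket, evaluated on the $n$ homogeneous elements $(a,x_1,\ldots,x_{n-1})$, with $a$ occupying the first slot. Expanding the inner sums, I would split each one according to whether or not the $a$-slot is among the $j$ slots to which $\mathcal{N}$ is applied. Since $\mathcal{N}$ has degree zero, $\mathcal{N}(a)\in\mathfrak{g}_0$, and since $\mathcal{N}(a)\in Z(\mathfrak{g})$, every summand in which $\mathcal{N}$ hits the $a$-slot contains a factor $[\mathcal{N}(a),\,\cdot\,,\ldots,\,\cdot\,]=0$; all such terms disappear (this also disposes of the $j=n$ term, whose index set must contain the $a$-slot). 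Because $a\in\mathfrak{g}_0$, moving past $a$ produces no $\varepsilon$-factor via \eqref{ColorSkewSym}, so the surviving brackets $[a,x_1^{I},\ldots,x_{n-1}^{I}]$ — where $x_k^{I}=\mathcal{N}x_k$ for $k$ in the chosen set $I\subseteq\{1,\ldots,n-1\}$ and $x_k^{I}=x_k$ otherwise — are literally equal to $\{x_1^{I},\ldots,x_{n-1}^{I}\}$.

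After this reduction, the $n$-ary identity reads
\[
\sum_{j=0}^{n-1}(-1)^{n-j}\,\mathcal{N}^{n-j}\Big(\sum_{\substack{I\subseteq\{1,\ldots,n-1\}\\ |I|=j}}\{x_1^{I},\ldots,x_{n-1}^{I}\}\Big)=0.
\]
Using $(-1)^{n-j}\mathcal{N}^{n-j}=-\,\mathcal{N}\circ\big((-1)^{(n-1)-j}\mathcal{N}^{(n-1)-j}\big)$, the left-hand side is $-\mathcal{N}$ applied to the $(n-1)$-ary Nijenhuis expression displayed in the first paragraph; since $\mathcal{N}$ is bijective, that expression must vanish, which is precisely identity \eqref{Nijenhuis1} for $\{\cdot,\ldots,\cdot\}$. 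Therefore $\mathcal{N}$ is a Nijenhuis operator on $(\mathfrak{g},\{\cdot,\ldots,\cdot\},\varepsilon,\alpha)$.

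I expect the only real difficulty to be bookkeeping rather than conceptual: matching the combinatorial index sets in \eqref{Nijenhuis1} for the $n$-ary and $(n-1)$-ary brackets, and confirming that inserting the fixed degree-zero element $a$ into a bracket leaves all $\varepsilon$-signs untouched. Each hypothesis is used exactly once: $\alpha(a)=a$ and $a\in\mathfrak{g}_0$ to invoke the construction making $\{\cdot,\ldots,\cdot\}$ an $(n-1)$-Hom-Lie color algebra and to avoid sign corrections, $\mathcal{N}(a)\in Z(\mathfrak{g})$ to annihilate the terms where $\mathcal{N}$ acts on the $a$-slot, and the bijectivity of $\mathcal{N}$ to cancel the outer factor $\mathcal{N}$.
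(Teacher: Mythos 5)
Your proof is correct. Note that the paper states this proposition without giving any proof, so there is nothing to compare against; your argument — evaluating the $n$-ary Nijenhuis identity \eqref{Nijenhuis1} on the arguments $(a,x_1,\ldots,x_{n-1})$, observing that every term in which $\mathcal{N}$ hits the $a$-slot (including the top term $j=n$) dies because $\mathcal{N}(a)\in Z(\mathfrak{g})$, recognizing the survivors as the $(n-1)$-ary sums for $\{\cdot,\ldots,\cdot\}=[a,\cdot,\ldots,\cdot]$, and then stripping the outer factor $-\mathcal{N}$ by injectivity — is exactly the natural argument, and your bookkeeping (index sets $I\subseteq\{1,\ldots,n-1\}$, no $\varepsilon$-corrections since $a$ stays in its slot and has degree zero, $\mathcal{N}\circ\alpha=\alpha\circ\mathcal{N}$ inherited for free) is accurate and complete.
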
  Recall that, if $(\mathfrak{g},[\cdot,\cdot],\varepsilon,\alpha)$
be a Hom-Lie color algebra. The Nijenhuis operator condition is written as
\begin{align*}
    [\mathcal{N}x,\mathcal{N}y]= \mathcal{N}[\mathcal{N}x,y]+ \mathcal{N}[x,\mathcal{N}y]-\mathcal{N}^2[x,y].
\end{align*}
\begin{cor}
 Let $\mathcal{N}$ be a Nijenhuis operator on $3$-Hom-Lie color algebra $ (\mathfrak{g},[\cdot,\cdot,\cdot],\varepsilon,\alpha)$. if $\mathcal{N}$ is  a bijection, then it is a Nijenhuis operator on the  Hom-Lie color algebra  $(\mathfrak{g},  \{\cdot,\cdot\},\varepsilon, \alpha)$ such that $\mathcal{N}(a) \in Z(\mathfrak{g})$.
   \end{cor}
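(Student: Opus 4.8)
The statement is exactly the case $n=3$ of the preceding Proposition, so the plan is to specialize that argument and track precisely which hypotheses are used. First I would fix $a\in\mathfrak{g}_0$ with $\alpha(a)=a$ and $\mathcal{N}(a)\in Z(\mathfrak{g})$, and set $\{x,y\}:=[a,x,y]$. By the reduction Corollary of Section~\ref{constructions} (applied with $p=1$), $(\mathfrak{g},\{\cdot,\cdot\},\varepsilon,\alpha)$ is a $2$-Hom-Lie color algebra, that is, a Hom-Lie color algebra; I would take this as established. Since the twist map of $(\mathfrak{g},\{\cdot,\cdot\},\varepsilon,\alpha)$ is the same $\alpha$, the compatibility $\mathcal{N}\circ\alpha=\alpha\circ\mathcal{N}$ is inherited verbatim from the hypothesis that $\mathcal{N}$ is a Nijenhuis operator on $(\mathfrak{g},[\cdot,\cdot,\cdot],\varepsilon,\alpha)$, so only the binary Nijenhuis identity recalled just before the statement remains to be verified.

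To verify it, I would evaluate the ternary Nijenhuis identity \eqref{Nijenhuis1} (with $n=3$) on the triple $(a,x,y)$, in which $a$ always occupies the first slot. Expanding, each of the monomials that appear carries either $a$ or $\mathcal{N}(a)$ in that first slot; by the definition of the center $Z(\mathfrak{g})$ every monomial in which $\mathcal{N}(a)$ appears as an entry vanishes, so the identity collapses to
\[
0=\mathcal{N}[a,\mathcal{N}x,\mathcal{N}y]-\mathcal{N}^2[a,\mathcal{N}x,y]-\mathcal{N}^2[a,x,\mathcal{N}y]+\mathcal{N}^3[a,x,y].
\]
Applying $\mathcal{N}^{-1}$ — this is the only place the bijectivity of $\mathcal{N}$ is used — and rewriting in terms of $\{\cdot,\cdot\}$ gives
\[
\{\mathcal{N}x,\mathcal{N}y\}=\mathcal{N}\{\mathcal{N}x,y\}+\mathcal{N}\{x,\mathcal{N}y\}-\mathcal{N}^2\{x,y\},
\]
which is precisely the Nijenhuis condition \eqref{Nijenhuis} for $(\mathfrak{g},\{\cdot,\cdot\},\varepsilon,\alpha)$ in the binary case, completing the proof. (Alternatively, one may simply invoke the preceding Proposition with $n=3$; the computation above is nothing but its specialization.)

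The only genuinely delicate point is the bookkeeping in the vanishing step: one must check, monomial by monomial in the expansion of \eqref{Nijenhuis1} for $n=3$, that every term containing $\mathcal{N}(a)$ is killed by $\mathcal{N}(a)\in Z(\mathfrak{g})$, and that the four surviving terms reassemble exactly into the displayed four-term identity with the correct powers of $\mathcal{N}$. Since $a\in\mathfrak{g}_0$, the bicharacter $\varepsilon$ contributes no signs whatsoever in any of these rearrangements, so the computation is sign-free; I would nonetheless write the expansion out in full so that the cancellations and the final factoring by $\mathcal{N}$ are transparent. Everything else in the argument is formal.
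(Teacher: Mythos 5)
Your proposal is correct and takes essentially the same route as the paper, which offers no separate argument for this corollary but presents it as the $n=3$ specialization of the preceding proposition; your explicit expansion of \eqref{Nijenhuis1} at $(a,x,y)$, the vanishing of every term containing $\mathcal{N}(a)\in Z(\mathfrak{g})$, and the final application of $\mathcal{N}^{-1}$ (the only use of bijectivity) correctly reproduce the binary Nijenhuis identity, with no $\varepsilon$-signs since $a\in\mathfrak{g}_0$.
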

\subsection{Product structures  on $n$-Hom-Lie color algebras}
In this subsection, we study the notion of product structure on a $n$-Hom-Lie color algebra and show that it leads to a  special decomposition of the original $n$-Hom-Lie color algebra. Moreover, we introduce
the notion of  strict product structure on a $n$-Hom-Lie color algebra and provide  example.
\begin{df}
Let $\mathcal{G}=(\mathfrak{g},[\cdot,\cdots,\cdot],\varepsilon,\alpha)$ be a $n$-Hom-Lie color algebra.

An \textbf{almost product structure} on $\mathcal{G}$ is a linear map of degree zero $\mathcal{P}  : \mathfrak{g} \to \mathfrak{g}$, $( \mathcal{P}   \neq \pm  Id_\mathfrak{g} )$, satisfying $\mathcal{P}  ^2 =Id_\mathfrak{g}$.

 An almost product structure is called \textbf{product stucture } on $\mathcal{G} $ if it is a Nijenhuis operator.
\end{df}
\begin{rem}
One can understand a product structure on $\mathcal{G} $
as a linear map of degree zero $\mathcal{P} : \mathfrak{g}\to \mathfrak{g}$ satisfying
\begin{align}
 \nonumber   &\mathcal{P}  ^2 = Id,\ \ \ \ \mathcal{P} \alpha=\alpha \mathcal{P},\\ [\mathcal{P}x_{1},\mathcal{P} x_{2},\ldots,\mathcal{P}x_{n}]  & \label{Product-Eq} =\sum_{j=0}^{n-1}(-1)^{n-j-1}\mathcal{P}^{\mu_{n-j}}(   \sum_{i_1 < i_2 \ldots <i_{j}}^n[\ldots,\mathcal{P} x_{i_1},\ldots,\mathcal{P} x_{i_{j}},\ldots])
\end{align}
for $x_1,...,x_n\in \mathfrak{g}$ and
 $\mu_k=\begin{cases}
  1~~~~ if ~~~~ k ~is ~ odd  \\
 0~~~~ if ~~~~ k ~is ~ even.
  \end{cases}$
\end{rem}
\begin{thm}\label{thm-decomposition}
Let $\mathcal{G}=(\mathfrak{g},[\cdot,\cdots,\cdot],\varepsilon,\alpha)$ be a $n$-Hom-Lie color algebra. Then $\mathcal{G}$ has a product structure if and
only if $\mathfrak{g}=\bigoplus_{\gamma\in \Gamma}\mathfrak{g}_{\gamma}$ admits a decomposition 
$
    \mathfrak{g}_\gamma= \mathfrak{g}_{\gamma_{+}} \oplus \mathfrak{g}_{\gamma_{-}},
$
where the eigenspaces $\mathfrak{g}_+=\bigoplus_{\gamma\in \Gamma}\mathfrak{g}_{\gamma_+}$ and $\mathfrak{g}_-=\bigoplus_{\gamma\in \Gamma}\mathfrak{g}_{\gamma_-}$  of $\mathfrak{g}$ associated to the eigenvalues $1$ and $-1$ respectively are subalgebras.
\end{thm}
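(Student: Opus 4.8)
The plan is to prove both implications of the equivalence. For the forward direction, assume $\mathcal{G}$ carries a product structure $\mathcal{P}$, i.e. $\mathcal{P}^2=\mathrm{Id}$, $\mathcal{P}\alpha=\alpha\mathcal{P}$, and $\mathcal{P}$ is a Nijenhuis operator satisfying \eqref{Product-Eq}. Since $\mathcal{P}^2=\mathrm{Id}$ and $\mathrm{char}\,\K=0$, the space $\mathfrak{g}$ decomposes as the direct sum of the $(\pm1)$-eigenspaces via the projectors $\tfrac12(\mathrm{Id}\pm\mathcal{P})$; because $\mathcal{P}$ has degree zero, each eigenspace is $\Gamma$-graded, giving $\mathfrak{g}_\gamma=\mathfrak{g}_{\gamma_+}\oplus\mathfrak{g}_{\gamma_-}$, and because $\mathcal{P}\alpha=\alpha\mathcal{P}$ each eigenspace is $\alpha$-invariant. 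It remains to show $\mathfrak{g}_+$ and $\mathfrak{g}_-$ are closed under the bracket. First I would take $x_1,\dots,x_n\in\mathfrak{g}_+$, so $\mathcal{P}x_i=x_i$ for all $i$. Then in \eqref{Product-Eq} every inner bracket on the right equals $[x_1,\dots,x_n]$, and the combinatorial coefficient is $\sum_{j=0}^{n-1}(-1)^{n-j-1}\binom{n}{j}\mathcal{P}^{\mu_{n-j}}$; expanding, one separates the terms where $\mathcal{P}^{\mu_{n-j}}=\mathcal{P}$ ($n-j$ odd) from those where it is $\mathrm{Id}$ ($n-j$ even). Using $\sum_{k=0}^{n}(-1)^{n-k}\binom{n}{k}=0$ split by parity of $n-k$, the identity \eqref{Product-Eq} collapses to an equation of the form $[x_1,\dots,x_n]=c_1\,[x_1,\dots,x_n]+c_2\,\mathcal{P}[x_1,\dots,x_n]$ with explicit constants forcing $\mathcal{P}[x_1,\dots,x_n]=[x_1,\dots,x_n]$, i.e. $[\mathfrak{g}_+,\dots,\mathfrak{g}_+]\subseteq\mathfrak{g}_+$. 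The same computation with $\mathcal{P}x_i=-x_i$ and tracking the extra signs gives $[\mathfrak{g}_-,\dots,\mathfrak{g}_-]\subseteq\mathfrak{g}_-$.

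For the converse, suppose $\mathfrak{g}=\mathfrak{g}_+\oplus\mathfrak{g}_-$ with $\mathfrak{g}_\pm$ graded $\alpha$-invariant subalgebras. Define $\mathcal{P}$ to be $+\mathrm{Id}$ on $\mathfrak{g}_+$ and $-\mathrm{Id}$ on $\mathfrak{g}_-$; then $\mathcal{P}$ is a degree-zero linear map, $\mathcal{P}^2=\mathrm{Id}$, and $\mathcal{P}\neq\pm\mathrm{Id}_\mathfrak{g}$ provided both summands are nonzero (the degenerate cases being excluded by the definition of almost product structure). The compatibility $\mathcal{P}\alpha=\alpha\mathcal{P}$ follows since $\alpha$ preserves each eigenspace. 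The one substantive point is to verify the Nijenhuis identity \eqref{Nijenhuis} — equivalently \eqref{Product-Eq} — for this $\mathcal{P}$. I would check it by multilinearity: it suffices to test on homogeneous arguments $x_1,\dots,x_n$ each lying in $\mathfrak{g}_+$ or $\mathfrak{g}_-$. Say $r$ of them lie in $\mathfrak{g}_+$ and $s=n-r$ in $\mathfrak{g}_-$. If $0<r<n$, then any bracket of the $x_i$'s — with or without $\mathcal{P}$ applied to some of them — is a bracket of mixed elements; since $\mathfrak{g}_+$ and $\mathfrak{g}_-$ are subalgebras but the bracket mixes summands only in a controlled way, I would argue that on such arguments both sides of \eqref{Product-Eq} reduce to the same expression after collecting terms by the parity of the number of $\mathfrak{g}_-$-arguments that remain unhit by $\mathcal{P}$. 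If $r=n$ (all in $\mathfrak{g}_+$) or $r=0$ (all in $\mathfrak{g}_-$), the identity reduces to the binomial collapse already computed in the forward direction, and holds because $[x_1,\dots,x_n]$ lies in the same eigenspace.

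The main obstacle is the bookkeeping in the converse: one must show \eqref{Product-Eq} holds identically, not merely on single-eigenspace inputs, and this requires a careful sign- and parity-tracking argument over all $2^n$ distributions of $\mathcal{P}$ among the arguments. A clean way to organize it is to note that for a homogeneous argument $x$ in the $\epsilon$-eigenspace ($\epsilon=\pm1$) one has $\mathcal{P}x=\epsilon x$, so $\mathcal{P}^{\mu_k}x=\epsilon^{\mu_k}x$, and then the entire right-hand side of \eqref{Product-Eq}, evaluated on $x_1,\dots,x_n$ with signs $\epsilon_1,\dots,\epsilon_n$, factors as $[x_1,\dots,x_n]$ times a scalar depending only on the $\epsilon_i$ and on $n$; one then checks this scalar equals $\prod_i\epsilon_i$, which is exactly the eigenvalue of $[x_1,\dots,x_n]$ forced by the subalgebra decomposition. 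This reduces the whole verification to a single scalar identity $\sum_{j=0}^{n-1}(-1)^{n-j-1}\big(\prod \text{something}\big)=\prod_i\epsilon_i$, provable by a generating-function / binomial argument. I expect this scalar identity to be the only genuinely delicate computation; the rest is formal.
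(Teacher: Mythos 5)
Your argument follows the paper's route in its skeleton: the forward direction is the same binomial collapse of \eqref{Product-Eq} on all-plus (resp.\ all-minus) inputs via the parity identity \eqref{BinN}, giving $\mathcal{P}[x_1,\ldots,x_n]=\pm[x_1,\ldots,x_n]$, and the converse uses the same operator $\mathcal{P}(x+y)=x-y$ as in \eqref{eq-product}. Where you genuinely depart from (and improve on) the paper is the converse: the paper verifies \eqref{Product-Eq} only when all arguments lie in $\mathfrak{g}_+$ or all in $\mathfrak{g}_-$, while you correctly insist that multilinearity forces a check on mixed eigenvector inputs as well. One refinement to your organizing idea: on inputs with $r$ arguments in $\mathfrak{g}_+$ and $s=n-r$ in $\mathfrak{g}_-$ the right-hand side of \eqref{Product-Eq} does \emph{not} factor as a scalar times $[x_1,\ldots,x_n]$; it collects into $c_1[x_1,\ldots,x_n]+c_2\,\mathcal{P}[x_1,\ldots,x_n]$, where, writing $e_j$ for the $j$-th elementary symmetric function of the signs $\epsilon_1,\ldots,\epsilon_n$ (so $\sum_j e_j t^j=(1+t)^r(1-t)^s$), one has $c_2=\sum_{j<n,\ n-j\ \mathrm{odd}}e_j$ and $c_1=-\sum_{j<n,\ n-j\ \mathrm{even}}e_j$. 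Evaluating the generating function at $t=1$ and $t=-1$ shows that for $0<r<n$ both parity sums over $0\le j\le n$ vanish, hence $c_2=0$ and $c_1=e_n=\prod_i\epsilon_i$, which matches the left-hand side $[\mathcal{P}x_1,\ldots,\mathcal{P}x_n]=e_n[x_1,\ldots,x_n]$ with no assumption on where the mixed bracket lands; the pure cases $r=0,n$ then reduce to \eqref{BinN} exactly as in the paper, using the subalgebra hypothesis to replace $\mathcal{P}[x_1,\ldots,x_n]$ by $\pm[x_1,\ldots,x_n]$. So the scalar identity you deferred does hold, your plan closes a case the paper leaves implicit under ``one may check similarly,'' and with that computation written out your proof is complete.
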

\begin{proof}
Let $\mathcal{P} $ be a  product structure on   $(\mathfrak{g},[\cdot,\cdots,\cdot],\varepsilon,\alpha)$. According to Eq. \eqref{Product-Eq},  for all  element $x_i \in \mathfrak{g}_{+}$  we have:
\begin{align} \nonumber
  [x_1,...,x_n]&= [\mathcal{P}x_{1},\ldots,\mathcal{P}x_{n}] \\ \nonumber & =\sum_{j=0}^{n-1}(-1)^{n-j-1}\mathcal{P}^{\mu_{n-j}}(   \sum_{i_1 < i_2 \ldots <i_{j}}  [\ldots,\mathcal{P}x_{i_1},\ldots,\mathcal{P}x_{i_2},\ldots,\mathcal{P}x_{i_{j}},\ldots])\\
  &=\sum_{j=0}^{n-1}(-1)^{n-j-1} \binom{n}{j}\mathcal{P}^{\mu_{n-j}} [x_1,...,x_n].
\end{align}
Then, we have :
\begin{align}\label{EqEven}
   [x_1,...,x_n]=  &\sum_{2j+1 < n } \binom{n}{2j+1} \mathcal{P} [x_1,...,x_n] -  \sum_{2j  < n } \binom{n}{2j}    [x_1,...,x_n],\quad \text{if}\ n\ \text{is\ even},\\\label{Eqodd}
  [x_1,...,x_n]=    &\sum_{2j  < n } \binom{n}{2j } \mathcal{P} [x_1,...,x_n] -  \sum_{2j+1  < n } \binom{n}{2j+1}     [x_1,...,x_n],\quad \text{if}\ n\ \text{is\ odd}.
\end{align}
By the binomial theorem, we have
\begin{align}\label{BinN}
    &   \sum_{2j  < n } \binom{n}{2j }  - \sum_{2j +1 < n } \binom{n}{2j+1 } = (-1)^{n+1}.
\end{align}
Apply the above condition to Eqs. \eqref{EqEven} and \eqref{Eqodd}, we get, for all $ x_i \in \mathfrak{g}_{+}$, $\mathcal{P}  [x_1,\ldots,x_n]= [x_1,\ldots,x_n]$. Let $ x \in \mathfrak{g}_{+}$, then we have $\mathcal{P}   \circ \alpha (x)= \alpha \circ \mathcal{P}   (x)= \alpha (x)$, which implies that $\alpha(x)\subseteq \mathfrak{g}_{+} $.  So $\mathfrak{g}_{+}$ is subalgebra of $\mathfrak{g}$. Similarly, we  show that $\mathfrak{g}_{-}$ is subalgebra of $\mathfrak{g}$.

Conversely, we define a linear map of degree zero $\mathcal{P}   : \mathfrak{g} \to \mathfrak{g}$
such that for all $  ~x \in \mathfrak{g}_{+} ~and ~ y \in \mathfrak{g}_{-}$ \begin{equation}\label{eq-product}
    \mathcal{P}   (x+y)= x-y .
\end{equation}
We have $\mathcal{P}^2 (x+y)= \mathcal{P}(x-y)=x+y$, then $\mathcal{P}  ^2= Id$. Since  $  ~x \in \mathfrak{g}_{+}$, then $  \alpha(x )\in \mathfrak{g}_{+}$. Thus $\mathcal{P} \circ \alpha (x) = \alpha(x) = \alpha \circ \mathcal{P} (x)$. Similarly,  $\mathcal{P} \circ \alpha (y) = \alpha \circ \mathcal{P} (y)$.\\
 If  $n$  is even, since  $\mathfrak{g}_{+}$ is a subalgebra of $\mathfrak{g}$ then for $ x_i \in \mathfrak{g}_{+}$,  we have:
\begin{eqnarray*}
&&\sum_{j=0}^{n-1}(-1)^{n-j-1}\mathcal{P}^{\mu_{n-j}}(   \sum_{i_1 < i_2 \ldots <i_{j}}  [\ldots\mathcal{P}x_{i_1},\ldots,\mathcal{P}x_{i_2},\ldots,\mathcal{P}x_{i_{j}},\ldots])\\
  &=& \sum_{2j+1 < n } \binom{n}{2j+1} \mathcal{P} [x_1,...,x_n] -  \sum_{2j  < n } \binom{n}{2j}    [x_1,...,x_n] \\
  & =  &\sum_{2j+1 < n } \binom{n}{2j+1} [x_1,...,x_n] -  \sum_{2j  < n } \binom{n}{2j}    [x_1,...,x_n] \\
  &  =& (\sum_{2j+1 < n } \binom{n}{2j+1} -  \sum_{2j  < n } \binom{n}{2j}    )[x_1,...,x_n] \\
   & \overset{\eqref{BinN}}{=}&[\mathcal{P}x_1,\mathcal{P}x_2,...,\mathcal{P}x_n].
\end{eqnarray*}
Also if $n$ is odd, we have :
\begin{eqnarray*}
&&\sum_{j=0}^{n-1}(-1)^{n-j-1}\mathcal{P}^{\mu_{n-j}}(   \sum_{i_1 < i_2 \ldots <i_{j}}  [\ldots,\mathcal{P}x_{i_1},\ldots,\mathcal{P}x_{i_2},\ldots,\mathcal{P}x_{i_{j}},\ldots])\\&=& (\sum_{2j  < n } \binom{n}{2j } -  \sum_{2j+1  < n } \binom{n}{2j+1}    )[x_1,...,x_n] \\
    &\overset{\eqref{BinN}}{=}&[\mathcal{P}x_1,\mathcal{P}x_2,...,\mathcal{P}x_n].
\end{eqnarray*}
  One may check similarly  for all  $ x_i \in  \mathfrak{g}_{-}$. Then $\mathcal{P}  $ is a product structure on $\mathcal{G}$.
\end{proof}
Let $\mathcal{G}=(\mathfrak{g},[\cdot,\cdots,\cdot],\varepsilon,\alpha)$ be a $n$-Hom-Lie color algebra  and $\Theta : \mathfrak{g} \to \mathfrak{g}$ be a linear map of degree $\gamma$. Then $\Theta$ is said in the  \textbf{centroid} of $\mathcal{G}$ if  for all homogeneous elements $x_i \in \mathfrak{g}$, $\Theta\circ\alpha=\alpha\circ\Theta$ and  \begin{align} \label{centroid}
    \Theta [x_1,x_2,\ldots,x_n] = [\Theta x_1,x_2,\ldots,x_n].
\end{align}
The above identity is equivalent to
\begin{align} \label{centroid1}
    \Theta [x_1,x_2\ldots,x_n] = \varepsilon(\gamma,X_{i})[x_1, \ldots, \underbrace{\Theta x_i}_{i\text{-th place}}, \ldots, x_n].
\end{align}
\begin{df}
An almost product structure $\mathcal{P}  $  on  $\mathcal{G}$ is called   a \textbf{strict product structure} if it is an element  of the centroid.\end{df}

\begin{lem} \label{Lemme-strict}
 Let $\mathcal{P}$ be a strict product structure on $\mathcal{G}$, then $\mathcal{P}$ is a product structure on  $\mathcal{G}$ such that $[\mathfrak{g}_{+}, \ldots, \mathfrak{g}_{+}, \underbrace{\mathfrak{g}_{+}}_{i\text{-th place}}, \mathfrak{g}_{-}, \ldots, \mathfrak{g}_{-}] =0 $,  for all $ 1\leq i \leq n-1$.
\end{lem}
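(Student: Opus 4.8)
The plan is to argue in two stages: first show that a strict product structure $\mathcal{P}$ really is a product structure (so that Theorem \ref{thm-decomposition} applies and yields the decomposition $\mathfrak{g}=\mathfrak{g}_{+}\oplus\mathfrak{g}_{-}$ into subalgebras), and then use the centroid relation slot by slot to force the mixed brackets to vanish.

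For the first stage, recall that an almost product structure already satisfies $\mathcal{P}^{2}=Id$ and $\mathcal{P}\alpha=\alpha\mathcal{P}$, so only Eq. \eqref{Product-Eq} remains to be checked. Since $\mathcal{P}$ has degree zero, the bicharacter factor $\varepsilon(\gamma,X_{i})$ in \eqref{centroid1} equals $1$, hence the centroid identity may be applied at every slot: $\mathcal{P}[x_{1},\dots,x_{n}]=[x_{1},\dots,\mathcal{P}x_{k},\dots,x_{n}]$ for each $k$. Iterating, for any subset $\{i_{1}<\dots<i_{j}\}$ one gets $[\dots,\mathcal{P}x_{i_{1}},\dots,\mathcal{P}x_{i_{j}},\dots]=\mathcal{P}^{j}[x_{1},\dots,x_{n}]$, so the inner sum over $j$-subsets on the right-hand side of \eqref{Product-Eq} collapses to $\binom{n}{j}\mathcal{P}^{j}[x_{1},\dots,x_{n}]$. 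Using $\mu_{n-j}+j\equiv n\pmod 2$ together with $\mathcal{P}^{2}=Id$, every summand becomes $(-1)^{n-j-1}\binom{n}{j}\mathcal{P}^{\mu_{n}}[x_{1},\dots,x_{n}]$, and the identity $\sum_{j=0}^{n-1}(-1)^{n-j-1}\binom{n}{j}=1$ (which is \eqref{BinN} after rearrangement, exactly the computation used in the proof of Theorem \ref{thm-decomposition}) reduces the whole right-hand side to $\mathcal{P}^{\mu_{n}}[x_{1},\dots,x_{n}]=[\mathcal{P}x_{1},\dots,\mathcal{P}x_{n}]$, which is the left-hand side. Thus $\mathcal{P}$ is a Nijenhuis operator, hence a product structure, and Theorem \ref{thm-decomposition} gives $\mathfrak{g}=\mathfrak{g}_{+}\oplus\mathfrak{g}_{-}$ with $\mathfrak{g}_{\pm}$ subalgebras.

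For the second stage, fix $i$ with $1\le i\le n-1$ and take homogeneous elements $x_{1},\dots,x_{i}\in\mathfrak{g}_{+}$ and $x_{i+1},\dots,x_{n}\in\mathfrak{g}_{-}$. Applying the centroid relation at the first slot gives $\mathcal{P}[x_{1},\dots,x_{n}]=[\mathcal{P}x_{1},x_{2},\dots,x_{n}]=[x_{1},\dots,x_{n}]$, since $\mathcal{P}x_{1}=x_{1}$; applying it instead at the $(i+1)$-th slot gives $\mathcal{P}[x_{1},\dots,x_{n}]=[x_{1},\dots,x_{i},\mathcal{P}x_{i+1},x_{i+2},\dots,x_{n}]=-[x_{1},\dots,x_{n}]$, since $\mathcal{P}x_{i+1}=-x_{i+1}$. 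Comparing the two expressions yields $2[x_{1},\dots,x_{n}]=0$, and because $\mathrm{char}\,\K=0$ this forces $[x_{1},\dots,x_{n}]=0$. By $\varepsilon$-skew-symmetry of the bracket the same holds for any arrangement of $i$ arguments from $\mathfrak{g}_{+}$ and $n-i$ from $\mathfrak{g}_{-}$, which is precisely the asserted identity $[\mathfrak{g}_{+},\dots,\mathfrak{g}_{+},\mathfrak{g}_{-},\dots,\mathfrak{g}_{-}]=0$.

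I do not anticipate a genuine obstacle: the whole argument is formal. The only points demanding care are the combinatorial bookkeeping in collapsing \eqref{Product-Eq} (tracking the parity $\mu_{n-j}$ against $\mathcal{P}^{2}=Id$ and invoking \eqref{BinN}), the remark that the degree-zero hypothesis trivializes the bicharacter in \eqref{centroid1} so that $\mathcal{P}$ may be slid into any slot, and the use of $\mathrm{char}\,\K=0$ in the final cancellation $2[x_{1},\dots,x_{n}]=0$.
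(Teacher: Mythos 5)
Your proposal is correct and follows essentially the same route as the paper: use the degree-zero centroid property to slide $\mathcal{P}$ out of every slot, collapse the sums in \eqref{Product-Eq} via $\mathcal{P}^{2}=Id$ and the alternating binomial identity \eqref{BinN} to verify the Nijenhuis condition, and then compare the centroid relation applied at a $\mathfrak{g}_{+}$-slot versus a $\mathfrak{g}_{-}$-slot to force the mixed brackets to vanish in characteristic zero. The only cosmetic difference is that you keep \eqref{Product-Eq} in its original form and recover its left-hand side, while the paper works with the equivalent ``sum over $j=0,\dots,n$ equals zero'' formulation \eqref{Nijenhuis1}.
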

\begin{proof} The identity \eqref{Product-Eq} is equivalent to:
\begin{align*} \label{Product-Eq1}
    &\sum_{j=0}^{n}(-1)^{n-j}\mathcal{P}^{\mu_{n-j}}\bigg(   \sum_{i_1 < i_2 \ldots <i_{j}}^n  [\ldots\mathcal{P}x_{i_1},\ldots,\mathcal{P}x_{i_2},\ldots\mathcal{P}x_{i_{j}},\ldots]\bigg)=0.
\end{align*}
Then, if  $\mathcal{P}$ is a strict product structure on $\mathcal{G}$ and $x_i \in \mathfrak{g}$, we have:
\begin{align*}
&\sum_{j=0}^{n}(-1)^{n-j}\mathcal{P}^{\mu_{n-j}}\bigg(   \sum_{i_1 < i_2 \ldots <i_{j}}^n  [\ldots\mathcal{P}x_{i_1},\ldots,\mathcal{P}x_{i_2},\ldots\mathcal{P}x_{i_{j}},\ldots]\bigg)\\
=&\sum_{j=0}^{n}(-1)^{n-j}\bigg(   \sum_{i_1 < i_2 \ldots <i_{j}}^n  \mathcal{P}^{\mu_{n-j}} \mathcal{P}^{\mu_{j}} [x_{1},\ldots,x_{n}]\bigg)\\
=&\bigg(\sum_{j=0}^{n}(-1)^{n-j}\binom{n}{j} \bigg)  \mathcal{P}^{\mu_{n}}  [x_{1},\ldots,x_{n}]\\
=&0.
\end{align*}
 Thus $\mathcal{P}$ is  a product structure.\\
 Fix $i$ such that  $0<i<n$ and let $(k,l)$ such that  $0<k\leq i <l\leq n$ with  $x_k\in  \mathfrak{g}_{+}$ and $x_l \in  \mathfrak{g}_{-}$. According to Eq. \eqref{centroid}, we get
 \begin{align*}
     &\mathcal{P}[x_{1},\ldots,x_{i},x_{i+1},\ldots,x_{n}] = [\mathcal{P}x_{1},\ldots,x_{i},\ldots,x_{i}] = [x_{1},\ldots,x_{i},\ldots,x_{i}].
 \end{align*}
 On the other hand, by Eq. \eqref{centroid1} we have
  \begin{align*}
     \mathcal{P}[x_{1},\ldots,x_{i},x_{i+1},\ldots,x_{i}]
     =&  [x_{1},\ldots,x_{i},\mathcal{P}x_{i+1},\ldots,x_{i}]\\
     =& - [x_{1},\ldots,x_{i},x_{i+1},\ldots,x_{i}].
 \end{align*}
 Then, we obtain
$$[\mathfrak{g}_{+}, \ldots, \mathfrak{g}_{+}, \underbrace{\mathfrak{g}_{+}}_{i\text{-th place}} \mathfrak{g}_{-}, \ldots, \mathfrak{g}_{-}] =0. $$
\end{proof}
\begin{prop}
Let $\mathcal{G}=(\mathfrak{g},[\cdot,\cdots,\cdot],\varepsilon,\alpha)$ be a $n$-Hom-Lie color algebra. Then $\mathcal{G}$ has a strict product structure, if and
only if, $\mathfrak{g}=\bigoplus_{\gamma\in \Gamma}\mathfrak{g}_{\gamma}$, admits a decomposition 
$
    \mathfrak{g}= \mathfrak{g}_{ +} \oplus \mathfrak{g}_{-},
$
where $\mathfrak{g}_{+}$ and $\mathfrak{g}_{ -} $ are graded subalgebras of $\mathfrak{g}$ such that $[\mathfrak{g}_{ +}, \ldots, \mathfrak{g}_{ +}, \underbrace{\mathfrak{g}_{ +}}_{i\text{-th place}}, \mathfrak{g}_{ -}, \ldots, \mathfrak{g}_{ -}] =0 $, $\forall~ 1\leq i \leq n-1$.
\end{prop}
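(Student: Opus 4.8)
The plan is to obtain this Proposition as a straightforward combination of Theorem \ref{thm-decomposition} and Lemma \ref{Lemme-strict}, adding only a short verification in the converse direction.

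First I would dispose of the direct implication. If $\mathcal{P}$ is a strict product structure on $\mathcal{G}$, then Lemma \ref{Lemme-strict} already tells us that $\mathcal{P}$ is in particular a product structure, so Theorem \ref{thm-decomposition} applies and produces the decomposition $\mathfrak{g}_\gamma=\mathfrak{g}_{\gamma_+}\oplus\mathfrak{g}_{\gamma_-}$ with $\mathfrak{g}_\pm=\bigoplus_{\gamma\in\Gamma}\mathfrak{g}_{\gamma_\pm}$ graded subalgebras, namely the $\pm 1$ eigenspaces of $\mathcal{P}$. Moreover the identity $[\mathfrak{g}_+,\ldots,\mathfrak{g}_+,\underbrace{\mathfrak{g}_+}_{i\text{-th place}},\mathfrak{g}_-,\ldots,\mathfrak{g}_-]=0$ for $1\le i\le n-1$ is precisely the last assertion of Lemma \ref{Lemme-strict}, so nothing further is needed here.

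For the converse I would start from a decomposition $\mathfrak{g}=\mathfrak{g}_+\oplus\mathfrak{g}_-$ into graded subalgebras with the stated vanishing property and define $\mathcal{P}\colon\mathfrak{g}\to\mathfrak{g}$ by $\mathcal{P}(x+y)=x-y$ for $x\in\mathfrak{g}_+$ and $y\in\mathfrak{g}_-$, exactly as in the proof of Theorem \ref{thm-decomposition}. As there, $\mathcal{P}$ is a degree-zero linear map with $\mathcal{P}^2=Id_{\mathfrak{g}}$, and since $\alpha$ preserves $\mathfrak{g}_+$ and $\mathfrak{g}_-$ (being color subalgebras) it commutes with $\mathcal{P}$; provided the decomposition is nontrivial we also have $\mathcal{P}\neq\pm Id_{\mathfrak{g}}$, so $\mathcal{P}$ is an almost product structure. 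The point to check is then that $\mathcal{P}$ lies in the centroid, i.e. that $\mathcal{P}[x_1,\ldots,x_n]=[\mathcal{P}x_1,x_2,\ldots,x_n]$ for all homogeneous $x_i$, which by Lemma \ref{Lemme-strict} immediately yields that $\mathcal{P}$ is a strict product structure. To see this, write $x_i=x_i^++x_i^-$ with $x_i^\pm\in\mathfrak{g}_\pm$, expand $[x_1,\ldots,x_n]$ by multilinearity into $2^n$ terms, and observe that by $\varepsilon$-skew-symmetry \eqref{ColorSkewSym} every term having at least one entry in $\mathfrak{g}_+$ and at least one in $\mathfrak{g}_-$ can be reordered to the shape occurring in the vanishing hypothesis and hence is zero; what survives is $[x_1,\ldots,x_n]=[x_1^+,\ldots,x_n^+]+[x_1^-,\ldots,x_n^-]$, the two summands lying in $\mathfrak{g}_+$ and $\mathfrak{g}_-$ respectively because $\mathfrak{g}_\pm$ are subalgebras. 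Applying $\mathcal{P}$ gives $[x_1^+,\ldots,x_n^+]-[x_1^-,\ldots,x_n^-]$, while the same expansion applied to $[\mathcal{P}x_1,x_2,\ldots,x_n]$ yields the identical expression, so \eqref{centroid} holds; similarly one checks the version of \eqref{centroid} that moves $\mathcal{P}$ to an arbitrary slot, or invokes \eqref{centroid1}.

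The only computational ingredient is this centroid check, and its whole content is that the mixed-bracket vanishing hypothesis together with $\varepsilon$-skew-symmetry collapses the $2^n$-term expansion of $[x_1,\ldots,x_n]$ to the all-$\mathfrak{g}_+$ and all-$\mathfrak{g}_-$ summands; after that, comparing the two sides of \eqref{centroid} is a single line. Since everything else is a direct appeal to Theorem \ref{thm-decomposition} and Lemma \ref{Lemme-strict}, I do not anticipate any real difficulty: the Proposition is essentially a repackaging of those two results.
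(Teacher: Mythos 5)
Your proposal is correct and follows essentially the same route as the paper: the direct implication via Lemma \ref{Lemme-strict} together with Theorem \ref{thm-decomposition}, and the converse by defining $\mathcal{P}(x+y)=x-y$, collapsing the multilinear expansion of the bracket to the all-plus and all-minus terms using the mixed-bracket vanishing (with $\varepsilon$-skew-symmetry), and reading off the centroid identity. Your version merely spells out the collapse of the $2^n$-term expansion that the paper's computation uses implicitly, so no substantive difference.
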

\begin{proof} The first implication is a direct computation from  Lemma \ref{Lemme-strict}. Conversely, using Theorem \ref{thm-decomposition}, the map $\mathcal{P} $  defined  in Eq.\eqref{eq-product} is an almost structure and, for all $x_{k} = x_{k}^{+} +  x_{k}^{-} \in \mathfrak{g}$, where $x_k^+\in \mathfrak{g}_+ $ and $x_k^-\in \mathfrak{g}_- $. We have
\begin{align*}
    \mathcal{P}[x_1,x_2,\ldots,x_n] &= \mathcal{P}[x_{1}^{+} +  x_{1}^{-},x_{2}^{+} +  x_{2}^{-},\ldots,x_{n}^{+} +  x_{n}^{-}] \\
    &= \mathcal{P}[x_{1}^{+},x_{2}^{+},\ldots,x_{n}^{+} ]  + \mathcal{P}[x_{1}^{-},x_{2}^{-},\ldots,x_{n}^{-} ] \\
     &= [x_{1}^{+},x_{2}^{+},\ldots,x_{n}^{+} ] - [x_{1}^{-},x_{2}^{-},\ldots,x_{n}^{-} ] \\
      & =[\mathcal{P}x_{1}^{+},x_{2}^{+},\ldots,x_{n}^{+} ]  +[ \mathcal{P}x_{1}^{-},x_{2}^{-},\ldots,x_{n}^{-} ] \\
       & =[\mathcal{P}x_{1},x_{2},\ldots,x_{n} ].
\end{align*}
Then $\mathcal{P}$ is a strict product structure on  $\mathcal{G}$.
\end{proof}

\begin{exa}
Let $\mathfrak{g}_\alpha=(\mathfrak{g}, [\cdot, \cdot, \cdot, \cdot]_\alpha, \varepsilon, \alpha)$ be the  $4$-Hom-Lie color algebra defined in  Example \ref{Exple}. Define a linear map of degree zero $\mathcal{P}: \mathfrak{g} \to \mathfrak{g}$ by $$\mathcal{P}(e_1) = e_1,\quad  \mathcal{P}(e_2) = e_2, \quad \mathcal{P}(e_3) = -e_3,\quad  \mathcal{P}(e_4) = -e_4 \quad  \text{and} \quad  \mathcal{P}(e_5) = -e_5. $$
It easy to prove that $\mathcal{P}$ is a strict product structure, therefore it  is a product structure.
Using Theorem \ref{thm-decomposition}, we deduce that the  graded subalgebras $\mathfrak{g}_+$  and $\mathfrak{g}_-$ are generated by $<e_1, e_2>$ and $<e_3, e_4,e_5>$ respectively. Thus
$$\mathfrak{g}_+=\mathfrak{g}_{(0, 0)}\ \ \text{and}\ \ \mathfrak{g}_-= \mathfrak{g}_{(0, 1)}\oplus \mathfrak{g}_{(1, 0)}\oplus \mathfrak{g}_{(1, 1)}.$$

\end{exa}

\end{document}